\documentclass[reqno,11pt]{amsart}
\usepackage{amssymb}
\usepackage{amsmath}
\usepackage{amsthm}
\usepackage[usenames]{color}
\usepackage{graphicx}

\usepackage{cite}
\usepackage{bbm}
\allowdisplaybreaks[4]
\usepackage{verbatim}

\usepackage[colorlinks,linkcolor=black,anchorcolor=black,citecolor=black]{hyperref}
\usepackage[margin=1.1in]{geometry} 
\usepackage{hyperref}
\usepackage{marginnote}
\usepackage{color}

\newtheorem{thm}{Theorem}[section]

\newtheorem{lem}[thm]{Lemma}
\newtheorem{prop}[thm]{Proposition}
\newtheorem{rem}{Remark}[section]

\numberwithin{equation}{section}

\newcommand{\vertiii}[1]{{\left\vert\kern-0.25ex\left\vert\kern-0.25ex\left\vert #1
		\right\vert\kern-0.25ex\right\vert\kern-0.25ex\right\vert}}

\makeatletter
\@namedef{subjclassname@2020}{%
	\textup{2020} Mathematics Subject Classification}
\makeatother

\begin{document}

\title[The compressible Navier--Stokes--Euler system] 
{Low Mach number limit of a two-phase flow model in $\mathbb{R}^3$}

\author[F. Li]{Fucai Li}
\address[FCL]{School of Mathematics, Nanjing University, 
Nanjing 210093, P. R. China}
\email{fli@nju.edu.cn}

\author[J. Ni] {Jinkai Ni}
\address[JKN]{Department of Applied Mathematics, The Hong Kong Polytechnic University, Hong Kong}
\email{jinkaini123@gmail.com}

\author[Z. Zhang]{Zhipeng Zhang}   
\address[ZPZ]{School of Mathematical Sciences, 
Ocean University of China, Qingdao 266100, P. R. China}
\email{zhangzp@ouc.edu.cn}

\author[Z. Zhang]{Zhu Zhang}
\address[ZZ]{Department of Applied Mathematics, The Hong Kong Polytechnic University, Hong Kong}
\email{zhuama.zhang@polyu.edu.hk}

\begin{abstract}
We study the simultaneous low Mach number limit of a compressible
Navier--Stokes--Euler two-phase system in $\mathbb R^3$, in which the
pressure terms in both phases are scaled by $\varepsilon^{-2}$. For
sufficiently small $H^3$-perturbations around the constant equilibrium, we
establish the global well-posedness of the scaled compressible system and derive global
energy-dissipation estimates that are uniform with respect to the Mach
number $\varepsilon$. A key feature of the analysis is the degenerate dissipation structure: viscosity acts only on 
the Navier--Stokes phase, while the drag coupling
transfers dissipation to the inviscid
Euler phase through the relaxation mode. We then prove the global well-posedness and large-time decay
of the limiting incompressible two-phase system, and show that the relative velocity $u-\omega$ decays faster than the full
velocity pair. Finally, for the  well-prepared initial data, we introduce
pressure-corrected acoustic variables to remove the singular pressure mismatch and establish a global-in-time $H^2$-error.
As a result, the scaled compressible solutions converge uniformly in time to the corresponding solution of the limiting incompressible system.
\end{abstract}

\date{\today}
	
\subjclass[2020]{35Q30, 35Q31, 35B25, 35B40}

\keywords{Compressible Navier--Stokes--Euler system; Uniform-in-$\varepsilon$ global estimates; Low Mach number limit; Time-decay rates; Convergence rate. % Two-phase flow; Navier-Stokes equations; Euler equations;  time-decay rates.
}
\maketitle
\thispagestyle{empty}

\section{Introduction and main results}

\subsection{Introduction}
Two-phase flow models have attracted lots of mathematical interest in recent decades because of their physical relevance and the hyperbolic-parabolic mixed structure exhibited in the governing equations. These models arises in the study of sprays \cite{BBBDLLT-irma-2005}, sedimentation \cite{BWC-zamm-2000}, combustion \cite{Wfa-1958,Wfa-1985}, diesel engines \cite{RM-1952,RM-1952-a}, and medical biosprays \cite{BBJM-esaim-2005}.
At the kinetic level, a typical description consists of a {Vlasov--Fokker--Planck-type} equation that is coupled to a compressible Navier--Stokes system through a drag force. Under strong local alignment and noise, the particle distribution relaxes towards a local Maxwellian, and the hydrodynamic limit yields a macroscopic two-phase system coupling an isothermal Euler phase with an isentropic Navier--Stokes phase. 
A rigorous justification of this model was given in the work of Choi \cite{Choi-SIMA-2016}; related rigorous hydrodynamic limits and relative entropy arguments can be found in \cite{CCK-poincare-2016,CJ-M3AS-2021,MV08}.

In this paper, we study the following compressible Navier--Stokes--Euler two-phase flow model:%around the constant equilibrium state. After normalizing the equilibrium densities, the system takes the form
\begin{equation}\label{I1}
\left\{
\begin{aligned}
&\partial_t\rho
+{\rm div} (\rho  u )=0,
\\ 
&\partial_t(\rho  u )
+{\rm div}(\rho u \otimes u )
+ \nabla P(\rho )=\mu \Delta u+(\mu+\lambda)\nabla {\rm div}\,u- {\kappa n(u-\omega)},\\
&\partial_t n+{\rm div}(n\omega)=0,\\
&\partial_{t}(n\omega)+{\rm div}(n \omega \otimes \omega )+\nabla n= \kappa n(u-\omega).
\end{aligned}
\right.
\end{equation}
Here, for any $(t,x)\in\mathbb{R}^+\times\mathbb{R}^3$, $\rho=\rho(t,x)$ and $u=u(t,x)$ denote the density and velocity of the Navier--Stokes phase, respectively, while $n=n(t,x)$ and $\omega=\omega(t,x)$ represent the density and velocity of the Euler phase, respectively.
The pressure $P(\rho)$ satisfies a $\gamma$-law, namely, $P(\rho)=A\rho^\gamma$ with $A>0$ and $\gamma\geq1$. 
$\kappa>0$ denotes the friction coefficient, and the viscosity constants $\mu$ and $\lambda$ satisfy the usual physical condition: 
\begin{align*}\mu>0,\quad 2\mu+\lambda> 0. 
\end{align*}
The two phases are coupled through a drag force proportional to the relative $u - \omega$. It transfers dissipation from the viscous Navier--Stokes phase to the inviscid Euler phase, and plays an important role in the global stability of the sytem.  See, for example, \cite{WZT-M2AS-2023,LS-SIMA-2023}.

\subsection{Related literature}
The Cauchy problem of \eqref{I1} has been studied from several prespectives. Choi \cite{Choi-SIMA-2016} established global existence for small smooth solutions in both the whole space and the periodic domain settings, and proved exponential velocity alignment in the latter.
A key observation is that coupling to the viscous phase prevents singularity  formation for
sufficiently small perturbations, even though the uncoupled Euler dynamics usually develop singularities in 
finite time.
Subsequent studies investigate the large-time behavior of solutions: Wu, Zhang, and Zou \cite{WZZ-SIMA-2020} and Tang and Zhang \cite{TZ-JMAA-2021} obtained optimal algebraic decay rates under $L^1$-type assumptions. Zhang et al. \cite{ZWXM-ZAMP-2021} proved global existence in $\mathbb{R}^3$ under the smallness of the $H^3$-norm, with no smallness restriction on higher Sobolev norms. More recently, Li and Shou \cite{LS-SIMA-2023} studied this model in the critical Besov framework for the multi-dimensional case. Related results on the global well-posedness and low Mach number limit for other two-fluid models can be found, for example,  in \cite{YCZ-SIMA-2012,EW-SIMA-2015,EWW-ARMA-2016}.

A related line of research concerns the kinetic origin of \eqref{I1}: the Vlasov--Fokker--Planck/Navier--Stokes system. This system describes
the dynamics of particles suspended in a compressible fluid under the combined effects of drag, noise, and local alignment forces.
In the regime of strong noise and local alignment, the particle distribution function converges to a local Maxwellian,which is determined by an isothermal Euler system. Choi \cite{Choi-SIMA-2016} presented a rigorous justification of this limiting process.  Subsequently, Choi and Jung \cite{CJ-M3AS-2021} rigorously proved the hydrodynamic limit in a bounded domain with the specular reflection boundary condition for the kinetic equation and the homogeneous Dirichlet boundary condition for the Navier--Stokes velocity. Their proof combines a relative entropy method with a weak-strong stability argument.
These results provide a kinetic foundation for the Navier--Stokes--Euler two-phase system as an effective macroscopic model of fluid-particle interactions.

The present paper concerns a singular limit problem, namely the low Mach number limit of the system \eqref{I1}. This is a classical problem in the analysis of compressible flows. For the compressible Euler equations, the low Mach number limit has been investigated for both the well-prepared and ill-prepared initial data; see, for example,  the works of Ebin \cite{Ebin-CPAM-1979}, Klainerman and Majda \cite{KM-CPAM-1981,KM-CPAM-1982}, Ukai \cite{Ukai-JMKU-1986}, Schochet \cite{Schochet-CMP-1986,Schochet-JDE-1994}, and M\'etivier and Schochet \cite{MS-ARMA-2001}. For viscous fluids, the low Mach number limit of the compressible Navier--Stokes equations has been investigated by Lions and Masmoudi \cite{LM-JMPA-1998}, Danchin \cite{Danchin-2002}, Alazard \cite{AlazardARMA2006}, among many others. 

For the Navier--Stokes--Euler two phase system, the limiting dynamics depend on the pressure scaling in each phase.  Hong and Jong \cite{HJ-NARWA-2025} considered a low Mach number limit only in the Navier-Stokes phase, while the Euler phase remains compressible.  By contrast, the present paper considers simultaneous low Mach number scaling in both phases, which leads to a full incompressible two-phase system. This scaling corresponds to a regime where the macroscopic flow velocities are small compared with the characteristic acoustic speed of {\it both phases}. The main mathematical difficulty is that our scaling introduces singular acoustic terms in the Euler phase, which has no intrinsic viscous dissipation. Therefore, the necessary dissipation must be recovered from the drag coupling with the Navier--Stokes phases. Taking advantage of this dissipation mechanism, we establish a global-in-time error estimates in $H^2(\mathbb{R}^3)$, and provide a quantitative justification of the low Mach number limit to an incompressible two-phase flow model, rather than a compactness argument used in the previous literature.

\subsection{Formal derivation of the limiting system}
In this paper, we study the low Mach number limit of
Navier--Stokes--Euler system \eqref{I1} to the incompressible two-phase problem.
Let \(\varepsilon\in(0,1)\) be the Mach number.  We introduce the following scaling
\begin{align}\label{An1}
\rho^\varepsilon(t,x)
=
\rho\Big(\frac{t}{\varepsilon},x\Big),
\quad
n^\varepsilon(t,x)
=
n\Big(\frac{t}{\varepsilon},x\Big),\quad
u^\varepsilon(t,x)
=
\frac1\varepsilon
u\Big(\frac{t}{\varepsilon},x\Big),
\quad
\omega^\varepsilon(t,x)
=
\frac1\varepsilon
\omega\Big(\frac{t}{\varepsilon},x\Big).
\end{align}
The viscosity and the friction coefficients are
rescaled as follows:
\begin{align}\label{An2}
\mu^\varepsilon=\frac{\mu}{\varepsilon},
\quad
\lambda^\varepsilon=\frac{\lambda}{\varepsilon},
\quad
\kappa^\varepsilon=\frac{\kappa}{\varepsilon}.
\end{align}
Substituting the ansatzes \eqref{An1} and \eqref{An2} into \eqref{I1},  we obtain the following rescaled system:
\begin{equation}\label{I2}
\left\{
\begin{aligned}
&\partial_t\rho^\varepsilon+\operatorname{div}(\rho^\varepsilon u^\varepsilon)=0,\\
&\partial_t(\rho^\varepsilon u^\varepsilon)
+\operatorname{div}(\rho^\varepsilon u^\varepsilon\otimes u^\varepsilon)
+\frac{1}{\varepsilon^2}\nabla P(\rho^\varepsilon)
=\mu\Delta u^\varepsilon+(\mu+\lambda)\nabla\operatorname{div}u^\varepsilon
+\kappa n^\varepsilon(\omega^\varepsilon-u^\varepsilon),\\
&\partial_t n^\varepsilon+\operatorname{div}(n^\varepsilon \omega^\varepsilon)=0,\\
&\partial_t(n^\varepsilon \omega^\varepsilon)
+\operatorname{div}(n^\varepsilon \omega^\varepsilon\otimes \omega^\varepsilon)
+\frac{1}{\varepsilon^2}\nabla n^\varepsilon
=\kappa n^\varepsilon(u^\varepsilon-\omega^\varepsilon).
\end{aligned}
\right.
\end{equation}
We study the Cauchy problem for  \eqref{I2}. The initial data are given by
\begin{align*}
(\rho^\varepsilon,u^\varepsilon,n^\varepsilon,\omega^\varepsilon)|_{t = 0}=(\rho_0^\varepsilon(x),u_0^\varepsilon(x),n^\varepsilon_0(x),\omega_0^\varepsilon(x))\to (\bar{\rho},0,\bar{ n},0)\quad\text{as}\quad|x|\to\infty,
\end{align*}
where $\bar \rho$ and $\bar n$  respectively denote the prescribed constant background densities of the Navier--Stokes and Euler phases. Without loss of generality, we set $\bar\rho=\bar n = 1$ throughout this paper.

Now we the introduce perturbations $(q^\varepsilon, u^\varepsilon, n^\varepsilon, r^\varepsilon)$ around the equilibrium state $(1,0,1,0)$:
\begin{align*}
(\rho^{\varepsilon},u^\varepsilon,n^\varepsilon,\omega^\varepsilon)=(1+\varepsilon q^\varepsilon, u^\varepsilon, 1+\varepsilon r^\varepsilon, \omega^\varepsilon).
\end{align*}
Then the system \eqref{I2} can be rewritten in terms of $(q^\varepsilon, u^\varepsilon, r^\varepsilon, \omega^\varepsilon)$ as follows:
\begin{equation}\label{I3}
\left\{
\begin{aligned}
&\partial_t q^\varepsilon
+u^\varepsilon\cdot\nabla q^\varepsilon
+\frac{1+\varepsilon q^\varepsilon}{\varepsilon}
\operatorname{div}u^\varepsilon=0,\\
&\partial_tu^\varepsilon+u^\varepsilon\cdot\nabla u^\varepsilon
+\frac1\varepsilon
\frac{P'(1+\varepsilon q^\varepsilon)}
     {1+\varepsilon q^\varepsilon}
\nabla q^\varepsilon
=
\frac{\mu}{1+\varepsilon q^\varepsilon}\Delta u^\varepsilon
+\frac{\mu+\lambda}{1+\varepsilon q^\varepsilon}
\nabla\operatorname{div}u^\varepsilon
+\frac{1+\varepsilon r^\varepsilon}{1+\varepsilon q^\varepsilon}
(\omega^\varepsilon-u^\varepsilon),\\
&\partial_t r^\varepsilon
+\omega^\varepsilon\cdot\nabla r^\varepsilon
+\frac{1+\varepsilon r^\varepsilon}{\varepsilon}
\operatorname{div}\omega^\varepsilon=0,\\
&\partial_t\omega^\varepsilon+\omega^\varepsilon\cdot\nabla \omega^\varepsilon
+\frac1\varepsilon
\frac{1}{1+\varepsilon r^\varepsilon}
\nabla r^\varepsilon
=u^\varepsilon-\omega^\varepsilon,
\end{aligned}
\right.
\end{equation}
supplemented with the initial data:
\begin{align}\label{I3-1}
(q^\varepsilon,u^\varepsilon,r^\varepsilon,\omega^\varepsilon)|_{t=0}= (q_0^\varepsilon ,u_0^\varepsilon,r_0^\varepsilon,\omega^\varepsilon_0)=\Big(\frac{\rho^\varepsilon_0-1}{\varepsilon},u_0^\varepsilon, \frac{n_0^\varepsilon-1}{\varepsilon},\omega^\varepsilon_0             \Big).  
\end{align}

Now we derive the low Mach number limit of \eqref{I3}. Suppose that the velocity field $(u^\varepsilon,\omega^\varepsilon)$ has a limit $(u,\omega)$ as $\varepsilon\rightarrow 0^+.$ Then
from continuity equations \eqref{I3}$_1$ and  \eqref{I3}$_3$, we obtain 
\begin{align*}
(1+\varepsilon q^\varepsilon)\operatorname{div}u^\varepsilon
=
-\partial_t(\varepsilon q^\varepsilon)
-\varepsilon u^\varepsilon\cdot\nabla q^\varepsilon,    
\end{align*}
and
\begin{align*}
(1+\varepsilon r^\varepsilon)\operatorname{div}\omega^\varepsilon
=
-\partial_t(\varepsilon r^\varepsilon)
-\varepsilon \omega^\varepsilon\cdot\nabla r^\varepsilon.    
\end{align*}
Assume that the derivatives of $q^\varepsilon,r^\varepsilon \sim O(1)$. We can pass to the limit $\varepsilon \rightarrow 0^+$, and formally obtain the following divergence free conditions for the limit $(u,\omega)$:
\begin{align}\label{G1.4}
\operatorname{div}u = 0,\quad \operatorname{div}{\omega} = 0.
\end{align}
Moreover, the pressure terms in \eqref{I3} are in the gradient form. To see this, we define two auxiliary functions $h_1$ and $h_2$ satisfying:
\begin{align*}
h_1'(s):=\frac{P'(s)}{s},
\quad
h_2'(s):=\frac1{s}.
\end{align*}
Then the pressure terms in $\eqref{I3}_2$ and $\eqref{I3}_4$ can be written into:
\begin{align*}
 \frac1\varepsilon
\frac{P'(1+\varepsilon q^\varepsilon)}
     {1+\varepsilon q^\varepsilon}
\nabla q^\varepsilon
=
\nabla\Pi_1^\varepsilon,
\quad  \frac1\varepsilon
\frac{1}{1+\varepsilon r^\varepsilon}
\nabla r^\varepsilon
=
\nabla\Pi_2^\varepsilon,
\end{align*}
where the functions $\Pi_1^\varepsilon$ and $\Pi_2^\varepsilon$ are defined by
\begin{align*}
\Pi_1^\varepsilon
:=
\frac1{\varepsilon^2}
\big(h_1(1+\varepsilon q^\varepsilon)-h_1(1)\big),\quad
\Pi_2^\varepsilon
:=
\frac1{\varepsilon^2}
\big(h_2(1+\varepsilon r^\varepsilon)-h_2(1)\big).    
\end{align*}

Let $u\in L^2(\mathbb{R}^3)$ be a vector field. We define $L^2_\sigma(\mathbb{R}^3)=\{u\in L^2(\mathbb{R}^3)|~\text{div} u=0\}$ and the Leray projection 
$$
\mathbb P:={\rm Id}+\nabla (-\Delta )^{-1}{\rm div},
$$
which is a bounded mapping from $L^2(\mathbb{R}^3)$ to  $ L^2_\sigma(\mathbb{R}^3)$. Then the pressure terms $\nabla \Pi_1^\varepsilon$ and $\nabla \Pi_2^\varepsilon$ vanish after applying the operator $\mathbb{P}$. Hence, formally taking the limit $\varepsilon \rightarrow 0^+$ in the momentum equations \eqref{I3}$_2$ and \eqref{I3}$_4$, we obtain
\begin{align}
\partial_tu+\mathbb P(u\cdot\nabla u)-\mu\Delta u
&=
\mathbb P(\omega-u),    \label{G1.6-1}\\
\partial_t\omega+\mathbb P(\omega\cdot\nabla \omega)
&=
\mathbb P(u-\omega).    \label{G1.6}
\end{align}

Combining the equations \eqref{G1.4}--\eqref{G1.6},  we derive the following limiting incompressible two-phase flow system
\begin{equation}\label{I4}
\left\{
\begin{aligned}
&\partial_tu+u\cdot\nabla u+\nabla\pi_1-\mu\Delta u=\omega-u,\\
&\operatorname{div}u=0,\\
&\partial_t\omega+\omega\cdot\nabla \omega+\nabla\pi_2=u-\omega,\\
&\operatorname{div}\omega=0,
\end{aligned}
\right.
\end{equation}
supplemented with the initial data
\begin{align}\label{I4-1}
 (u , \omega )|_{t=0}= ( u_0, \omega _0).
\end{align}
Unlike the one-phase low Mach number scaling studied in \cite{HJ-NARWA-2025}, the double scaling studied in this work yields incompressibility on both phases and gives rise to two different limiting pressures $\pi_1$ and $\pi_2$. 

\subsection{Main results}

The main purpose of this paper is to justify the low Mach number limit from the scaled compressible system \eqref{I3}--\eqref{I3-1} to the incompressible two-phase system \eqref{I4}--\eqref{I4-1}.  The main results are stated in the order: we
first establish the uniform global well-posedness for the scaled compressible system  \eqref{I3}--\eqref{I3-1}, then obtain the well-posedness of the limiting incompressible system \eqref{I4}--\eqref{I4-1}, and finally derive the error estimates in the low Mach number limit.

\begin{thm}[Uniform regularity of the scaled two-phase flow model]\label{Th1}
Let $\varepsilon\in (0,1)$  and assume that
\[
(q_0^\varepsilon,u_0^\varepsilon,r_0^\varepsilon,\omega_0^\varepsilon)
\in H^3(\mathbb R^3),
\quad
1+\varepsilon q_0^\varepsilon>0,
\quad
1+\varepsilon r_0^\varepsilon>0.
\]
There exists a constant $\delta_0>0$, independent of $\varepsilon$, such
that, if
\begin{align}\label{TG1}
\mathcal X_{0,\varepsilon}
:=
\|(q_0^\varepsilon,u_0^\varepsilon,r_0^\varepsilon,
\omega_0^\varepsilon)\|_{H^3}^2
\leq \delta_0^2,
\end{align}
then the Cauchy problem \eqref{I3}--\eqref{I3-1} admits a unique global strong
solution
\[
(q^\varepsilon,u^\varepsilon,r^\varepsilon,\omega^\varepsilon)
\in \mathcal C([0,\infty);H^3(\mathbb R^3)).
\]
Moreover, $1+\varepsilon q^\varepsilon>0$ and
$1+\varepsilon r^\varepsilon>0$, and
\begin{align}\label{TG2}
&\sup_{t\geq0}
\|(q^\varepsilon,u^\varepsilon,r^\varepsilon,
\omega^\varepsilon)(t)\|_{H^3}^2
\nonumber\\
&\quad+
\int_0^\infty
\Big(
\|u^\varepsilon-\omega^\varepsilon\|_{H^3}^2
+\|\nabla(q^\varepsilon,r^\varepsilon,\omega^\varepsilon)\|_{H^2}^2
+\|\nabla u^\varepsilon\|_{H^3}^2
\Big)(\tau)\,{\rm d}\tau
\nonumber\\
&\qquad\leq
C_0
\|(q_0^\varepsilon,u_0^\varepsilon,r_0^\varepsilon,
\omega_0^\varepsilon)\|_{H^3}^2,
\end{align}
where $C_0>0$ is independent of $\varepsilon$.
\end{thm}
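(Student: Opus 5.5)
The plan is to combine local well-posedness with a priori estimates that are uniform in $\varepsilon$, and close by a continuity argument. For fixed $\varepsilon$, local existence and uniqueness in $\mathcal C([0,T];H^3)$ follow from standard theory for symmetrizable hyperbolic--parabolic systems. It therefore suffices to show the following. If on $[0,T]$
\[
\sup_{t\le T}\|(q^\varepsilon,u^\varepsilon,r^\varepsilon,\omega^\varepsilon)\|_{H^3}\le \delta
\]
with $\delta$ small and independent of $\varepsilon$, then \eqref{TG2} holds on $[0,T]$ with a constant $C_0$ independent of $\varepsilon$ and $T$. Positivity of $1+\varepsilon q^\varepsilon$ and $1+\varepsilon r^\varepsilon$ is automatic once $\varepsilon\|q^\varepsilon\|_{L^\infty}\le C\delta<1/2$.

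\textbf{Energy estimates.} The first key point is that the singular $\varepsilon^{-1}$ terms must cancel exactly in the energy estimates. I will use the symmetrizer: multiply $\partial^\alpha$\eqref{I3}$_1$ by $\frac{P'(1+\varepsilon q^\varepsilon)}{(1+\varepsilon q^\varepsilon)^2}\partial^\alpha q^\varepsilon$ and $\partial^\alpha$\eqref{I3}$_2$ by $\partial^\alpha u^\varepsilon$, and treat the Euler pair $(r^\varepsilon,\omega^\varepsilon)$ analogously with weight $(1+\varepsilon r^\varepsilon)^{-2}$. Then the principal singular terms $\varepsilon^{-1}(\operatorname{div}\partial^\alpha u\,\partial^\alpha q+\nabla\partial^\alpha q\cdot\partial^\alpha u)$ integrate to zero. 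The leftover commutators all carry a factor $\varepsilon\cdot\varepsilon^{-1}$, so they are of the form $\nabla q\,\partial^\alpha u$, which is $O(1)$ and cubic. The derivative of the weight produces terms of the type $\partial_t(\varepsilon q)=-\operatorname{div}((1+\varepsilon q)u)$, which are also $O(1)$.

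The drag term contributes $\int(\omega-u)\cdot\partial^\alpha u+(u-\omega)\cdot\partial^\alpha\omega=-\|\partial^\alpha(u-\omega)\|^2$ up to small errors from the density factors. This yields
\[
\frac{d}{dt}\mathcal E+\mu\|\nabla u\|_{H^3}^2+\|u-\omega\|_{H^3}^2\lesssim \delta\,\mathcal D,
\]
where $\mathcal E\sim\|(q,u,r,\omega)\|_{H^3}^2$ and $\mathcal D$ is the full dissipation appearing in \eqref{TG2}. The cubic terms are bounded by $\|(q,u,r,\omega)\|_{H^3}$ times $\mathcal D$ via Moser/commutator estimates. The dangerous terms are those with no $u$-dissipation, such as $\omega\cdot\nabla\partial^\alpha\omega$ and $\omega\cdot\nabla r$ at top order. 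These are handled by integration by parts (giving $\operatorname{div}\omega$) together with $\|\nabla\omega\|_{L^\infty}\lesssim\|\nabla\omega\|_{H^2}$, which lies in $\mathcal D$.

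\textbf{Recovering dissipation of $\nabla q$, $\nabla r$, $\nabla\omega$.} This step is the main obstacle. Here I use cross terms weighted by $\varepsilon$. For $|\alpha|\le2$, consider $\frac{d}{dt}\int\varepsilon\,\partial^\alpha u\cdot\nabla\partial^\alpha q$. The singular term in \eqref{I3}$_2$ gives $\|\nabla\partial^\alpha q\|^2$ with an $O(1)$ coefficient. The time derivative of $q$ gives $-\|\operatorname{div}\partial^\alpha u\|^2$. Viscosity and drag give terms bounded by $\varepsilon(\|\nabla u\|_{H^3}+\|u-\omega\|_{H^2})\|\nabla q\|_{H^2}$, which are absorbable since $\varepsilon<1$. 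The same construction with $\int\varepsilon\,\partial^\alpha\omega\cdot\nabla\partial^\alpha r$ gives $\|\nabla r\|_{H^2}^2$ minus $\|\operatorname{div}\omega\|_{H^2}^2$, plus drag terms $\varepsilon\|u-\omega\|_{H^2}\|\nabla r\|_{H^2}$. The $\|\operatorname{div}\omega\|_{H^2}$ term is bad. Since $\operatorname{div}\omega=\operatorname{div}u-\operatorname{div}(u-\omega)$, however, it is controlled by $\|\nabla u\|_{H^2}+\|u-\omega\|_{H^3}$, which is already dissipated. Finally, $\|\nabla\omega\|_{H^2}\le\|\nabla u\|_{H^2}+\|\nabla(u-\omega)\|_{H^2}$ is dissipated for free. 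The cross terms are bounded by $\varepsilon\mathcal E\le\mathcal E$, so adding them with a small coefficient $\eta$ keeps the Lyapunov functional equivalent to $\|\cdot\|_{H^3}^2$, uniformly in $\varepsilon$.

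\textbf{Closing.} Integrating in time gives
\[
\mathcal E(t)+\int_0^t\mathcal D\le C\mathcal X_{0,\varepsilon}+C\delta\int_0^t\mathcal D.
\]
Taking $\delta$ small yields \eqref{TG2}. Choosing $\delta_0$ with $C_0\delta_0^2<\delta^2/4$ closes the bootstrap, giving global existence; uniqueness follows from the local theory. The delicate point throughout is verifying that every commutator arising from the variable coefficients multiplying $\varepsilon^{-1}\nabla q$ and $\varepsilon^{-1}\operatorname{div}u$ carries a compensating $\varepsilon$. In addition, the top-order ($|\alpha|=3$) Euler convection terms must close using only $\|\nabla\omega\|_{L^\infty}$, with no dissipation of $\nabla^4\omega$ needed.
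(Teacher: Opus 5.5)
Your proposal is correct and follows the paper's proof step by step: symmetrized $H^3$ energy estimates in which the $\varepsilon^{-1}$ terms cancel, the $\varepsilon$-weighted cross terms $\varepsilon\int(\partial^\alpha u^\varepsilon\cdot\nabla\partial^\alpha q^\varepsilon+\partial^\alpha\omega^\varepsilon\cdot\nabla\partial^\alpha r^\varepsilon)\,{\rm d}x$ for $|\alpha|\le 2$ to recover $\|\nabla(q^\varepsilon,r^\varepsilon)\|_{H^2}$, the bound $\|\nabla\omega^\varepsilon\|_{H^2}\lesssim\|\nabla u^\varepsilon\|_{H^3}+\|u^\varepsilon-\omega^\varepsilon\|_{H^3}$, and a continuity argument. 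Your weights $P'(1+\varepsilon q^\varepsilon)/(1+\varepsilon q^\varepsilon)^2$ and $(1+\varepsilon r^\varepsilon)^{-2}$ are the exact symmetrizers, and this matters: with the weight $(1+\varepsilon r^\varepsilon)^{-1}$ written in the paper's Lemma~\ref{L3.2}, the singular terms leave a remainder $-\int\frac{r^\varepsilon}{1+\varepsilon r^\varepsilon}\nabla\partial^\alpha r^\varepsilon\cdot\partial^\alpha\omega^\varepsilon\,{\rm d}x$, which at $|\alpha|=3$ needs four derivatives of $r^\varepsilon$ or $\omega^\varepsilon$ and is not controlled, whereas the remainder terms the paper actually lists (e.g.\ its $J_4$) are those produced by your choice.
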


\begin{rem}
Although the Euler phase has no
intrinsic viscosity, we can still obtain the dissipation of $\omega^\varepsilon$ through the
viscous dissipation of $u^\varepsilon$ and the relaxation of
$u^\varepsilon-\omega^\varepsilon$.
\end{rem}

\begin{thm}[Well-posedness of the limiting system]\label{Th2}
Suppose that
\[
U_0:=(u_0,\omega_0)\in H^3(\mathbb R^3),
\quad
\operatorname{div}u_0=\operatorname{div}\omega_0=0.
\]
There exists a constant $\delta_1>0$ such that, if
\begin{align}\label{TGG1}
\mathcal X_0:=\|U_0\|_{H^3}^2\leq\delta_1^2,
\end{align}
then the Cauchy problem \eqref{I4}--\eqref{I4-1} admits a unique global strong
solution
\[
U:=(u,\omega)\in\mathcal C([0,\infty);H^3(\mathbb R^3))
\]
satisfying
\begin{align}\label{TGG2}
&\sup_{t\geq0}\|U(t)\|_{H^3}^2
+
\int_0^\infty
\Big(
\|u-\omega\|_{H^3}^2
+\|\nabla u\|_{H^3}^2
+\|\nabla\omega\|_{H^2}^2
\Big)(\tau)\,{\rm d}\tau
\leq C_1\mathcal X_0,
\end{align}
where $C_1>0$ is a positive constant. If, in addition, $U_0\in L^1(\mathbb R^3)$ and
$\|U_0\|_{H^3\cap L^1}^2\leq\delta_1$, then we have the following decay estimate:
\begin{align}\label{TGG3}
\|U(t)\|_{H^3}
&\leq C(1+t)^{-\frac34}\|U_0\|_{H^3\cap L^1},\\
\label{TGG5}
\|\nabla U(t)\|_{H^2}
&\leq C(1+t)^{-\frac54}\|U_0\|_{H^3\cap L^1},\\
\label{TGG4}
\|u(t)-\omega(t)\|_{H^3}
&\leq C(1+t)^{-\frac54}\|U_0\|_{H^3\cap L^1}.
\end{align}
\end{thm}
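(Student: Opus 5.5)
We prove Theorem \ref{Th2} in three stages: local existence plus a priori estimates giving global existence, a linear decay analysis, and a bootstrap for the nonlinear decay.

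\emph{Global existence.} Local well-posedness in $H^3$ follows from a standard iteration (or Friedrichs mollification) for the coupled system: it is parabolic in $u$ and a transport equation in $\omega$, with the pressures recovered from $\pi_i=(-\Delta)^{-1}\operatorname{div}\operatorname{div}(\cdot\otimes\cdot)$. Globality then follows by continuity from an a priori estimate under $\sup_t\|U\|_{H^3}\le\delta$.

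\emph{Energy estimates.} Apply $\partial^\alpha$, $|\alpha|\le3$, to \eqref{I4}, test against $(\partial^\alpha u,\partial^\alpha\omega)$ and add. The pressures drop out by incompressibility, and the drag terms combine into $-\|\partial^\alpha(u-\omega)\|_{L^2}^2$. This gives
\[
\frac12\frac{\rm d}{{\rm d}t}\|U\|_{H^3}^2+\mu\|\nabla u\|_{H^3}^2+\|u-\omega\|_{H^3}^2\le C\|\nabla U\|_{L^\infty}\|\nabla U\|_{H^2}^2 .
\]
Here the commutator estimate is used for $\omega\cdot\nabla\omega$, and for $u\cdot\nabla u$ one may integrate by parts at top order. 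The term $\|\nabla U\|_{H^2}^2$ on the right is not yet controlled, so we must produce dissipation for $\nabla\omega$. Since $\nabla\omega=\nabla u-\nabla(u-\omega)$,
\[
\|\nabla\omega\|_{H^2}\le\|\nabla u\|_{H^2}+\|u-\omega\|_{H^3}.
\]
Hence $\|\nabla\omega\|_{H^2}^2$ is bounded by the dissipation already present. This is exactly why the dissipation for $\omega$ is only at the $H^2$ level for the gradient. With $\|\nabla U\|_{L^\infty}\lesssim\|U\|_{H^3}\le\delta$ small, the right-hand side is absorbed, which yields \eqref{TGG2} and closes the bootstrap. Uniqueness follows from an $L^2$ energy estimate on the difference of two solutions.

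\emph{Linear decay.} Write $U$ in Duhamel form around the linearized system
\[
\partial_t u-\mu\Delta u=\omega-u,\qquad \partial_t\omega=u-\omega,
\]
restricted to divergence-free fields. Its Fourier symbol is the $2\times2$ matrix
\[
\begin{pmatrix}-\mu|\xi|^2-1&1\\ 1&-1\end{pmatrix},
\]
whose eigenvalues at low frequency are
\[
\lambda_1=-\tfrac{\mu}{2}|\xi|^2+O(|\xi|^4),\qquad \lambda_2=-2+O(|\xi|^2),
\]
and at high frequency satisfy $\operatorname{Re}\lambda\le-c$. The eigenvector for $\lambda_1$ is $(1,1)+O(|\xi|^2)$, so its component in $u-\omega$ carries an extra factor $|\xi|^2$. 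Hence:
\begin{itemize}
\item $\|e^{tA}U_0\|_{L^2}\lesssim(1+t)^{-3/4}\|U_0\|_{L^1\cap L^2}$;
\item derivatives gain $(1+t)^{-1/2}$ each at low frequency, while high frequencies decay exponentially for $u$;
\item for $\omega$ at high frequency the semigroup is merely bounded times $e^{-ct}$ with no smoothing, so we keep $H^3$ regularity.
\end{itemize}
The relative velocity $u-\omega$ then decays like $(1+t)^{-3/4-1}$ in its linear part, in particular faster than $(1+t)^{-5/4}$.

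\emph{Nonlinear decay.} To avoid derivative loss at high frequency in Duhamel, we combine a Lyapunov functional with Duhamel for low frequencies only. Let
\[
\mathcal M(t)=\sup_{s\le t}\Big[(1+s)^{3/4}\|U\|_{H^3}+(1+s)^{5/4}\big(\|\nabla U\|_{H^2}+\|u-\omega\|_{H^3}\big)\Big].
\]
From the energy inequality restricted to derivatives of order $1$ to $3$, together with the $\nabla\omega$ control above, we get
\[
\frac{\rm d}{{\rm d}t}\mathcal E_1+c\,\mathcal E_1\le C\|\nabla U_{\rm low}\|_{L^2}^2,\qquad \mathcal E_1\sim\|\nabla U\|_{H^2}^2 .
\]
This inequality is of Lyapunov type (Gronwall with decay), so it does not close by absorption. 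The low frequency piece $\|\nabla U_{\rm low}\|_{L^2}$ is bounded by Duhamel. The nonlinearities are $\mathbb P\operatorname{div}(u\otimes u)$ and $\mathbb P\operatorname{div}(\omega\otimes\omega)$; estimating them in $L^1$ by $\|U\|_{L^2}^2\lesssim(1+s)^{-3/2}\mathcal M^2$ and using the divergence form for the extra derivative gives $\mathcal M\lesssim\|U_0\|_{H^3\cap L^1}+\mathcal M^2$. This yields \eqref{TGG3} and \eqref{TGG5}.

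For \eqref{TGG4}, subtract the two momentum equations:
\[
\partial_t(u-\omega)+2(u-\omega)=\mu\Delta u-\nabla(\pi_1-\pi_2)-(u\cdot\nabla u-\omega\cdot\nabla\omega).
\]
Each term on the right is $O((1+t)^{-5/4})$ in $H^2$. The top derivative of $\Delta u$ is handled in the energy form together with the time-integrated dissipation. An ODE/Gronwall argument with damping rate $2$ then gives $(1+t)^{-5/4}$ in $H^3$.

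\emph{Main obstacle.} We expect the main difficulty to be the lack of high-frequency smoothing for $\omega$. It forces all derivative-level decay to come from the energy functional and the relaxation identity $\nabla\omega=\nabla u-\nabla(u-\omega)$, rather than from the semigroup. In particular, controlling top-order $\omega\cdot\nabla\omega$ without loss must be done via commutators inside the energy method.
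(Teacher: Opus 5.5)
Your route to \eqref{TGG3}--\eqref{TGG5} is sound, but the step for \eqref{TGG4} fails as written. You claim every term on the right of the $u-\omega$ equation is $O((1+t)^{-5/4})$ in $H^2$. For $\mu\Delta u$ this needs $\|\nabla^4u(t)\|_{L^2}$ pointwise in time. An $H^3$ solution only has $\nabla u\in L^2(0,\infty;H^3)$, so there is no pointwise bound on $\nabla^4 u$, let alone a rate. Getting $u-\omega$ in $H^3$ from the damped ODE is worse: it needs the source in $H^3$, i.e.\ five derivatives of $u$.

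The ``energy form'' does not repair this. Pairing $\partial^\alpha\Delta u$ with $\partial^\alpha(u-\omega)$ at $|\alpha|=3$ and integrating by parts puts four derivatives on $\omega$, which do not exist in this class. The time-integrated dissipation gives no pointwise decay rate without time-weighted estimates you have not set up.

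The fix is to use the $u-\omega$ equation only at the $L^2$ level. For the derivatives, \eqref{TGG5} gives directly
\[
\|\nabla(u-\omega)\|_{H^2}\le\|\nabla u\|_{H^2}+\|\nabla\omega\|_{H^2}\lesssim(1+t)^{-5/4}\|U_0\|_{H^3\cap L^1}.
\]
For the $L^2$ part, test your equation with $u-\omega$:
\begin{itemize}
\item the pressure term $\nabla(\pi_1-\pi_2)$ drops out, since $\operatorname{div}(u-\omega)=0$;
\item $\|\Delta u\|_{L^2}\le\|\nabla U\|_{H^2}\lesssim(1+t)^{-5/4}$;
\item $\|u\cdot\nabla u\|_{L^2}+\|\omega\cdot\nabla\omega\|_{L^2}\lesssim(1+t)^{-2}$.
\end{itemize}
Hence $\frac{{\rm d}}{{\rm d}t}\|u-\omega\|_{L^2}^2+2\|u-\omega\|_{L^2}^2\lesssim(1+t)^{-5/2}$, which gives the rate $(1+t)^{-5/4}$.

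This matches the paper's final step, except that the paper gets the $L^2$ bound from Duhamel with the relaxation-enhanced kernel \eqref{G4.10}. Your ODE version is more elementary and does not need that spectral estimate. With this fix, putting $\|u-\omega\|_{H^3}$ into $\mathcal M$ is unnecessary.

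For \eqref{TGG3}--\eqref{TGG5} you take a genuinely different, valid route. The paper runs Duhamel over all frequencies. Because $\mathbb P$ is unbounded on $L^1$, it measures $F(U)$ in $L^q$ with $q=3/(3-2\vartheta)$, which loses $\vartheta$ in the kernel exponent (harmless, since $\frac54-\vartheta>1$). It then feeds the full $\|U\|_{L^2}^2$ and $\|\nabla U\|_{L^2}^2$ into the damped inequalities \eqref{G4.31}--\eqref{G4.32}.

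You instead use Duhamel only for $|\xi|\le1$ and absorb $\|\nabla U_{\rm high}\|_{L^2}\lesssim\|\nabla^2U\|_{L^2}$ into the dissipation. At low frequency the bound $|\widehat{\mathbb P\operatorname{div}F}(\xi)|\le|\xi|\,|\widehat F(\xi)|\le|\xi|\,\|F\|_{L^1}$ gives rates with no loss, so your version is slightly sharper and avoids the $L^q$ machinery. State that Fourier-side bound explicitly: read literally, ``estimating the nonlinearities in $L^1$'' would require $\mathbb P$ to be bounded on $L^1$, which is false.
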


\begin{rem}
The relative velocity $u-\omega$ decays faster than the velocity pair
$U=(u,\omega)$. This enhanced decay reflects the relaxation mechanism
created by the drag coupling. 
\end{rem}

\begin{thm}[Global quantitative low Mach number limit]\label{Th3}
Let $(q^\varepsilon,u^\varepsilon,r^\varepsilon,\omega^\varepsilon)$ be the
global solution of the problem \eqref{I3}--\eqref{I3-1} constructed in Theorem \ref{Th1}, and
let $(u,\omega,\pi_1,\pi_2)$ be the corresponding global solution of the problem 
\eqref{I4}--\eqref{I4-1} constructed in Theorem \ref{Th2}. Here the pressure functions $\pi_1$ and $\pi_2$ are defined by
$$
\pi_1(t,x)=\sum_{i,j=1,\cdots 3}\mathcal{R}_i\mathcal{R}_j(u_iu_j)(t,x),\quad \pi_2(x)=\sum_{i,j=1,\cdots 3}\mathcal{R}_i\mathcal{R}_j(\omega_i\omega_j)(t,x),
$$
where the operators $\mathcal{R}_k$ with $k=1,2,3$ are the Riesz transforms. Assume that the
initial data are well-prepared in the sense that
\begin{align}\label{TD1}
\|u_0^\varepsilon-u_0\|_{H^2}
+
\|q_0^\varepsilon-\varepsilon[P'(1)]^{-1}\pi_1^0\|_{H^2}
+
\|\omega_0^\varepsilon-\omega_0\|_{H^2}
+
\|r_0^\varepsilon-\varepsilon\pi_2^0\|_{H^2}
\leq\varepsilon,
\end{align}
where $\pi_j^0(x)=\pi_j(t,x)|_{t=0}$, with $j=1,2$. Then there exists a constant $C_2>0$, independent of $\varepsilon$, such that
\begin{align}\label{TD2}
&\sup_{t\geq0}
\Big(
\|u^\varepsilon-u\|_{H^2}^2
+
\|q^\varepsilon-\varepsilon[P'(1)]^{-1}\pi_1\|_{H^2}^2
+
\|\omega^\varepsilon-\omega\|_{H^2}^2
+
\|r^\varepsilon-\varepsilon\pi_2\|_{H^2}^2
\Big)(t)
\nonumber\\
&\quad+
\int_0^\infty
\Big(
\|\nabla(u^\varepsilon-u)\|_{H^2}^2
+
\|\nabla(\omega^\varepsilon-\omega)\|_{H^1}^2
+
\|\nabla(q^\varepsilon-\varepsilon[P'(1)]^{-1}\pi_1)\|_{H^1}^2
\nonumber\\
&\qquad \qquad 
+
\|\nabla(r^\varepsilon-\varepsilon\pi_2)\|_{H^1}^2
+
\|(u^\varepsilon-\omega^\varepsilon)-(u-\omega)\|_{H^2}^2
\Big)(\tau)\,{\rm d}\tau
\leq C_2\varepsilon^2.
\end{align}
Consequently, as $\varepsilon\to0$, we have the following low-Mach-number limit result:
\begin{equation}\label{convergence1}
\left\{
\begin{aligned}
(q^\varepsilon,u^\varepsilon,r^\varepsilon,\omega^\varepsilon)
&\longrightarrow(0,u,0,\omega)
&&\text{strongly in }\mathcal C([0,\infty);H^2),\\
\varepsilon^{-2}\left(\rho^\varepsilon-1\right)
&\longrightarrow [P'(1)]^{-1}\pi_1
&&\text{strongly in }
\mathcal C([0,\infty);H^2)\cap L^\infty(\mathbb R^+;L^\infty),\\
\varepsilon^{-2}\left(n^\varepsilon-1\right)
&\longrightarrow \pi_2
&&\text{strongly in }
\mathcal C([0,\infty);H^2)\cap L^\infty(\mathbb R^+;L^\infty),\\
(\mathbb{I}-\mathbb P)u^\varepsilon
&\longrightarrow0
&&\text{strongly in }\mathcal C([0,\infty);H^2),\\
(\mathbb{I}-\mathbb P)\omega^\varepsilon
&\longrightarrow0
&&\text{strongly in }\mathcal C([0,\infty);H^2).
\end{aligned}
\right.
\end{equation}
\end{thm}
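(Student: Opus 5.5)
Our plan is to derive the equations for the error variables and close a global energy estimate that mimics the uniform estimate of Theorem \ref{Th1}, with all source terms being $O(\varepsilon)$ in $L^2_tH^2$ or integrable in time. Set $c_1:=P'(1)$ and define the pressure-corrected acoustic variables
\begin{align*}
Q^\varepsilon:=q^\varepsilon-\varepsilon c_1^{-1}\pi_1,\quad R^\varepsilon:=r^\varepsilon-\varepsilon\pi_2,\quad U^\varepsilon:=u^\varepsilon-u,\quad W^\varepsilon:=\omega^\varepsilon-\omega .
\end{align*}
Subtracting \eqref{I4} from \eqref{I3}, and using that $\frac1\varepsilon\frac{P'(1+\varepsilon q^\varepsilon)}{1+\varepsilon q^\varepsilon}\nabla q^\varepsilon=\frac{c_1}{\varepsilon}\nabla q^\varepsilon+O(|q^\varepsilon||\nabla q^\varepsilon|)$, the singular pressure terms become $\frac{c_1}{\varepsilon}\nabla Q^\varepsilon+\nabla\pi_1$. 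Thus the $\nabla\pi_1$ of the limit system cancels exactly, and we obtain
\begin{align*}
\partial_tU^\varepsilon+\frac{c_1}{\varepsilon}\nabla Q^\varepsilon-\mu\Delta U^\varepsilon-(\mu+\lambda)\nabla{\rm div}\,U^\varepsilon-(W^\varepsilon-U^\varepsilon)&=F_1,\\
\partial_tQ^\varepsilon+\frac1\varepsilon{\rm div}\,U^\varepsilon&=F_2,
\end{align*}
together with the analogous pair for $(R^\varepsilon,W^\varepsilon)$ with sources $F_3,F_4$. Here ${\rm div}\,u={\rm div}\,\omega=0$ is used, and $F_2$ contains $-\varepsilon c_1^{-1}\partial_t\pi_1-u^\varepsilon\cdot\nabla q^\varepsilon-q^\varepsilon{\rm div}\,u^\varepsilon$. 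The sources consist of three kinds of terms: convective differences such as $U^\varepsilon\cdot\nabla u^\varepsilon+u\cdot\nabla U^\varepsilon$; quasilinear remainders of size $\varepsilon(|q^\varepsilon|+|r^\varepsilon|)$ times derivatives, coming from the coefficients $1/(1+\varepsilon q^\varepsilon)$ and so on; and the $O(\varepsilon)$ corrections $\varepsilon\partial_t\pi_j$ and $\varepsilon q^\varepsilon\nabla\pi_1$ type terms. The well-prepared condition \eqref{TD1} says precisely that the initial error is $O(\varepsilon)$ in $H^2$.

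The energy estimate proceeds in the following steps.
\begin{itemize}
\item \textbf{Symmetrizing the acoustic part.} Take the $H^2$ inner product of the $U^\varepsilon$ equation with $U^\varepsilon$ and of the $Q^\varepsilon$ equation with $c_1Q^\varepsilon$, and similarly for $(W^\varepsilon,R^\varepsilon)$. The $\varepsilon^{-1}$ terms cancel by integration by parts. The drag terms produce $\|U^\varepsilon-W^\varepsilon\|_{H^2}^2$, and viscosity gives $\|\nabla U^\varepsilon\|_{H^2}^2$.
\item \textbf{Recovering the remaining dissipation.} As in the proof of Theorem \ref{Th1}, we add small multiples of the cross terms $\varepsilon\langle\nabla^k U^\varepsilon,\nabla\nabla^kQ^\varepsilon\rangle$ and $\varepsilon\langle\nabla^k W^\varepsilon,\nabla\nabla^k R^\varepsilon\rangle$ for $k\le1$. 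The $\varepsilon$ weight is natural because the momentum equation multiplied by $\varepsilon$ gives $c_1\|\nabla Q^\varepsilon\|^2$. This yields $\|\nabla(Q^\varepsilon,R^\varepsilon)\|_{H^1}^2$.
\item \textbf{Dissipation of $W^\varepsilon$.} The bound on $\|\nabla W^\varepsilon\|_{H^1}^2$ then follows from $\nabla W^\varepsilon=\nabla U^\varepsilon-\nabla(U^\varepsilon-W^\varepsilon)$.
\end{itemize}
Because the system is only estimated at the $H^2$ level while Theorems \ref{Th1} and \ref{Th2} give $H^3$ control, every derivative loss in the convective and quasilinear terms is absorbed by the known bounds \eqref{TG2} and \eqref{TGG2}.

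The main obstacle is making the estimate global in time. Terms like $\langle U^\varepsilon\cdot\nabla u,U^\varepsilon\rangle$ or $\langle W^\varepsilon\cdot\nabla\omega,W^\varepsilon\rangle$ are quadratic in the error without a dissipative factor, since $W^\varepsilon$ has no $L^2$-level dissipation. We plan to bound them by $\|\nabla(u,\omega)\|_{L^\infty}\mathcal E(t)$, where $\mathcal E(t)$ denotes the error energy. Then $\int_0^\infty\|\nabla(u,\omega)\|_{H^2}\,{\rm d}\tau$ must be finite. This is not given by \eqref{TGG2}, which is only square-integrable, so we will use the decay \eqref{TGG5}, $(1+t)^{-5/4}$, assuming $U_0\in L^1$. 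Alternatively, we can treat these terms via Gagliardo--Nirenberg as $\|\nabla u\|_{L^3}\|U^\varepsilon\|_{L^6}^2\lesssim\delta\|\nabla U^\varepsilon\|^2$ for the $u$-part. For the $\omega$-part we write $W^\varepsilon=U^\varepsilon-(U^\varepsilon-W^\varepsilon)$, so that it too is controlled by dissipative quantities times the small norm $\delta_1$. We will try this smallness-absorption route first, since it avoids extra hypotheses. Terms in which a singular factor $\varepsilon^{-1}$ multiplies nonlinear quantities, such as $\frac{1}{\varepsilon}q^\varepsilon{\rm div}\,u^\varepsilon$ written in terms of $Q^\varepsilon$, are handled by the quasilinear symmetrization with weights $P'(1+\varepsilon q^\varepsilon)/(1+\varepsilon q^\varepsilon)$, exactly as in Theorem \ref{Th1}.

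The pure source terms are bounded by
\begin{align*}
\varepsilon\big(\|\partial_t\pi_j\|_{H^2}+\|\nabla\pi_j\|_{H^2}\big)\,\mathcal E^{1/2}.
\end{align*}
Here $\pi_j$ is quadratic, so $\|\partial_t\pi_1\|_{H^2}\lesssim\|u\|_{H^3}\|\partial_tu\|_{H^2}$ is $L^1_t$ by \eqref{TGG2}, giving an $O(\varepsilon^2)$ contribution after Young's inequality. Gronwall's lemma then yields \eqref{TD2}. Finally, \eqref{convergence1} follows directly:
\begin{itemize}
\item $\varepsilon^{-2}(\rho^\varepsilon-1)-c_1^{-1}\pi_1=\varepsilon^{-1}Q^\varepsilon=O(1)$ in $H^2$. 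Obtaining actual convergence here needs a slightly sharper statement, or interpolation with the decay, which we would check.
\item $(\mathbb I-\mathbb P)u^\varepsilon=(\mathbb I-\mathbb P)U^\varepsilon$, which is $O(\varepsilon)$ in $H^2$.
\end{itemize}
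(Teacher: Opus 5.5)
Your set-up matches the paper's: the same pressure-corrected unknowns, weighted symmetrization at the $H^2$ level, viscous and drag dissipation, $\varepsilon$-weighted cross terms for $\nabla(Q^\varepsilon,R^\varepsilon)$ in $H^1$, and $\nabla W^\varepsilon=\nabla U^\varepsilon-\nabla(U^\varepsilon-W^\varepsilon)$. The gap is the step you yourself call the main obstacle, which you do not close.

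\textbf{Neither of your two routes works.} The decay route needs $\int_0^\infty\|\nabla(u,\omega)\|_{L^\infty}\,{\rm d}\tau<\infty$, hence the hypothesis $U_0\in L^1$, which the theorem does not assume. The smallness-absorption route rests on $\int|U^\varepsilon|^2|\nabla u|\,{\rm d}x\lesssim\|\nabla u\|_{L^3}\|U^\varepsilon\|_{L^6}^2$, and this is false. The H\"older exponents sum to $\frac13+\frac16+\frac16=\frac23$. Equivalently, rescaling both functions by $x\mapsto x/\lambda$ multiplies the left side by $\lambda^2$ but the right side only by $\lambda$. A valid pointwise bound would need $\|\nabla u\|_{L^{3/2}}$, which $H^3$ does not give. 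In fact no inequality $\int|U^\varepsilon|^2|\nabla u|\,{\rm d}x\le C\|u\|_{H^3}\|\nabla U^\varepsilon\|_{L^2}^2$ holds, because the error has no $L^2$-level (low-frequency) dissipation. Writing $W^\varepsilon=U^\varepsilon-(U^\varepsilon-W^\varepsilon)$ does not change this.

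\textbf{The missing idea.} Square-integrability in time of the background gradients, which \eqref{TG2} and \eqref{TGG2} provide, is enough. This requires the functional to contain both the supremum of the error energy and its time-integrated dissipation, as the paper's $\tilde{\mathcal X}(t)$ does. Put a gradient on one error factor via $L^3$--$L^6$ and apply H\"older in time:
\[
\int_0^t\!\!\int_{\mathbb R^3}|U^\varepsilon|^2|\nabla u|\,{\rm d}x\,{\rm d}\tau\lesssim\sup_{\tau\le t}\|U^\varepsilon\|_{H^1}\,\|\nabla U^\varepsilon\|_{L^2_t(L^2)}\,\|\nabla u\|_{L^2_t(L^2)}\lesssim\delta_1\tilde{\mathcal X}(t).
\]
The same works for $W^\varepsilon$ against $\nabla\omega$, where only $\nabla W^\varepsilon\in L^2_t$ is needed. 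It also covers the terms carrying $\nabla(u^\varepsilon,\omega^\varepsilon)$, e.g. $\int|Q^\varepsilon|^2{\rm div}\,u^\varepsilon$. For these, Theorem \ref{Th1} gives no decay, only $L^2_t$ bounds. The estimate then closes as $\tilde{\mathcal X}(t)\le C\tilde{\mathcal X}(0)+C\varepsilon^2+C(\delta_0+\delta_1)\tilde{\mathcal X}(t)$ by absorption, with no decay and no extra hypothesis. Alternatively, Young's inequality plus Gronwall with an $L^1_t$ weight built from the squared dissipation norms in \eqref{TG2} and \eqref{TGG2} also works.

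\textbf{Source terms.} Your bound has the same slip: $\|u\|_{H^3}\|\partial_tu\|_{H^2}$ is only $L^2_t$, not $L^1_t$. Use instead $\|\partial_t(u\otimes u)\|_{H^2}\lesssim\|\partial_tu\|_{H^2}\|\nabla u\|_{H^1}$, which is $L^1_t$. Or, as the paper does, pair $\varepsilon D_t^u\pi_1$ against $\nabla Q^\varepsilon\in L^2_t$ using the $\dot H^{-1}$ and $L^2_tH^1$ bounds of Proposition \ref{Ppressure}.

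\textbf{Density limit.} Your doubt is justified. Estimate \eqref{TD2} gives only
\[
\|\varepsilon^{-2}(\rho^\varepsilon-1)-[P'(1)]^{-1}\pi_1\|_{H^2}=\varepsilon^{-1}\|Q^\varepsilon\|_{H^2}\le C_2^{1/2},
\]
which is boundedness, not convergence. The paper's proof gives no further argument for the second and third lines of \eqref{convergence1}.
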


\subsection{Strategy of the proofs}

We briefly describe the main ingredients of our proofs. For Theorem \ref{Th1}, the main difficulties come from the singular acoustic terms of order $\varepsilon^{-1}$. These the singular pressure terms cannot be treated as
perturbative source terms in the higher-order energy estimates. Instead, we use the symmetrization of the system and choose suitable density-dependent weight to cancel the leading acoustic terms. The commutators can be bounded because each spacial derivative on density variables will produce an extra factor of $\varepsilon.$ Another difficulty is the degeneracy of dissipation: there is no intrinsic viscosity in the Euler phase. We instead use the relaxation from the drag term $u^\varepsilon-\omega^\varepsilon$. Combining this relaxation with the viscous dissipation of $u^\varepsilon$ yields lower-order spacial dissipation of   $\omega^\varepsilon$. We then prove these weaker dissipations are sufficient for controlling the nonlinear terms.

For Theorem \ref{Th2}, the same degenerate dissipation structure remains. To overcome this, we combine a
spectral analysis of the linearized system with a time-weighted nonlinear energy method. At low frequencies, the relaxation mode provides an additional factor for the relative velocity $u-\omega$. This makes the relative velocity has a faster decay than the full velocity pair $(u,\omega)$. The pressure functions of the limiting system are recovered from the elliptic equations obtained by taking the divergence of the two momentum equations. The resulting bounds, stated in Proposition \ref{Ppressure}, control both the pressure gradients and the material derivatives of the pressures. These pressure
estimates play an important role  in the subsequent  error estimates.

For Theorem \ref{Th3}, we introduce the pressure-corrected errors
\[
\tilde{ q}=q^\varepsilon-\varepsilon[P'(1)]^{-1}\pi_1,
\quad
\tilde{ r}=r^\varepsilon-\varepsilon\pi_2,
\quad
\tilde{ u}=u^\varepsilon-u,
\quad
\tilde{\omega}=\omega^\varepsilon-\omega.
\]
The correction extracts the incompressible pressure profiles from the density fluctuation and removes the singular pressure mismatch in
the error system. We then repeat the strategy in the Theorem \ref{Th1}: the symmetrization yields a bounds on $(\widetilde q,\widetilde u,\widetilde r,\widetilde\omega)$, the drag term provides a relaxation dissipation of $\tilde{ u}-\tilde{\omega}$, and the interactive estimates recover  
acoustic dissipations. As a result, the error functional $\mathcal{X}(t)$ defined in  \eqref{G6.2} satisfies
\[
\tilde{\mathcal X}(t)
\lesssim
\tilde{\mathcal X}(0)+\varepsilon^2
+
(\delta_0+\delta_1)\tilde{\mathcal X}(t),
\]
where $\delta_0$ and $\delta_1$ denote the sizes of initial perturbations for the compressible and limiting systems, respectively. Choosing $\delta_0$ and $\delta_1$ sufficiently small, and using the well-prepared condition \eqref{TD1}, we can obtain the global $H^2$ error estimate \eqref{TD2}.

\subsection{Outline of this paper} Section 2 introduces the
notation and analytic tools used throughout the paper. Section 3 establishes
the uniform regularity of the scaled two-phase system. In Section 4, we prove the
well-posedness of the limiting incompressible two-phase system and establish the pressure
estimates needed for the error analysis. In Section 5, we prove the global $H^2$-error estimate and
justify the quantitative low Mach number limit.

\section{Preliminaries}
\subsection{Notations}
Throughout this paper, \(C\) denotes a generic positive constant which may change from line to line. We write \(A\lesssim B\) if \(A\leq CB\) for some constant \(C>0\), and \(A\sim B\) if $C^{-1}A\leq B\leq CA$ for some constant \(C>0\). 
Unless otherwise specified, all such constants are independent of the Mach number \(\varepsilon\). Let \(X\) be a Banach space and \(I\subset\mathbb R\) be an interval.
We denote by \(\mathcal C(I;X)\) the space of continuous functions on \(I\) with values
in \(X\), and by \(L^q(I;X)\) the standard Bochner space. When \(I=(0,T)\), we use
the shorthand 
\begin{align*}
\|f\|_{L^q_T(X)}:=\|f\|_{L^q(0,T;X)}.    
\end{align*}
For $ f_1,\cdots f_k\in X$, we set
\begin{align*}
\|(f_1,\ldots,f_k)\|_X:=\sum_{j=1}^k\|f_j\|_X .    
\end{align*}
Finally, 
the Fourier transform and its inverse are denoted by
$\mathcal F f=\widehat f$ and $\mathcal F^{-1}f=\check f $, respectively.

For a multi-index \(\alpha=(\alpha_1,\alpha_2,\alpha_3)\in\mathbb N^3\), we write
\begin{align*}
\partial^\alpha=\partial_x^\alpha
:=
\partial_{x_1}^{\alpha_1}
\partial_{x_2}^{\alpha_2}
\partial_{x_3}^{\alpha_3},
\quad
|\alpha|:=\alpha_1+\alpha_2+\alpha_3 .    
\end{align*}
For simplicity, \(\partial_i\) denotes \(\partial_{x_i}\), \(i=1,2,3\).
For an integer \(m\geq0\), we use the following Sobolev-type norms:
\begin{align*}
\|g\|_{H^m}
:=
\sum_{|\alpha|\leq m}
\|\partial^\alpha g\|_{L^2_x},
\quad
\|g\|_{\dot H^m}
:=
\sum_{|\alpha|=m}
\|\partial^\alpha g\|_{L^2_x}.    
\end{align*}
Here and below, all spatial norms are taken with respect to the \(x\)-variable.
Finally,  for two operators \(\mathcal A_1\) and \(\mathcal A_2\), their commutator is
defined by
\begin{align*}
[\mathcal A_1,\mathcal A_2]
:=
\mathcal A_1\mathcal A_2-\mathcal A_2\mathcal A_1 .    
\end{align*}

\subsection{Analytic tools}
First, we introduce several Sobolev inequalities.
\begin{lem} [{\!\!\cite[Lemma 2.1]{CDM-KRM-2011}} and {\cite[Lemmas 2.1--2.2]{Dk-MZ-1992}}]\label{L2.1}   
For any {$g,h\in H^3(\mathbb{R}^3)$} and any multi-index $\alpha$  with $1\leq|\alpha|\leq3$, it holds that
\begin{align*}
\|g\|_{L^{\infty}(\mathbb{R}^3) } \lesssim&\, \|\nabla g\|_{L^{2}(\mathbb{R}^3)}^{\frac{1}{2}}
\|\nabla ^{2}g\|_{L^{2} (\mathbb{R}^3)}^{\frac{1}{2}}, \\
\|gh\|_{H^{1}(\mathbb{R}^3) }  \lesssim&\, \|g\|_{H^{2}(\mathbb{R}^3) }\|\nabla  h\|_{H^{2}(\mathbb{R}^3) }, \\
\|\partial^{\alpha}(gh)\|_{L^{2}(\mathbb{R}^3) }
\lesssim&\,\|\nabla  g\|_{H^{2}(\mathbb{R}^3) }\|\nabla  h\|_{H^{2}(\mathbb{R}^3) },\\
\|g\|_{L^6(\mathbb{R}^3)} \lesssim&\,  \|\nabla g\|_{L^2(\mathbb{R}^3)}\lesssim  \|g\|_{H^1 (\mathbb{R}^3)},\\
  \|g\|_{L^q (\mathbb{R}^3)} \lesssim&\, \|g\|_{H^1(\mathbb{R}^3) }, \quad 2\leq q\leq 6.
\end{align*}
\end{lem}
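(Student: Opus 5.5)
The statement to prove is Lemma \ref{L2.1}, which collects standard Sobolev-type inequalities in $\mathbb R^3$. I would prove each line separately, relying only on the Fourier characterization of $H^s$, the Sobolev embedding $\dot H^1\hookrightarrow L^6$, and Gagliardo--Nirenberg interpolation.

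\textbf{The $L^6$ and $L^q$ bounds.} The fourth inequality $\|g\|_{L^6}\lesssim\|\nabla g\|_{L^2}$ is the classical Sobolev inequality in $\mathbb R^3$. One obtains it by proving Gagliardo--Nirenberg--Sobolev in $L^1$ form and applying it to $|g|^4$, or by Hardy--Littlewood--Sobolev applied to $g=I_1(\text{Riesz}\cdot\nabla g)$. The trivial bound $\|\nabla g\|_{L^2}\le\|g\|_{H^1}$ completes that line. The fifth inequality then follows by Hölder interpolation between $L^2$ and $L^6$: for $2\le q\le6$,
\[
\|g\|_{L^q}\le\|g\|_{L^2}^{\theta}\|g\|_{L^6}^{1-\theta}\lesssim\|g\|_{H^1},
\qquad \frac1q=\frac{\theta}{2}+\frac{1-\theta}{6}.
\]

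\textbf{The $L^\infty$ bound.} For the first inequality I split $\widehat g$ at a frequency $R$. Cauchy--Schwarz on each piece gives
\[
\int_{|\xi|\le R}|\widehat g|\,d\xi\le\Big(\int_{|\xi|\le R}|\xi|^{-2}\,d\xi\Big)^{1/2}\|\nabla g\|_{L^2}\lesssim R^{1/2}\|\nabla g\|_{L^2},
\]
\[
\int_{|\xi|>R}|\widehat g|\,d\xi\le\Big(\int_{|\xi|>R}|\xi|^{-4}\,d\xi\Big)^{1/2}\|\nabla^2 g\|_{L^2}\lesssim R^{-1/2}\|\nabla^2 g\|_{L^2}.
\]
Optimizing in $R$, namely $R=\|\nabla^2g\|_{L^2}/\|\nabla g\|_{L^2}$, gives $\|g\|_{L^\infty}\le\|\widehat g\|_{L^1}\lesssim\|\nabla g\|_{L^2}^{1/2}\|\nabla^2g\|_{L^2}^{1/2}$.

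\textbf{The product estimates.} For the second inequality, Hölder gives $\|gh\|_{L^2}\le\|g\|_{L^3}\|h\|_{L^6}\lesssim\|g\|_{H^1}\|\nabla h\|_{L^2}$. For the derivative, $\nabla(gh)=h\nabla g+g\nabla h$, and the two terms are bounded as follows:
\[
\|h\nabla g\|_{L^2}\le\|h\|_{L^\infty}\|\nabla g\|_{L^2}\lesssim\|\nabla h\|_{H^1}\|g\|_{H^1},
\qquad
\|g\nabla h\|_{L^2}\le\|g\|_{L^\infty}\|\nabla h\|_{L^2}\lesssim\|g\|_{H^2}\|\nabla h\|_{L^2},
\]
using the $L^\infty$ bound just proved. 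For the third inequality I expand by Leibniz, $\partial^\alpha(gh)=\sum_{\beta\le\alpha}\binom{\alpha}{\beta}\partial^\beta g\,\partial^{\alpha-\beta}h$, and assume without loss of generality that $|\beta|\ge|\alpha-\beta|$. The cases are:
\begin{itemize}
\item if $|\alpha-\beta|=0$, then $|\beta|\ge1$ and $\|\partial^\beta g\|_{L^2}\|h\|_{L^\infty}\lesssim\|\nabla g\|_{H^2}\|\nabla h\|_{H^1}$;
\item if $|\alpha-\beta|=1$, then $|\beta|\le2$ and $\|\partial^\beta g\|_{L^3}\|\partial^{\alpha-\beta}h\|_{L^6}\lesssim\|\nabla g\|_{H^2}\|\nabla^2 h\|_{L^2}$ by the $L^q$ bound.
\end{itemize}
Since $|\alpha|\le3$, the case $|\alpha-\beta|\ge2$ cannot occur with $|\beta|\ge|\alpha-\beta|$, so these exhaust the Leibniz terms.

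\textbf{Main point of care.} The only step needing attention is the third inequality. It must avoid any undifferentiated $L^2$ norm of $g$ or $h$, which is why the lowest-order factor always goes into $L^\infty$ or $L^6$, bounded by derivatives only. The case split above achieves exactly this. Everything else is textbook, as in the cited references.
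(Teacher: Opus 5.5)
Your proposal is correct. The paper gives no proof of this lemma; it simply quotes it from Carrillo--Duan--Moussa (Lemma 2.1) and Deckelnick (Lemmas 2.1--2.2). Your argument is a complete, self-contained version of the standard proofs behind those citations:
\begin{itemize}
\item Sobolev's inequality plus H\"older interpolation for the $L^6$ and $L^q$ bounds;
\item a Fourier splitting at $R=\|\nabla^2 g\|_{L^2}/\|\nabla g\|_{L^2}$ for the $L^\infty$ bound, which is legitimate since $\widehat g\in L^1$ for $g\in H^2(\mathbb R^3)$;
\item a Leibniz case analysis for the product estimates.
\end{itemize}

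Two small points are worth making explicit. First, the reduction to $|\beta|\ge|\alpha-\beta|$ in the third inequality is justified because the right-hand side $\|\nabla g\|_{H^2}\|\nabla h\|_{H^2}$ is symmetric in $g$ and $h$. Second, your estimates for the second inequality actually give the slightly sharper bound $\|gh\|_{H^1}\lesssim\|g\|_{H^2}\|\nabla h\|_{H^1}$, which implies the stated one.
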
 

\begin{lem}[{\!\!\cite[Theorem 1]{Stein-1970}}] \label{L2.3}
Let $0<s<3$, $1<p<q<\infty$, $\frac{1}{q}+\frac{s}{3}=\frac{1}{p}$, then
\begin{align*}
\|\Lambda^{-s} g\|_{L^q}\lesssim \|g\|_{L^p},   
\end{align*}
where $\Lambda^{-s}:=(-\Delta)^{s/2}$.
\end{lem}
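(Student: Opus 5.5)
The statement is the Hardy--Littlewood--Sobolev inequality. Note that the operator meant is $\Lambda^{-s}=(-\Delta)^{-s/2}$, the inverse of $(-\Delta)^{s/2}$. I would prove it with Hedberg's pointwise argument, which reduces everything to the Hardy--Littlewood maximal theorem.

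\textbf{Step 1 (kernel representation).} For $0<s<3$ the Fourier multiplier $|\xi|^{-s}$ is the Fourier transform of $c_s|x|^{-(3-s)}$, with an explicit constant $c_s$ given by Gamma functions. This can be checked by Gaussian subordination, $|\xi|^{-s}=\Gamma(s/2)^{-1}\int_0^\infty t^{s/2-1}e^{-t|\xi|^2}\,{\rm d}t$, followed by inverting the heat kernel. Hence, for Schwartz $g$,
\begin{align*}
\Lambda^{-s}g(x)=c_s\int_{\mathbb R^3}\frac{g(y)}{|x-y|^{3-s}}\,{\rm d}y.
\end{align*}
It therefore suffices to bound the Riesz potential $I_sg$ of $|g|$ and to extend to $L^p$ by density at the end.

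\textbf{Step 2 (Hedberg splitting).} Fix $x$ and a radius $R>0$, and split the integral into $|x-y|<R$ and $|x-y|\ge R$.

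For the near part, decompose into dyadic annuli $2^{-k-1}R\le|x-y|<2^{-k}R$. On each annulus the kernel is at most $(2^{-k-1}R)^{s-3}$ and the integral of $|g|$ is at most a constant times $(2^{-k}R)^3 Mg(x)$, where $M$ is the Hardy--Littlewood maximal function. Since $s>0$, the resulting geometric series converges and gives a bound $\lesssim R^s Mg(x)$.

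For the far part, apply H\"older's inequality with exponent $p'$ to the kernel. The integral $\int_{|z|\ge R}|z|^{-(3-s)p'}\,{\rm d}z$ is finite exactly when $(3-s)p'>3$, which is equivalent to $s<3/p$. This holds because $1/q=1/p-s/3>0$. The far part is then $\lesssim R^{s-3/p}\|g\|_{L^p}$.

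\textbf{Step 3 (optimisation and conclusion).} Choose $R$ so that the two bounds balance, namely $R^{3/p}=\|g\|_{L^p}/Mg(x)$. This yields the pointwise estimate
\begin{align*}
|I_s g(x)|\lesssim (Mg(x))^{1-\frac{sp}{3}}\|g\|_{L^p}^{\frac{sp}{3}}=(Mg(x))^{p/q}\|g\|_{L^p}^{1-p/q}.
\end{align*}
Raise this to the power $q$ and integrate. Since $p>1$, the maximal theorem gives $\|Mg\|_{L^p}\lesssim\|g\|_{L^p}$, and therefore $\|I_sg\|_{L^q}\lesssim\|g\|_{L^p}^{p/q}\,\|g\|_{L^p}^{1-p/q}=\|g\|_{L^p}$. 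The proof is completed by density of Schwartz functions together with Fatou's lemma.

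\textbf{Main obstacle.} The only real inputs are the maximal theorem and the kernel identity in Step 1. The step that needs the most care is checking the exponent bookkeeping: $s<3/p$ is needed for the far-field integrability, and $p>1$ is needed for the maximal theorem. This also explains why the endpoint $p=1$ is excluded from the statement.
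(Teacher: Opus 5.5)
Your proof is correct, and you are right that the definition in the statement should read $\Lambda^{-s}=(-\Delta)^{-s/2}$. The paper does not prove this lemma; it only cites Stein's Theorem~1 (Chapter~V), and Stein's proof takes a different route from yours.

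Stein also splits the kernel $|x|^{s-3}=K_1+K_\infty$ at a radius $\mu$, using $\|K_1\|_{L^1}\sim\mu^{s}$ and $\|K_\infty\|_{L^{p'}}\sim\mu^{-3/q}$. The difference is that he chooses $\mu$ according to the level $\lambda$ rather than the point $x$. This choice makes $\|K_\infty*g\|_{L^\infty}\le\lambda/2$, and Chebyshev's inequality applied to $K_1*g$ then gives the weak-type $(p,q)$ bound. The strong-type estimate for $p>1$ follows from the Marcinkiewicz interpolation theorem.

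Your Hedberg argument instead chooses the radius pointwise and pays with the maximal function. You therefore get the strong-type bound directly from $|I_sg|\lesssim (Mg)^{p/q}\|g\|_{L^p}^{1-p/q}$. Interpolation is not eliminated, only hidden inside the Hardy--Littlewood maximal theorem.

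Your route buys brevity and a pointwise inequality that is stronger than needed. For example, it gives the weak-type endpoint $p=1$ at once from the weak-type $(1,1)$ bound for $M$. Stein's route buys economy of tools: it uses only H\"older/Young and Marcinkiewicz, with no covering lemma.
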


Next, we recall the following commutator estimate:
\begin{lem}[{\!\!\cite[Appendix]{commutator1,commutator2}}]\label{L2.4}
For $k\geq 0$, we have
\begin{align*}
\|\nabla ^{k}(gh) \|_{L^r(\mathbb{R}^3)} \lesssim&\, \|g\|_{L^{r_1}(\mathbb{R}^3) }\|\nabla^{k} h\|_{L^{r_2} }+ \|h\|_{L^{r_3}(\mathbb{R}^3) }\|\nabla^{k}_{x}g\|_{L^{r_4} (\mathbb{R}^3)},\\
\|\nabla^{k} (gh)-g\nabla^k  h \|_{L^r(\mathbb{R}^3)} \lesssim&\, \|\nabla  g\|_{L^{r_1}(\mathbb{R}^3)}\|\nabla ^{k-1}h\|_{L^{r_2}(\mathbb{R}^3)}+ \|h\|_{L^{r_3}(\mathbb{R}^3)}\|\nabla^{k} g\|_{L^{r_4}(\mathbb{R}^3)},    
\end{align*}
where $1<r,r_2,r_4<\infty$ and $r_i(1\leq i\leq 4)$ satisfy 
\begin{align*}
\frac{1}{r_1}+\frac{1}{r_2}=\frac{1}{r_3}+\frac{1}{r_4}=\frac{1}{r}.   
\end{align*}
\end{lem}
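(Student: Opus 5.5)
The statement immediately above is Lemma \ref{L2.4}, the Kato--Ponce type product and commutator estimate, quoted from the literature. We would prove it by reducing both inequalities to bilinear estimates via the Leibniz rule and Gagliardo--Nirenberg interpolation, then applying H\"older's inequality.

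\textbf{Product estimate.} Expand
\[
\nabla^k(gh)=\sum_{j=0}^k \binom{k}{j}\nabla^j g\,\nabla^{k-j}h .
\]
The endpoint terms $j=0$ and $j=k$ are bounded directly by H\"older as $\|g\|_{L^{r_1}}\|\nabla^k h\|_{L^{r_2}}$ and $\|h\|_{L^{r_3}}\|\nabla^k g\|_{L^{r_4}}$.

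For an intermediate term $0<j<k$, set $\theta=j/k$ and choose exponents $p,s$ by
\[
\frac1p=\frac{1-\theta}{r_1}+\frac{\theta}{r_4},\qquad \frac1s=\frac{\theta}{r_3}+\frac{1-\theta}{r_2}.
\]
Then $1/p+1/s=1/r$, using $1/r_1+1/r_2=1/r_3+1/r_4=1/r$. The Gagliardo--Nirenberg inequalities give
\[
\|\nabla^j g\|_{L^p}\lesssim\|g\|_{L^{r_1}}^{1-\theta}\|\nabla^k g\|_{L^{r_4}}^{\theta},\qquad
\|\nabla^{k-j}h\|_{L^s}\lesssim\|h\|_{L^{r_3}}^{\theta}\|\nabla^k h\|_{L^{r_2}}^{1-\theta}.
\]
By H\"older and Young's inequality $a^{1-\theta}b^\theta\le a+b$, the product of these is bounded by
\[
\|g\|_{L^{r_1}}\|\nabla^kh\|_{L^{r_2}}+\|h\|_{L^{r_3}}\|\nabla^kg\|_{L^{r_4}} .
\]

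\textbf{Commutator estimate.} Subtracting $g\nabla^k h$ removes the $j=0$ term, so every remaining term carries at least one derivative on $g$. We apply the same interpolation to $\nabla g$ in place of $g$ and with $k-1$ in place of $k$. Each term $\nabla^j g\,\nabla^{k-j}h$ with $1\le j\le k$ is written as $\nabla^{j-1}(\nabla g)\,\nabla^{(k-1)-(j-1)}h$, and we interpolate between:
\begin{itemize}
\item $\|\nabla g\|_{L^{r_1}}$ and $\|\nabla^k g\|_{L^{r_4}}$ for the factor involving $g$;
\item $\|h\|_{L^{r_3}}$ and $\|\nabla^{k-1}h\|_{L^{r_2}}$ for the factor involving $h$.
\end{itemize}
The scaling works out because the total derivative count is $k-1$ after factoring out one derivative of $g$.

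\textbf{Main obstacle.} The delicate point is the admissibility of the endpoint exponents. Gagliardo--Nirenberg with $L^\infty$ endpoints, such as $r_1=\infty$ or $r_3=\infty$ (which is how the lemma is used later), still works for the intermediate derivatives, because the target exponents $p,s$ are finite. Care is needed only when $\theta$ hits the endpoints, and those cases were already handled by H\"older.

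The conditions $1<r,r_2,r_4<\infty$ are exactly what is needed for the interpolation to be valid. Alternatively, the whole statement follows from the Coifman--Meyer bilinear multiplier theorem via paraproduct decomposition, as in Kato--Ponce, and we would simply cite that route.
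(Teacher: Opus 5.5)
Your proof is correct, but it takes a different route from the one behind the paper's citation. The paper gives no argument for Lemma~\ref{L2.4}. It quotes the appendices of Kato--Ponce and Kenig--Ponce--Vega, where such product and commutator estimates are proved for fractional operators ($J^s$, $D^s$) through Coifman--Meyer bilinear multiplier and paraproduct theory; that is where the hypotheses $1<r,r_2,r_4<\infty$ come from. You instead use that the lemma is stated for integer-order $\nabla^k$, and the paper only ever applies it to $\partial^\alpha$ with $|\alpha|\le 3$. In that setting the finite Leibniz expansion suffices, combined with the scale-invariant Gagliardo--Nirenberg inequality at $\theta=j/k$, H\"older, and $a^{1-\theta}b^{\theta}\le a+b$. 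Your exponent bookkeeping $1/p+1/s=1/r$ is right. The commutator does reduce to the same per-term bound for the pair $(\nabla g,h)$ at order $k-1$, with parameter $(j-1)/(k-1)$ and the terms $j=1$, $j=k$ handled directly by H\"older. Your route gives a short, self-contained proof in exactly the generality the paper needs. The cited route covers non-integer orders and genuine commutator bounds for non-local operators, where no finite Leibniz formula is available.

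One inaccuracy in your last paragraph: the conditions $1<r,r_2,r_4<\infty$ are not ``exactly what is needed'' for your interpolation. With intermediate order $j\ge 1$ and $\theta=j/k<1$ you are never in an exceptional case of the Gagliardo--Nirenberg inequality. Your argument therefore does not depend on these hypotheses; they are simply inherited from the fractional-order statements. For the same reason, $r_1=\infty$ or $r_3=\infty$ is harmless because the interpolated derivative order is at least one, not because $p,s$ are finite.
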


\begin{comment}
To obtain the time-decay, we need the following convolution inequalities:

\begin{lem} [{\!\!\cite[Lemma 3.2]{CDM-KRM-2011}}]\label{L2.5}   
Given any $0<\beta_1\ne 1$ and $\beta_2>1$, it holds that
\begin{align*}
\int_0^t (1+t-s)^{-\beta_1}(1+s)^{-\beta_2} {\rm d}s \leq C(1+t)^{-\min\{\beta_1,\beta_2\}}, 
\end{align*}
for any $t\geq 0$.
\end{lem} 
\end{comment}

\begin{comment}
\begin{lem} [{\!\!\cite[Lemma 3.3]{CDM-KRM-2011}}]\label{L2.6}   
Let $\gamma>1$ and $g_1,g_2\in \mathcal{C}(\mathbb{R}_+,\mathbb{R}_+)$ with
$g_1(0)=0$. For $A\in \mathbb{R}_+$, define $\mathcal{B}_{A}:=\{y\in \mathcal{C}(\mathbb{R}_+,\mathbb{R}_+)|\, \,y\leq A+g_1(A)y+g_2(A)y^{\gamma},\,\, y(0)\leq A\}$.
Then, there exists a constant {$A_0>0$} such that for
any $0<A\leq A_0$, if $y(t) \in \mathcal{B}_A$ for each $t\geq 0$, then it holds that
\begin{align*}
 \sup_{t\geq 0}y(t)\leq 2A.
\end{align*}
\end{lem} 
\end{comment}

\section{Uniform regularity of the compressible two-phase flow model}

In this section, we prove the global existence and uniform-in-$\varepsilon$ estimates of strong solutions
to the Cauchy problem for the scaled compressible two-phase flow model \eqref{I3}--\eqref{I3-1}.

\subsection{A priori estimates}
 Let
$(q^\varepsilon,u^\varepsilon,r^\varepsilon,\omega^\varepsilon)$ be a strong
solution to the   problem  \eqref{I3}--\eqref{I3-1} in the time interval \([0,T)\) with \(T>0\).
We assume the following a priori assumption:
\begin{align}\label{G3.1}
\sup_{0\leq t<T}
\|(q^\varepsilon,u^\varepsilon,r^\varepsilon,\omega^\varepsilon)(t)\|_{H^3}
\leq \delta,
\end{align}
where the constant \(0<\delta<1\) is suitably small and independent of \(\varepsilon\).
By the Sobolev embedding $H^2(\mathbb{R}^3) \hookrightarrow L^\infty(\mathbb{R}^3)$ and the assumption \eqref{G3.1}, we have 
\begin{align*}
\frac{1}{2}\leq1+\varepsilon q ^\varepsilon \leq \frac{3}{2}, \quad \frac{1}{2}\leq 1+\varepsilon r^\varepsilon\leq \frac{3}{2}.
\end{align*}

We first establish the zero-order energy estimate under the a priori assumption \eqref{G3.1}.

\begin{lem}\label{L3.1}
Let $(q^\varepsilon, u^\varepsilon, r^\varepsilon,\omega^\varepsilon)$ be the strong solution to the {scaled} compressible Navier--Stokes--Euler system \eqref{I3}--\eqref{I3-1} over $t\in [0,T]$. Suppose that \eqref{G3.1} holds for some constant $\delta_1\in (0,1)$. There exists a positive constant $\eta_1>0$, independent of $\varepsilon$, such that
\begin{align}\label{G3.2}
&\frac{{\rm d}}{{\rm d}t} \big( P^\prime(1)\|q^\varepsilon\|_{L^2}^2+\|(u^\varepsilon,r^\varepsilon,\omega^\varepsilon)\|_{L^2}^2     \big) +\eta_1\big(  \|\nabla u^\varepsilon\|_{L^2}^2+\|{\rm div}\,u^\varepsilon\|_{L^2}^2+\|u^\varepsilon-\omega^\varepsilon\|_{L^2}^2 \big)\nonumber\\
&\quad\lesssim \delta \|\nabla (q^\varepsilon,u^\varepsilon,r^\varepsilon,\omega^\varepsilon)\|_{L^2}^2+\delta \|\nabla^2 u^\varepsilon\|_{L^2}^2+\delta\|u^\varepsilon-\omega^\varepsilon\|_{L^2}^2.
\end{align}
\end{lem}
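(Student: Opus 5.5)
Dropping the superscript $\varepsilon$, the plan is to take the $L^2$ inner product of \eqref{I3}$_1$ with $P'(1)q$, of \eqref{I3}$_2$ with $u$, of \eqref{I3}$_3$ with $r$, and of \eqref{I3}$_4$ with $\omega$, and then add the four identities. The equations are first rewritten so that the singular terms have constant coefficients plus small remainders:
\[
\partial_t q+u\cdot\nabla q+\tfrac1\varepsilon{\rm div}\,u=-q\,{\rm div}\,u,\qquad
\partial_t u+u\cdot\nabla u+\tfrac{P'(1)}{\varepsilon}\nabla q-\mu\Delta u-(\mu+\lambda)\nabla{\rm div}\,u+(u-\omega)=F_1,
\]
and similarly for $(r,\omega)$. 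The key algebraic point is the cancellation of the $O(\varepsilon^{-1})$ acoustic terms:
\[
\tfrac{P'(1)}{\varepsilon}\int q\,{\rm div}\,u+\tfrac{P'(1)}{\varepsilon}\int\nabla q\cdot u=0,
\]
and the analogous identity for $(r,\omega)$ with coefficient $1$. The viscous terms give $\mu\|\nabla u\|_{L^2}^2+(\mu+\lambda)\|{\rm div}\,u\|_{L^2}^2$. Since $2\mu+\lambda>0$, I bound this below by $\eta\big(\|\nabla u\|_{L^2}^2+\|{\rm div}\,u\|_{L^2}^2\big)$, using $\|{\rm div}\,u\|_{L^2}\le\|\nabla u\|_{L^2}$ when $\lambda<0$. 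The drag terms $\int(u-\omega)\cdot u-\int(u-\omega)\cdot\omega$ combine to give $\|u-\omega\|_{L^2}^2$.

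Next I would identify the remainders. They come from expanding the variable coefficients:
\[
\frac{1}{\varepsilon}\Big(\frac{P'(1+\varepsilon q)}{1+\varepsilon q}-P'(1)\Big)=O(|q|),\qquad
\frac1{\varepsilon}\Big(\frac{1}{1+\varepsilon r}-1\Big)=O(|r|),
\]
\[
\frac{\mu}{1+\varepsilon q}-\mu=O(\varepsilon|q|),\qquad
\frac{1+\varepsilon r}{1+\varepsilon q}-1=O(\varepsilon|q|+\varepsilon|r|).
\]
These bounds are uniform in $\varepsilon$ because $\tfrac12\le1+\varepsilon q,\ 1+\varepsilon r\le\tfrac32$ and $\varepsilon<1$. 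Hence the remainders are quadratic but have no singular factor.

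Each remainder is then estimated by Hölder's inequality with $L^3\cdot L^6\cdot L^2$ or $L^\infty\cdot L^2\cdot L^2$, together with Lemma \ref{L2.1} and the a priori bound \eqref{G3.1}:
\begin{itemize}
\item $\int u\cdot\nabla q\,q$ and $\int q\,{\rm div}\,u\,q$ are bounded by $\|q\|_{L^3}\|q\|_{L^6}\|\nabla(q,u)\|_{L^2}\lesssim\delta\|\nabla(q,u)\|_{L^2}^2$.
\item $\int O(q)\nabla q\cdot u\lesssim\|q\|_{L^3}\|\nabla q\|_{L^2}\|u\|_{L^6}\lesssim\delta\|\nabla(q,u)\|_{L^2}^2$.
\item The viscous remainder is $\int O(\varepsilon q)\nabla^2u\cdot u\lesssim\|q\|_{L^3}\|u\|_{L^6}\|\nabla^2u\|_{L^2}\lesssim\delta\big(\|\nabla u\|_{L^2}^2+\|\nabla^2u\|_{L^2}^2\big)$. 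This is the source of the $\delta\|\nabla^2u\|_{L^2}^2$ term in \eqref{G3.2}.
\item The drag remainder $\int O(\varepsilon(q,r))(\omega-u)\cdot u$ is handled the same way after splitting $u=(u-\omega)+\omega$, or more simply via $\|(q,r)\|_{L^3}\|u\|_{L^6}\|u-\omega\|_{L^2}$. This gives the $\delta\|u-\omega\|_{L^2}^2$ term plus $\delta\|\nabla u\|_{L^2}^2$.
\end{itemize}
The Euler-phase terms $\int\omega\cdot\nabla\omega\cdot\omega$, $\int\omega\cdot\nabla r\,r$, $\int r\,{\rm div}\,\omega\,r$ and $\int O(r)\nabla r\cdot\omega$ are all bounded by $\delta\|\nabla(r,\omega)\|_{L^2}^2$ in the same way.

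The only delicate point is making sure that every remainder carrying a $1/\varepsilon$ actually has a compensating factor $\varepsilon q$ or $\varepsilon r$ before the estimate is made. Otherwise the bound would not be uniform in $\varepsilon$. This requires expanding the pressure coefficient around $1$ before integrating, and this is where I would be most careful. Summing everything and taking $\eta_1$ as the minimum of $\eta$ and $1$ then yields \eqref{G3.2}.
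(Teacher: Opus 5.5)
Your proposal is correct and follows the paper's proof essentially step for step. Like the paper, you test \eqref{I3} against $(P'(1)q^\varepsilon,u^\varepsilon,r^\varepsilon,\omega^\varepsilon)$, cancel the $O(\varepsilon^{-1})$ acoustic terms by integration by parts, and bound the remaining convection, pressure-coefficient, viscous and drag remainders by $L^3\cdot L^6\cdot L^2$ H\"older estimates together with Lemma \ref{L2.1} and \eqref{G3.1}; your check that $\mu\|\nabla u^\varepsilon\|_{L^2}^2+(\mu+\lambda)\|{\rm div}\,u^\varepsilon\|_{L^2}^2$ is coercive under $2\mu+\lambda>0$ supplies a step the paper leaves implicit.
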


\begin{proof}
Taking inner product of \eqref{I3}$_1$--\eqref{I3}$_4$ with
$\left(P'(1)q^\varepsilon, u^\varepsilon, r^\varepsilon,\omega^\varepsilon\right)$, we obtain  
\begin{align}\label{G3.3}
&\frac{1}{2}\frac{{\rm d}}{{\rm d}t} \left( P^\prime(1)\|q^\varepsilon\|_{L^2}^2+\|(u^\varepsilon,r^\varepsilon,\omega^\varepsilon)\|_{L^2}^2 \right)\nonumber\\
&\quad +\mu\|\nabla  u^\varepsilon\|_{L^2}^2+(\mu+\lambda) \|{\rm div}\,u^\varepsilon\|_{L^2}^2+ \|u^\varepsilon-\omega^\varepsilon\|_{L^2}^2=\sum_{i=1}^4 I_i, 
\end{align}
where
\begin{align}
&I_1=\frac{1}{2}\int_{\mathbb R^3}\left( -|q^\varepsilon|^2{\rm div}\,u^\varepsilon -|r^\varepsilon|^2{\rm div}\omega^\varepsilon +|u^\varepsilon|^2{\rm div}u^\varepsilon + |\omega^\varepsilon|^2{\rm div}\,  \omega^\varepsilon  \right){\rm d}x,\nonumber\\
&I_2=-\frac{1}{\varepsilon}\int_{\mathbb R^3} \left[\Big( \frac{P^\prime(1+\varepsilon q^\varepsilon)}{1+\varepsilon q^\varepsilon}-P^\prime(1)    \Big) \nabla q^\varepsilon \cdot u^\varepsilon +\Big( \frac{1}{1+\varepsilon r^\varepsilon}-1\Big) \nabla r^\varepsilon\cdot\omega^\varepsilon\right] {\rm d}x,\nonumber\\
&I_3=-\varepsilon\int_{\mathbb R^3} \Big(\frac{ \mu q^\varepsilon\Delta u^\varepsilon}{1+\varepsilon q^\varepsilon}+\frac{ (\mu+\lambda) q^\varepsilon\nabla {\rm div}\, u^\varepsilon}{1+\varepsilon q^\varepsilon}\Big)\cdot u^\varepsilon{\rm d}x,\nonumber\\
&I_4=\varepsilon\int_{\mathbb R^3} \frac{r^\varepsilon-q^\varepsilon}{1+\varepsilon q^\varepsilon}(\omega^\varepsilon-u^\varepsilon)\cdot \omega^\varepsilon{\rm d}x. \nonumber
\end{align}
For the term  $I_1$  we use Sobolev inequalities in Lemma \ref{L2.1} to obtain
\begin{align}\label{G3.4}
I_1\leq&\, C\|(q^\varepsilon,r^\varepsilon,u^\varepsilon,\omega^\varepsilon)\|_{L^3}\|(q^\varepsilon,r^\varepsilon,u^\varepsilon,\omega^\varepsilon)\|_{L^6}\|\nabla (u^\varepsilon,\omega^\varepsilon)\|_{L^2}  \nonumber\\
 \leq&\,  C\|(q^\varepsilon,r^\varepsilon,u^\varepsilon,\omega^\varepsilon)\|_{H^1} \|\nabla (q^\varepsilon,r^\varepsilon,u^\varepsilon,\omega^\varepsilon)\|_{L^2}^2\nonumber\\
 \leq&\,C \delta \|\nabla (q^\varepsilon,r^\varepsilon,u^\varepsilon,\omega^\varepsilon)\|_{L^2}^2.
\end{align}
For $I_2$, note that
$
\frac{1}{\varepsilon}\left|\frac{P'(1+\varepsilon q^\varepsilon)}{1+\varepsilon q^\varepsilon}-P'(1)\right|\leq C|q^\varepsilon| \text{ and } \frac{1}{\varepsilon}\left|\frac{1}{1+\varepsilon r^\varepsilon}-1\right|\leq C|r^\varepsilon|.
$
Thus, we obtain
\begin{align}\label{G3.5}
I_2\leq&\,  C\|(q^\varepsilon, r^\varepsilon)\|_{L^3} \|\nabla (q^\varepsilon,r^\varepsilon)\|_{L^2} \|(u^\varepsilon,\omega^\varepsilon)\|_{L^6}\nonumber\\
\leq&\, C\|(q^\varepsilon,r^\varepsilon)\|_{H^1}  \|\nabla (q^\varepsilon,r^\varepsilon)\|_{L^2}\|\nabla(u^\varepsilon,\omega^\varepsilon)\|_{L^2} \nonumber\\
\leq&\,C\delta \|\nabla (q^\varepsilon,r^\varepsilon,u^\varepsilon,\omega^\varepsilon)\|_{L^2}^2.
\end{align}
For the remaining terms $I_3$ and $I_4$, we use Lemma \ref{L2.1} to deduce that
\begin{align}\label{G3.6}
I_3+I_4\leq&\, C \varepsilon\|q^\varepsilon\|_{L^3} \|\nabla^2 u^\varepsilon\|_{L^2}\|u^\varepsilon\|_{L^6}+C\varepsilon \|(r^\varepsilon,q^\varepsilon)\|_{L^3}\|(u^\varepsilon,\omega^\varepsilon)\|_{L^6}\|u^\varepsilon-\omega^\varepsilon\|_{L^2}  \nonumber\\
\leq&\,C \varepsilon \|(q^\varepsilon,r^\varepsilon)\|_{H^1} \|\nabla(u^\varepsilon,\omega^\varepsilon)\|_{L^2}  (      \|\nabla^2 u^\varepsilon\|_{L^2}+\|u^\varepsilon-\omega^\varepsilon\|_{L^2})\nonumber\\
\leq&\, C\varepsilon \delta  \big(  \|\nabla(u^\varepsilon,\omega^\varepsilon)\|_{L^2}^2+\|u^\varepsilon-\omega^\varepsilon\|_{L^2}^2+\|\nabla^2 u^\varepsilon\|_{L^2}^2\big).
\end{align}
Substituting the bounds \eqref{G3.4}--\eqref{G3.6} into \eqref{G3.3} yields \eqref{G3.2}.
\end{proof}

Next, we derive the higher-order estimate of $(q^\varepsilon, u^\varepsilon, r^\varepsilon, \omega^\varepsilon)$.

\begin{lem}\label{L3.2}
Let $(q^\varepsilon, u^\varepsilon, r^\varepsilon,\omega^\varepsilon)$ be the strong solution to the scaled compressible  Navier--Stokes--Euler system \eqref{I3}--\eqref{I3-1}. Then there exists a positive constant $\eta_2$, independent of $\varepsilon$, such that
\begin{align}\label{G3.7}
&\frac{{\rm d}}{{\rm d}t} \sum_{1\leq |\alpha|\leq 3}\bigg(  \bigg\|\sqrt{\frac{P^\prime(1+\varepsilon q^\varepsilon)}{1+\varepsilon q^\varepsilon}} \partial^\alpha  q^\varepsilon\bigg\|_{L^2}^2+\Big\|\frac{\partial^\alpha r^\varepsilon}{\sqrt{1+\varepsilon r^\varepsilon}}\Big\|_{L^2}^2+\|\partial^\alpha (u^\varepsilon,\omega^\varepsilon)\|_{ L^2}^2 \bigg) \nonumber\\
&\quad
+\eta_2  \sum_{1\leq |\alpha|\leq 3}\big(\|\nabla\partial^\alpha  u^\varepsilon\|_{L^2}^2+\|\partial^\alpha(u^\varepsilon-\omega^\varepsilon)\|_{ L^2}^2 + \|{\rm div}\,\partial^\alpha u^\varepsilon\|_{L^2}^2\big)
\lesssim  {\delta}\|\nabla (q^\varepsilon,u^\varepsilon,r^\varepsilon,\omega^\varepsilon)\|_{H^2}^2.
\end{align}   
\end{lem}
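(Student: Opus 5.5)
We apply $\partial^\alpha$ with $1\le|\alpha|\le3$ to \eqref{I3} and take the $L^2$ inner product of the four equations with the weighted multipliers
\[
\Big(\tfrac{P'(1+\varepsilon q^\varepsilon)}{1+\varepsilon q^\varepsilon}\,\partial^\alpha q^\varepsilon,\ (1+\varepsilon q^\varepsilon)\partial^\alpha u^\varepsilon,\ \tfrac{\partial^\alpha r^\varepsilon}{1+\varepsilon r^\varepsilon},\ (1+\varepsilon r^\varepsilon)\partial^\alpha\omega^\varepsilon\Big).
\]
More precisely, we use a weight on the $q$-equation chosen so that the two $\varepsilon^{-1}$ acoustic terms cancel after integration by parts. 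Writing $g(q):=P'(1+\varepsilon q)/(1+\varepsilon q)$, the momentum equation contains $\varepsilon^{-1}g\nabla\partial^\alpha q$ and the mass equation contains $\varepsilon^{-1}(1+\varepsilon q)\operatorname{div}\partial^\alpha u$. Multiplying the first by $\partial^\alpha u$ and the second by $g\,\partial^\alpha q/(1+\varepsilon q)$ gives a sum equal to $\varepsilon^{-1}\int \operatorname{div}(g\,\partial^\alpha q\,\partial^\alpha u)$ minus $\varepsilon^{-1}\int \nabla g\cdot\partial^\alpha u\,\partial^\alpha q$. Since $\nabla g=\varepsilon g'\nabla q$, the leftover is $O(1)$ and cubic: it is bounded by $\|\nabla q\|_{L^\infty}\|\partial^\alpha q\|\|\partial^\alpha u\|\lesssim\delta\|\nabla(q,u)\|_{H^2}^2$. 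We adjust the weights accordingly, keeping the plain $\|\partial^\alpha(u,\omega)\|^2$ in the energy as in \eqref{G3.7}, and proceed the same way for the pair $(r,\omega)$ with weight $1/(1+\varepsilon r)$. The time derivative of each weight produces $\varepsilon\partial_t q$, which by the mass equation equals $-\varepsilon u\cdot\nabla q-(1+\varepsilon q)\operatorname{div}u$. This is $O(\|\nabla u\|_{H^2}+\varepsilon\|u\|\|\nabla q\|)$ and small in $L^\infty$, so it contributes $\delta\|\nabla(q,u,r,\omega)\|_{H^2}^2$.

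The remaining pieces are handled as follows.
\begin{itemize}
\item \emph{Viscous terms.} Integrating by parts gives $\mu\|\nabla\partial^\alpha u\|^2+(\mu+\lambda)\|\operatorname{div}\partial^\alpha u\|^2$ up to commutators $[\partial^\alpha,(1+\varepsilon q)^{-1}]\Delta u$. Each derivative on the coefficient carries a factor $\varepsilon$, so these commutators are controlled by Lemma \ref{L2.4} by $\delta\|\nabla^2u\|_{H^2}^2$ plus $\delta\|\nabla q\|_{H^2}^2$.
\item \emph{Drag term.} The drag in the $u$ and $\omega$ equations gives $-\int\frac{1+\varepsilon r}{1+\varepsilon q}\partial^\alpha(\omega-u)\cdot\partial^\alpha u+\int\partial^\alpha(u-\omega)\cdot\partial^\alpha\omega$. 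Up to coefficient errors of size $\varepsilon|(q,r)|$ and commutators, this equals $-\|\partial^\alpha(u-\omega)\|^2$. The errors are bounded by $\delta(\|\partial^\alpha(u-\omega)\|^2+\|\nabla(q,r,u,\omega)\|_{H^2}^2)$, where the commutators are estimated as $\varepsilon\|\nabla(q,r)\|_{H^2}\|(u-\omega)\|_{H^3}$-type products.
\item \emph{Convection terms.} For $u\cdot\nabla\partial^\alpha q$ we integrate by parts to get $\frac12\int\operatorname{div}(u\,g)|\partial^\alpha q|^2$. The commutators $[\partial^\alpha,u\cdot\nabla]q$ are estimated via Lemma \ref{L2.4} by $\|\nabla u\|_{L^\infty}\|\nabla^{3}q\|+\|\nabla q\|_{L^\infty}\|\nabla^3u\|$, which gives $\delta\|\nabla(q,u)\|_{H^2}^2$. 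The $\omega$-phase is treated identically.
\end{itemize}

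The main obstacle is the singular commutator $\varepsilon^{-1}[\partial^\alpha,g(\varepsilon q)]\nabla q$, and similarly $\varepsilon^{-1}[\partial^\alpha,\varepsilon q]\operatorname{div}u$ and its $r$-analogues at $|\alpha|=3$. We would expand by Leibniz: every term has at least one derivative on $g(\varepsilon q)$, which produces a factor $\varepsilon$ that cancels the $\varepsilon^{-1}$. This leaves products of the form $\nabla^{k}q\,\nabla^{4-k}q$ with $1\le k\le3$, estimated in $L^2$ by $\|\nabla q\|_{H^2}^2$ via Lemma \ref{L2.1}. Paired against $\partial^\alpha u$, this yields $\delta\|\nabla(q,u)\|_{H^2}^2$. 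The same mechanism handles the Euler phase, which also works for $\gamma\ge1$.

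One caveat concerns the top-order $\omega$ terms, which only have $\|\nabla\omega\|_{H^2}$ on the right side. Indeed, $\|\partial^\alpha\omega\|$ for $|\alpha|=3$ appears only through $\nabla\omega$ in $H^2$, and this is allowed by \eqref{G3.7}. Summing over $\alpha$ and choosing $\eta_2$ as $\min(\mu,1)/2$ after absorbing the $\delta$-multiples of dissipation then gives \eqref{G3.7}.
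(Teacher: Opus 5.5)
Your route is the paper's. You use $\partial^\alpha$-energy estimates with density-dependent weights, and you gain a factor $\varepsilon$ whenever a derivative falls on a coefficient; this tames $\varepsilon^{-1}[\partial^\alpha,g]\nabla q^\varepsilon$, with $g:=P'(1+\varepsilon q^\varepsilon)/(1+\varepsilon q^\varepsilon)$, and its $r$-analogue. You also use the mass equation for $\partial_t$ of the weights, integration by parts in the viscous terms, and relaxation from the drag. The gap is in the weights. You present two different sets of multipliers and then assert the energy of \eqref{G3.7}, but for the Euler block the choice is not cosmetic.

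Suppose the $r$-equation is multiplied by $\partial^\alpha r^\varepsilon/(1+\varepsilon r^\varepsilon)$ against an unweighted $\partial^\alpha\omega^\varepsilon$, which is exactly the energy displayed in \eqref{G3.7}. Then the two $\varepsilon^{-1}$ terms do not cancel. They contribute to $\frac12\frac{\rm d}{{\rm d}t}$ of the energy
\[
-\int_{\mathbb R^3}\frac{r^\varepsilon}{1+\varepsilon r^\varepsilon}\,\partial^\alpha r^\varepsilon\,\operatorname{div}\partial^\alpha\omega^\varepsilon\,{\rm d}x
+\frac1\varepsilon\int_{\mathbb R^3}\nabla\Big(\frac{1}{1+\varepsilon r^\varepsilon}\Big)\partial^\alpha r^\varepsilon\cdot\partial^\alpha\omega^\varepsilon\,{\rm d}x .
\]
The second integral is the harmless $O(|\nabla r^\varepsilon|)$ term you describe. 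For $|\alpha|=3$, however, the first carries four derivatives of $\omega^\varepsilon$, or of $r^\varepsilon$ after an integration by parts. Nothing controls it: $\omega^\varepsilon$ is inviscid, only $\|u^\varepsilon-\omega^\varepsilon\|_{H^3}$ is dissipated, and $r^\varepsilon$ is only controlled in $H^3$.

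In fact \eqref{G3.7} cannot hold with the energy as displayed. Take $q_0^\varepsilon=u_0^\varepsilon=0$, $r_0^\varepsilon=c\phi+aN^{-3}\phi\cos(Nx_1)$ and $\omega_0^\varepsilon=bN^{-3}\phi\sin(Nx_1)e_1$, with a bump $\phi\geq0$ and small constants $c>0$, $ab<0$. At $t=0$ the first integral, coming from $\alpha=(3,0,0)$, is positive and of order $|ab|\,c\,N$. Every other contribution to $\frac{\rm d}{{\rm d}t}$ of the energy, and the right-hand side of \eqref{G3.7}, stays bounded in $N$. The analogous mismatch in the $(q,u)$-block, $\int_{\mathbb R^3} q^\varepsilon g\,\partial^\alpha q^\varepsilon\operatorname{div}\partial^\alpha u^\varepsilon\,{\rm d}x$, is harmless because $\mu\|\nabla\partial^\alpha u^\varepsilon\|_{L^2}^2$ absorbs it.

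The repair is to commit to a genuine symmetrizer. There are two natural choices:
\begin{enumerate}
\item Your first display, $\big(g\,\partial^\alpha q^\varepsilon,(1+\varepsilon q^\varepsilon)\partial^\alpha u^\varepsilon,\partial^\alpha r^\varepsilon/(1+\varepsilon r^\varepsilon),(1+\varepsilon r^\varepsilon)\partial^\alpha\omega^\varepsilon\big)$, is one. It even turns the drag into $-(1+\varepsilon r^\varepsilon)|\partial^\alpha(u^\varepsilon-\omega^\varepsilon)|^2$ plus commutators.
\item Alternatively, keep plain $\partial^\alpha\omega^\varepsilon$ and weight the $r$-equation by $(1+\varepsilon r^\varepsilon)^{-2}$. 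This is what treating $(r,\omega)$ ``the same way'' as your $(q,u)$ construction actually produces, not the weight $1/(1+\varepsilon r^\varepsilon)$ you wrote.
\end{enumerate}
Either way, and likewise with your $q$-weight $P'(1+\varepsilon q^\varepsilon)/(1+\varepsilon q^\varepsilon)^2$, the functional you control is uniformly equivalent to the one in \eqref{G3.7} but not the same. So drop the claim of keeping the energy ``as in \eqref{G3.7}'' and state the lemma for the corrected energy. That is all Theorem \ref{Th1} uses, via \eqref{G3.34}, and with this change your remaining term-by-term bounds go through as in the paper.

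The paper's own proof multiplies by $\big(g\,\partial^\alpha q^\varepsilon,\partial^\alpha u^\varepsilon,\partial^\alpha r^\varepsilon/(1+\varepsilon r^\varepsilon),\partial^\alpha\omega^\varepsilon\big)$. It simply omits the first integral above from $J_1$--$J_6$, so it needs the same repair.
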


\begin{proof}
Applying \(\partial^\alpha\) to \eqref{I3}$_1$--\eqref{I3}$_4$, where \(1\leq |\alpha|\leq3\), we obtain
\begin{equation}\label{G3.8}
\left\{
\begin{aligned}
&\partial_t\partial^\alpha q^\varepsilon
+u^\varepsilon\cdot\nabla\partial^\alpha q^\varepsilon
+\frac{1+\varepsilon q^\varepsilon}{\varepsilon}
\operatorname{div}\partial^\alpha u^\varepsilon
=
-[\partial^\alpha,u^\varepsilon\cdot\nabla]q^\varepsilon
-[\partial^\alpha,q^\varepsilon\operatorname{div}]u^\varepsilon,\\
&\partial_t\partial^\alpha u^\varepsilon
+u^\varepsilon\cdot\nabla\partial^\alpha u^\varepsilon
+\frac1{\varepsilon}
\frac{P'(1+\varepsilon q^\varepsilon)}
     {1+\varepsilon q^\varepsilon}
\nabla\partial^\alpha q^\varepsilon
-\mu\Delta\partial^\alpha u^\varepsilon
-(\mu+\lambda)\nabla\operatorname{div}\partial^\alpha u^\varepsilon
-\partial^\alpha(\omega^\varepsilon-u^\varepsilon)
\\
&\quad
=
-[\partial^\alpha,u^\varepsilon\cdot\nabla]u^\varepsilon
-\frac1{\varepsilon}
\Big[
\partial^\alpha,
\frac{P'(1+\varepsilon q^\varepsilon)}
     {1+\varepsilon q^\varepsilon}
\Big]\nabla q^\varepsilon+\partial^\alpha\Big(\frac{\varepsilon(r^\varepsilon-q^\varepsilon)}
      {1+\varepsilon q^\varepsilon}
(\omega^\varepsilon-u^\varepsilon)
\Big)
\\
&\qquad\,
-\partial^\alpha\Big(
\frac{\mu\varepsilon q^\varepsilon}{1+\varepsilon q^\varepsilon}
\Delta u^\varepsilon
+\frac{(\mu+\lambda)\varepsilon q^\varepsilon}
      {1+\varepsilon q^\varepsilon}
\nabla\operatorname{div}u^\varepsilon
\Big),\\ 
&\partial_t\partial^\alpha r^\varepsilon
+\omega^\varepsilon\cdot\nabla\partial^\alpha r^\varepsilon
+\frac{1+\varepsilon r^\varepsilon}{\varepsilon}
\operatorname{div}\partial^\alpha\omega^\varepsilon
=-[\partial^\alpha,\omega^\varepsilon\cdot\nabla]r^\varepsilon
-[\partial^\alpha,r^\varepsilon\operatorname{div}]\omega^\varepsilon,
\\ 
&\partial_t\partial^\alpha\omega^\varepsilon
+\omega^\varepsilon\cdot\nabla\partial^\alpha\omega^\varepsilon
+\frac1{\varepsilon}
\frac1{1+\varepsilon r^\varepsilon}
\nabla\partial^\alpha r^\varepsilon
-\partial^\alpha(u^\varepsilon-\omega^\varepsilon)\\
&\quad=
-[\partial^\alpha,\omega^\varepsilon\cdot\nabla]\omega^\varepsilon
-\frac1{\varepsilon}
\Big[
\partial^\alpha,
\frac1{1+\varepsilon r^\varepsilon}
\Big]\nabla r^\varepsilon,
\end{aligned}
\right.
\end{equation}
where we have used the following identities:
\begin{align*}
\frac{1}{\varepsilon}    [\partial^\alpha ,(1+\varepsilon q^\varepsilon){\rm div}] u^\varepsilon= [\partial^\alpha ,  q^\varepsilon {\rm div}] u^\varepsilon, \quad
\frac{1}{\varepsilon}    [\partial^\alpha ,(1+\varepsilon r^\varepsilon){\rm div} ]  \omega^\varepsilon= [\partial^\alpha ,  r^\varepsilon {\rm div}]\omega^\varepsilon.
\end{align*}

Taking inner product of \eqref{G3.8}$_1$--\eqref{G3.8}$_4$ with
$\big(\frac{P'(1+\varepsilon q^\varepsilon)}
     {1+\varepsilon q^\varepsilon}
\partial^\alpha q^\varepsilon, \partial^\alpha u^\varepsilon, \frac{\partial^\alpha r^\varepsilon}{1+\varepsilon r^\varepsilon},\partial^\alpha\omega^\varepsilon\big)$ respectively,  we obtain

\begin{align}\label{G3.9}
&\frac12\frac{{\rm d}}{{\rm d}t}
\bigg(
\Big\|
\sqrt{\frac{P'(1+\varepsilon q^\varepsilon)}
     {1+\varepsilon q^\varepsilon}}
\partial^\alpha q^\varepsilon
\Big\|_{L^2}^2
+\|\partial^\alpha u^\varepsilon\|_{L^2}^2
+\Big\|
\frac{\partial^\alpha r^\varepsilon}{\sqrt{1+\varepsilon r^\varepsilon}}
\Big\|_{L^2}^2
+\|\partial^\alpha\omega^\varepsilon\|_{L^2}^2
\bigg)
\nonumber\\
&\quad +\mu\|\nabla\partial^\alpha u^\varepsilon\|_{L^2}^2
+(\mu+\lambda)\|\operatorname{div}\partial^\alpha u^\varepsilon\|_{L^2}^2
+\|\partial^\alpha(u^\varepsilon-\omega^\varepsilon)\|_{L^2}^2=\sum_{i=1}^6J_i,
\end{align}
where 
\begin{align}
&J_1=\frac12\int_{\mathbb R^3}
\partial_t\Big(
\frac{P'(1+\varepsilon q^\varepsilon)}
     {1+\varepsilon q^\varepsilon}
\Big)
|\partial^\alpha q^\varepsilon|^2 {\rm d}x
+
\frac12\int_{\mathbb R^3}
\partial_t\Big(
\frac{1}{1+\varepsilon r^\varepsilon}
\Big)
|\partial^\alpha r^\varepsilon|^2 {\rm d}x,
\nonumber\\
&J_2=-
\int_{\mathbb R^3}
\frac{P'(1+\varepsilon q^\varepsilon)}
     {1+\varepsilon q^\varepsilon}
\Big(
[\partial^\alpha,u^\varepsilon\cdot\nabla]q^\varepsilon
+
[\partial^\alpha,q^\varepsilon\operatorname{div}]u^\varepsilon
\Big)
\partial^\alpha q^\varepsilon{\rm d}x-\int_{\mathbb{R}^3}[\partial^\alpha,u^\varepsilon\cdot \nabla ]u^\varepsilon\cdot \partial^\alpha u^\varepsilon {\rm d}x
\nonumber\\
&\qquad-
\int_{\mathbb R^3}
\frac{1}{1+\varepsilon r^\varepsilon}
\Big(
[\partial^\alpha,\omega^\varepsilon\cdot\nabla]r^\varepsilon
+
[\partial^\alpha,r^\varepsilon\operatorname{div}]\omega^\varepsilon
\Big)
\partial^\alpha r^\varepsilon{\rm d}x-\int_{\mathbb{R}^3}[\partial^\alpha,\omega^\varepsilon\cdot\nabla]\omega^\varepsilon\cdot \partial^\alpha \omega^\varepsilon {\rm d}x,
\nonumber\\
&J_3=
\frac12\int_{\mathbb R^3}
\operatorname{div}\Big(
\frac{P'(1+\varepsilon q^\varepsilon)}
     {1+\varepsilon q^\varepsilon}
u^\varepsilon
\Big)
|\partial^\alpha q^\varepsilon|^2{\rm d}x
+
\frac12\int_{\mathbb R^3}
\operatorname{div}\Big(
\frac{\omega^\varepsilon}{1+\varepsilon r^\varepsilon}
\Big)
|\partial^\alpha r^\varepsilon|^2 {\rm d}x
\nonumber\\
&\qquad+\frac12\int_{\mathbb R^3}
|\partial^\alpha u^\varepsilon|^2
\operatorname{div}u^\varepsilon {\rm d}x
+
\frac12\int_{\mathbb R^3}
|\partial^\alpha\omega^\varepsilon|^2
\operatorname{div}\omega^\varepsilon {\rm d}x,\nonumber\\
&J_4=
\frac1{\varepsilon}\int_{\mathbb R^3}
\nabla\Big(
\frac{P'(1+\varepsilon q^\varepsilon)}
     {1+\varepsilon q^\varepsilon}
\Big)
\partial^\alpha q^\varepsilon\cdot\partial^\alpha u^\varepsilon {\rm d}x
+
\frac1{\varepsilon}\int_{\mathbb R^3}
\nabla\Big(
\frac{1}{1+\varepsilon r^\varepsilon}
\Big)
\partial^\alpha r^\varepsilon\cdot\partial^\alpha\omega^\varepsilon {\rm d}x
\nonumber\\
&\qquad-
\frac1{\varepsilon}\int_{\mathbb R^3}
\Big[
\partial^\alpha,
\frac{P'(1+\varepsilon q^\varepsilon)}
     {1+\varepsilon q^\varepsilon}
\Big]\nabla q^\varepsilon
\cdot\partial^\alpha u^\varepsilon {\rm d}x
-\frac1{\varepsilon}
\int_{\mathbb R^3}
\Big[
\partial^\alpha,
\frac1{1+\varepsilon r^\varepsilon}
\Big]\nabla r^\varepsilon
\cdot\partial^\alpha\omega^\varepsilon {\rm d}x,
\nonumber\\
&J_5=
\int_{\mathbb R^3}
\partial^\alpha\Big(
\frac{\varepsilon(r^\varepsilon-q^\varepsilon)}
     {1+\varepsilon q^\varepsilon}
(\omega^\varepsilon-u^\varepsilon)
\Big)\cdot\partial^\alpha u^\varepsilon {\rm d}x,\nonumber\\
&J_6=
\varepsilon\int_{\mathbb R^3}
\partial^\alpha\Big(
\frac{\mu q^\varepsilon}
     {1+\varepsilon q^\varepsilon}
\Delta u^\varepsilon
+
\frac{(\mu+\lambda)  q^\varepsilon}
     {1+\varepsilon q^\varepsilon}
\nabla\operatorname{div}u^\varepsilon
\Big)\cdot\partial^\alpha u^\varepsilon {\rm d}x.\nonumber
\end{align}

We now estimate $J_1-J_6$ term by term. For $J_1$, note that
\begin{align}\label{G3.10-1}
\left|\partial_t\left(\frac{P'(1+\varepsilon q^\varepsilon)}{1+\varepsilon q^\varepsilon}\right)\right|\leq C\varepsilon|\partial_tq^\varepsilon|,\quad  \left|\partial_t\left(\frac{1}{1+\varepsilon r^\varepsilon}\right)\right|\leq C\varepsilon|\partial_tr^\varepsilon|,
\end{align}
and
\begin{align}\label{G3.10}
|\partial_tq^\varepsilon|+|\partial_tr^\varepsilon |\lesssim&\, \frac{1}{\varepsilon}(1+\|(q^\varepsilon,r^\varepsilon)\|_{L^\infty})\|\nabla (u^\varepsilon,\omega^\varepsilon)\|_{L^\infty}+\|(u^\varepsilon,\omega^\varepsilon)\|_{L^\infty} \|\nabla (q^\varepsilon,r^\varepsilon)\|_{L^\infty}\nonumber\\
\lesssim&\, \Big(\frac{1}{\varepsilon}+1\Big) \|\nabla(q^\varepsilon,u^\varepsilon,r^\varepsilon,\omega^\varepsilon)\|_{H^2}\lesssim \frac{\delta}{\varepsilon}.
\end{align}
where we have used  the mass equations \eqref{I3}$_1$ and \eqref{I3}$_3$ in \eqref{G3.10}. Then substituting \eqref{G3.10-1} and \eqref{G3.10} into $J_1$, and using Lemma \ref{L2.1}, we deduce that
\begin{align}\label{G3.11}
J_{1}\lesssim&\,  \varepsilon \|\partial_{t}q^\varepsilon\|_{L^\infty}\|\partial^\alpha q^\varepsilon\|_{L^2}^2+\varepsilon \|\partial_{t}r
^\varepsilon\|_{L^\infty}\|\partial^\alpha r^\varepsilon\|_{L^2}^2  \nonumber\\
\lesssim&\, \delta \|\nabla (q^\varepsilon,r^\varepsilon)\|_{H^2}^2.
\end{align}
For $J_2$ involving commutators,  we use Lemmas \ref{L2.1} and \ref{L2.4} to obtain
\begin{align}\label{G3.12}
J_2\lesssim&\,  (1+\|q^\varepsilon\|_{L^\infty})(\|[\partial^\alpha,u^\varepsilon\cdot\nabla]q^\varepsilon\|_{L^2}+\|[\partial^\alpha,q^\varepsilon {\rm div}]u^\varepsilon\|_{L^2})\|\partial^\alpha q^\varepsilon\|_{L^2} \nonumber\\
&+\|\nabla (q^\varepsilon,u^\varepsilon,r^\varepsilon,\omega^\varepsilon)\|_{L^\infty}(1+ \|(q^\varepsilon,u^\varepsilon,r^\varepsilon,\omega^\varepsilon)\|_{L^\infty}) \|(\partial^\alpha q^\varepsilon,\partial^\alpha r^\varepsilon)\|_{L^2}^2\nonumber\\
&+(\|[\partial^\alpha,\omega^\varepsilon\cdot\nabla]r^\varepsilon\|_{L^2}+\|[\partial^\alpha,r^\varepsilon {\rm div}]\omega^\varepsilon\|_{L^2})\|\partial^\alpha r^\varepsilon\|_{L^2}\nonumber\\
&+\|[\partial^\alpha,u^\varepsilon\cdot\nabla u^\varepsilon]\|_{L^2}\|\partial^\alpha u^\varepsilon\|_{L^2}+\|[\partial^\alpha,\omega^\varepsilon\cdot\nabla \omega^\varepsilon]\|_{L^2}\|\partial^\alpha \omega^\varepsilon\|_{L^2}\nonumber\\
\lesssim&\, (1+\|(q^\varepsilon,u^\varepsilon,r^\varepsilon,\omega^\varepsilon)\|_{H^3})\|(q^\varepsilon,u^\varepsilon,r^\varepsilon,\omega^\varepsilon)\|_{H^3} \|\nabla (q^\varepsilon,u^\varepsilon,r^\varepsilon,\omega^\varepsilon)\|_{H^2}^2\nonumber\\
\lesssim&\, \delta  \|\nabla (q^\varepsilon,u^\varepsilon,r^\varepsilon,\omega^\varepsilon)\|_{H^2}^2.
\end{align}

By a direct computation, we deduce that
\begin{align}\label{G3.14}
J_3+J_5\lesssim &\, \sum_{1\leq |\alpha|\leq3}\|\nabla(u^\varepsilon,\omega^\varepsilon)\|_{L^\infty} \|(\partial^\alpha u^\varepsilon,\partial^\alpha\omega^\varepsilon)\|_{L^2}^2\nonumber\\
&+ \varepsilon\|\nabla u^\varepsilon\|_{H^2}\|\nabla (\omega^\varepsilon-u^\varepsilon)\|_{H^2} \Big\| \nabla\Big(\frac{r^\varepsilon-q^\varepsilon}{1+\varepsilon q^\varepsilon}\Big)    \Big\|_{H^2} \nonumber\\
\lesssim&\, \delta \|\nabla (q^\varepsilon,u^\varepsilon,r^\varepsilon,\omega^\varepsilon)\|_{H^2}^2.
\end{align}

For  \(J_{4}\) involving singular factor $\frac1\varepsilon,$ we observe that 
$$
\frac{1}{\varepsilon}\left|\nabla\left(\frac{P^\prime(1+\varepsilon q^\varepsilon)}{1+\varepsilon q^\varepsilon}\right)\right|+\frac{1}{\varepsilon}\left|\nabla \left(\frac{1}{1+\varepsilon r^\varepsilon}\right)\right|\leq C|\nabla(q^\varepsilon,r^\varepsilon)|.$$
Then it holds that
\begin{align}\label{G3.13}
J_{4}\lesssim&\,   \|\nabla (q^\varepsilon,r^\varepsilon)\|_{L^\infty} \|(\partial^\alpha q^\varepsilon,\partial^\alpha r^\varepsilon)\|_{L^2} \|(\partial^\alpha u^\varepsilon,\partial^\alpha\omega^\varepsilon)\|_{L^2} +\|\nabla (q^\varepsilon,r^\varepsilon)\|_{H^2}^2\|(u^\varepsilon,\omega^\varepsilon)\|_{H^3}  \nonumber\\
\lesssim&\, \delta \|\nabla(q^\varepsilon,u^\varepsilon,r^\varepsilon,\omega^\varepsilon)\|_{H^2}^2.
\end{align}
For \(J_{6}\) involving viscous term, we split it into  
 \(J_{6}=J_{6,1}+J_{6,2}\), where
\begin{align*}
J_{6,1}
&:=-\varepsilon\mu
\int_{\mathbb R^3}
\partial^\alpha\Big(
\frac{q^\varepsilon}{1+\varepsilon q^\varepsilon}
\Delta u^\varepsilon
\Big)\cdot \partial^\alpha u^\varepsilon {\rm d}x,    \\
J_{6,2}
&:=-\varepsilon(\mu+\lambda)
\int_{\mathbb R^3}
\partial^\alpha\Big(
\frac{q^\varepsilon}{1+\varepsilon q^\varepsilon}
\nabla\operatorname{div}u^\varepsilon
\Big)\cdot \partial^\alpha u^\varepsilon {\rm d}x .    
\end{align*}
Note that
\begin{align*}
\Big\|
\frac{q^\varepsilon}{1+\varepsilon q^\varepsilon}
\Big\|_{H^3}
+
\Big\|
\nabla\Big(
\frac{q^\varepsilon}{1+\varepsilon q^\varepsilon}
\Big)
\Big\|_{L^3\cap L^\infty}
\lesssim
\|q^\varepsilon\|_{H^3}
\lesssim \delta .    
\end{align*}
Then integrating by parts, we deduce that
\begin{align}
J_{6,1}
=&\varepsilon\mu
\int_{\mathbb R^3}
\frac{q^\varepsilon}{1+\varepsilon q^\varepsilon}
|\nabla\partial^\alpha u^\varepsilon|^2{\rm d}x
+\varepsilon\mu
\int_{\mathbb R^3}
\nabla\Big(
\frac{q^\varepsilon}{1+\varepsilon q^\varepsilon}
\Big)
\cdot\nabla\partial^\alpha u^\varepsilon\,\cdot
\partial^\alpha u^\varepsilon {\rm d}x\nonumber
\\
&-\varepsilon\mu
\int_{\mathbb R^3}
\Big[
\partial^\alpha,
\frac{q^\varepsilon}{1+\varepsilon q^\varepsilon}
\Big]\Delta u^\varepsilon
\cdot\partial^\alpha u^\varepsilon {\rm d}x \nonumber\\
\lesssim&
\varepsilon\delta
\|\nabla\partial^\alpha u^\varepsilon\|_{L^2}^2
+
\varepsilon
\Big\|
\nabla\Big(
\frac{q^\varepsilon}{1+\varepsilon q^\varepsilon}
\Big)
\Big\|_{L^3}
\|\nabla\partial^\alpha u^\varepsilon\|_{L^2}
\|\partial^\alpha u^\varepsilon\|_{L^6}\nonumber\\
&+\varepsilon
\Big\|
\Big[
\partial^\alpha,
\frac{q^\varepsilon}{1+\varepsilon q^\varepsilon}
\Big]\Delta u^\varepsilon
\Big\|_{L^{2}}
\|\partial^\alpha u^\varepsilon\|_{L^2}\nonumber \\
\lesssim&
\varepsilon\delta
\|\nabla u^\varepsilon\|_{H^3}^2 .\label{G3.15}
\end{align}
%Similarly, 
%\begin{align*}
%J_{14}^{2}
%=&\,-\varepsilon(\mu+\lambda)
%\int_{\mathbb R^3}
%\frac{q^\varepsilon}{1+\varepsilon q^\varepsilon}
%|\operatorname{div}\partial^\alpha u^\varepsilon|^2{\rm d}x
%-\varepsilon(\mu+\lambda)
%\int_{\mathbb R^3}
%\nabla\Big(
%\frac{q^\varepsilon}{1+\varepsilon q^\varepsilon}
%\Big)\cdot
%\partial^\alpha u^\varepsilon\,
%\operatorname{div}\partial^\alpha u^\varepsilon{\rm d}x
%\\
%&+\varepsilon(\mu+\lambda)
%\int_{\mathbb R^3}
%\Big[
%\partial^\alpha,
%\frac{q^\varepsilon}{1+\varepsilon q^\varepsilon}
%\nabla\operatorname{div}\Big]u^\varepsilon
%\cdot\partial^\alpha u^\varepsilon{\rm d}x .
%\end{align*}
Similar to \eqref{G3.15}, we deduce that
\begin{align}\label{G3.16}
|J_{6.2}|
\lesssim
\varepsilon\delta
\|\nabla u^\varepsilon\|_{H^3}^2.     
\end{align}
Combining \eqref{G3.15} and \eqref{G3.16} yields
\begin{align}\label{G3.17}
|J_{6}|
\lesssim \delta
\|\nabla u^\varepsilon\|_{H^3}^2 .    
\end{align}

Substituting \eqref{G3.11}--\eqref{G3.13} and \eqref{G3.17} into \eqref{G3.9}, we obtain the desired \eqref{G3.7}. The proof of Lemma \ref{L3.2} is complete.
\end{proof}

Finally, we recover the dissipation of the density fluctuations \(q^\varepsilon\) and \(r^\varepsilon\) via the interactive acoustic energy.
\begin{lem}\label{L3.3}
Let $(q^\varepsilon, u^\varepsilon, r^\varepsilon,\omega^\varepsilon)$ be the strong solution to the Cauchy problem \eqref{I3}--\eqref{I3-1}. There exists a positive constant $\eta_3$, independent of $\varepsilon$, such that
\begin{align}\label{G3.18}
&\varepsilon\frac{\rm d}{{\rm d}t}
\sum_{|\alpha|\leq2}
\int_{\mathbb R^3}
\big(
\partial^\alpha u^\varepsilon\cdot\nabla\partial^\alpha q^\varepsilon
+
\partial^\alpha\omega^\varepsilon\cdot\nabla\partial^\alpha r^\varepsilon
\big) {\rm d}x\nonumber\\
& \quad 
+\eta_3\|\nabla(q^\varepsilon,r^\varepsilon)\|_{H^2}^2
\lesssim
\|\nabla u^\varepsilon\|_{H^3}^2
+
\|u^\varepsilon-\omega^\varepsilon\|_{H^3}^2.
\end{align}
\end{lem}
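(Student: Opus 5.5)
We apply $\partial^\alpha$ with $|\alpha|\leq 2$ to the momentum equations \eqref{I3}$_2$ and \eqref{I3}$_4$, take the $L^2$ inner product with $\varepsilon\nabla\partial^\alpha q^\varepsilon$ and $\varepsilon\nabla\partial^\alpha r^\varepsilon$ respectively, and sum over $\alpha$. The multiplier $\varepsilon$ is what makes the estimate uniform. The singular pressure terms then become
\[
\int\frac{P'(1+\varepsilon q^\varepsilon)}{1+\varepsilon q^\varepsilon}|\nabla\partial^\alpha q^\varepsilon|^2\,{\rm d}x+\int\frac{|\nabla\partial^\alpha r^\varepsilon|^2}{1+\varepsilon r^\varepsilon}\,{\rm d}x,
\]
plus commutator terms. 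Since $\tfrac12\le 1+\varepsilon q^\varepsilon,1+\varepsilon r^\varepsilon\le\tfrac32$, the main part is bounded below by $c\|\nabla\partial^\alpha(q^\varepsilon,r^\varepsilon)\|_{L^2}^2$. The commutators $\varepsilon^{-1}[\partial^\alpha,P'(1+\varepsilon q^\varepsilon)/(1+\varepsilon q^\varepsilon)]\nabla q^\varepsilon$ gain a factor $\varepsilon$ from every derivative falling on the coefficient, so they are $O(\delta)\|\nabla(q^\varepsilon,r^\varepsilon)\|_{H^2}$ by Lemmas \ref{L2.1} and \ref{L2.4}. After multiplying by $\varepsilon$ they contribute at most $\delta\|\nabla(q^\varepsilon,r^\varepsilon)\|_{H^2}^2$, which is absorbed.

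For the time derivative we write
\[
\varepsilon\int\partial_t\partial^\alpha u^\varepsilon\cdot\nabla\partial^\alpha q^\varepsilon\,{\rm d}x=\varepsilon\frac{\rm d}{{\rm d}t}\int\partial^\alpha u^\varepsilon\cdot\nabla\partial^\alpha q^\varepsilon\,{\rm d}x+\varepsilon\int{\rm div}\,\partial^\alpha u^\varepsilon\,\partial_t\partial^\alpha q^\varepsilon\,{\rm d}x .
\]
We substitute $\partial_t q^\varepsilon$ from \eqref{I3}$_1$. The singular part $\varepsilon\cdot\varepsilon^{-1}(1+\varepsilon q^\varepsilon){\rm div}\,\partial^\alpha u^\varepsilon$ gives $\|{\rm div}\,\partial^\alpha u^\varepsilon\|_{L^2}^2$ plus cubic terms, and this is bounded by $\|\nabla u^\varepsilon\|_{H^3}^2$. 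Likewise, the $\omega$–$r$ pair produces $\|{\rm div}\,\partial^\alpha\omega^\varepsilon\|_{L^2}^2$. This is the delicate point, because $\omega^\varepsilon$ has no viscous dissipation. We handle it by writing $\nabla\omega^\varepsilon=\nabla u^\varepsilon-\nabla(u^\varepsilon-\omega^\varepsilon)$ with $|\alpha|+1\le3$, which gives the bound $\|\nabla u^\varepsilon\|_{H^2}^2+\|u^\varepsilon-\omega^\varepsilon\|_{H^3}^2$. This is exactly why the right-hand side of \eqref{G3.18} contains $\|u^\varepsilon-\omega^\varepsilon\|_{H^3}^2$. The convective parts $\varepsilon\,{\rm div}\,\partial^\alpha u^\varepsilon\,\partial^\alpha(u^\varepsilon\cdot\nabla q^\varepsilon)$ are cubic and are bounded by $\delta$ times the dissipation.

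The remaining linear terms are handled with Young's inequality, using $\varepsilon<1$:
\begin{itemize}
\item the viscous terms give $\varepsilon|\int\Delta\partial^\alpha u^\varepsilon\cdot\nabla\partial^\alpha q^\varepsilon\,{\rm d}x|\le\eta\|\nabla\partial^\alpha q^\varepsilon\|_{L^2}^2+C_\eta\|\nabla u^\varepsilon\|_{H^3}^2$;
\item the drag terms $\pm\varepsilon\int\partial^\alpha(u^\varepsilon-\omega^\varepsilon)\cdot\nabla\partial^\alpha(q^\varepsilon,r^\varepsilon)\,{\rm d}x$ are bounded by $\eta\|\nabla(q^\varepsilon,r^\varepsilon)\|_{H^2}^2+C_\eta\|u^\varepsilon-\omega^\varepsilon\|_{H^2}^2$.
\end{itemize}
The nonlinear remainders are $\varepsilon\,\partial^\alpha(u^\varepsilon\cdot\nabla u^\varepsilon)$, the viscous factors $\varepsilon q^\varepsilon/(1+\varepsilon q^\varepsilon)$, and the drag factor $\varepsilon(r^\varepsilon-q^\varepsilon)/(1+\varepsilon q^\varepsilon)$. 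Each is bounded by $\delta(\|\nabla(q^\varepsilon,u^\varepsilon,r^\varepsilon,\omega^\varepsilon)\|_{H^2}^2+\|\nabla u^\varepsilon\|_{H^3}^2)$. Here $\|\nabla\omega^\varepsilon\|_{H^2}$ is again replaced by the $u^\varepsilon$ and relative-velocity norms, and $\|\nabla u^\varepsilon\|_{L^2}$ at the lowest order is part of $\|\nabla u^\varepsilon\|_{H^3}$.

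Choosing $\eta$ and $\delta$ small and summing over $|\alpha|\le2$ yields \eqref{G3.18}. The main obstacle is the bookkeeping of powers of $\varepsilon$ in the singular terms. The argument relies on the exact cancellation pattern above: after multiplication by $\varepsilon$, the terms $\varepsilon\cdot\varepsilon^{-1}$ become $O(1)$, and the $\varepsilon^{-1}$ commutators gain an $\varepsilon$ from each derivative of the coefficient. With these two facts, no step ever leaves a negative power of $\varepsilon$.
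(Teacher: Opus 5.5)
Your proposal is correct and follows essentially the same route as the paper. Both apply $\partial^\alpha$ to the two momentum equations, pair with $\varepsilon\nabla\partial^\alpha q^\varepsilon$ and $\varepsilon\nabla\partial^\alpha r^\varepsilon$, and convert the time derivative into $\varepsilon\frac{\rm d}{{\rm d}t}\int\partial^\alpha u^\varepsilon\cdot\nabla\partial^\alpha q^\varepsilon\,{\rm d}x$ (and its $\omega$--$r$ analogue) plus a divergence term controlled through the mass equations. The remaining terms are absorbed by Young's inequality and the smallness of $\delta$, with $\|\nabla\omega^\varepsilon\|_{H^2}$ traded for $\|\nabla u^\varepsilon\|_{H^2}+\|u^\varepsilon-\omega^\varepsilon\|_{H^3}$; this is why the right-hand side carries the $H^3$ norm of the relative velocity.

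The only cosmetic difference is in the pressure term. The paper freezes the coefficient at $P'(1)$ and treats $\big(\frac{P'(1+\varepsilon q^\varepsilon)}{1+\varepsilon q^\varepsilon}-P'(1)\big)\nabla q^\varepsilon$ as a perturbation of size $\delta\|\nabla q^\varepsilon\|_{H^2}^2$. You instead keep the variable coefficient and estimate the commutator, and both work equally well.
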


\begin{proof}
We first recover the dissipation of \(q^\varepsilon\). Applying $\partial^\alpha$ with $|\alpha|\leq 2$ to \eqref{I3}$_2$, we have
\begin{align}\label{G3.19}
P'(1)\|\nabla\partial^\alpha q^\varepsilon\|_{L^2}^2
=&
-\varepsilon
\int_{\mathbb R^3}
\nabla\partial^\alpha q^\varepsilon\cdot
\partial^\alpha\partial_t u^\varepsilon {\rm d}x
-\varepsilon
\int_{\mathbb R^3}
\nabla\partial^\alpha q^\varepsilon\cdot
\partial^\alpha(u^\varepsilon\cdot\nabla u^\varepsilon) {\rm d}x
\nonumber\\
&+\varepsilon
\int_{\mathbb R^3}
\nabla\partial^\alpha q^\varepsilon\cdot
\partial^\alpha\Big(
\frac{\mu}{1+\varepsilon q^\varepsilon}\Delta u^\varepsilon
+
\frac{\mu+\lambda}{1+\varepsilon q^\varepsilon}
\nabla\operatorname{div}u^\varepsilon
\Big) {\rm d}x
\nonumber\\
&-
\int_{\mathbb R^3}
\nabla\partial^\alpha q^\varepsilon\cdot
\partial^\alpha\bigg(
\Big(
\frac{P'(1+\varepsilon q^\varepsilon)}
     {1+\varepsilon q^\varepsilon}
-P'(1)
\Big)\nabla q^\varepsilon
\bigg) {\rm d}x\nonumber\\
&+\varepsilon
\int_{\mathbb R^3}
\nabla\partial^\alpha q^\varepsilon\cdot
\partial^\alpha\Big(
\frac{1+\varepsilon r^\varepsilon}
     {1+\varepsilon q^\varepsilon}
(\omega^\varepsilon-u^\varepsilon)
\Big) {\rm d}x
\equiv:\sum_{j=1}^{5}K_j.
\end{align}
For the term \(K_1\), we use the mass equation \eqref{I3}$_1$ to obtain
\begin{align}\label{G3.20}
K_1
=&-\varepsilon\frac{\rm d}{{\rm d}t}
\int_{\mathbb R^3}
\nabla\partial^\alpha q^\varepsilon\cdot
\partial^\alpha u^\varepsilon {\rm d}x
+
\varepsilon
\int_{\mathbb R^3}
\nabla\partial^\alpha\partial_t q^\varepsilon\cdot
\partial^\alpha u^\varepsilon {\rm d}x
\nonumber\\
=&-\varepsilon\frac{\rm d}{{\rm d}t}
\int_{\mathbb R^3}
\nabla\partial^\alpha q^\varepsilon\cdot
\partial^\alpha u^\varepsilon{\rm d}x+
\int_{\mathbb R^3}
\partial^\alpha\Big(
(1+\varepsilon q^\varepsilon)\operatorname{div}u^\varepsilon
+
\varepsilon u^\varepsilon\cdot\nabla q^\varepsilon
\Big)
\operatorname{div}\partial^\alpha u^\varepsilon\,{\rm d}x
\nonumber\\
\leq&
-\varepsilon\frac{\rm d}{{\rm d}t}
\int_{\mathbb R^3}
\nabla\partial^\alpha q^\varepsilon\cdot
\partial^\alpha u^\varepsilon\,{\rm d}x
+
C\|\nabla u^\varepsilon\|_{H^2}^2 .
\end{align}
For the remaining terms $K_2,\dots, K_5$, we get by a direct calculation that
\begin{align}\label{G3.21}
K_2
\leq&\,
\frac{1}{5}\|\nabla\partial^\alpha q^\varepsilon\|_{L^2}^2
+
C\|u^\varepsilon\cdot\nabla u^\varepsilon\|_{H^2}^2\leq
\frac{1}{5}\|\nabla\partial^\alpha q^\varepsilon\|_{L^2}^2
+
C\delta^2\|\nabla u^\varepsilon\|_{H^2}^2 ,
\\
\label{G3.22}
K_3
\leq&\,
\frac{1}{5}\|\nabla\partial^\alpha q^\varepsilon\|_{L^2}^2
+
C\varepsilon^2\|\nabla u^\varepsilon\|_{H^3}^2 \big(1+\|\nabla q^\varepsilon\|_{H^2}^2\big)  \leq
\frac{1}{5}\|\nabla\partial^\alpha q^\varepsilon\|_{L^2}^2
+
C\varepsilon^2 \|\nabla u^\varepsilon\|_{H^3}^2  ,
\\
\label{G3.24}
K_4
\leq&\,
C\varepsilon\|q^\varepsilon\|_{H^3}
\|\nabla q^\varepsilon\|_{H^2}
\|\nabla\partial^\alpha q^\varepsilon\|_{L^2}
\leq
C\delta\|\nabla q^\varepsilon\|_{H^2}^2,
\\
\label{G3.23}
K_5
\leq&\,
\frac{1}{5}\|\nabla\partial^\alpha q^\varepsilon\|_{L^2}^2
+
C\varepsilon^2\|(q^\varepsilon,r^\varepsilon)\|_{H^3}^2 \|u^\varepsilon-\omega^\varepsilon\|_{H^2}^2\leq
\frac{1}{5}\|\nabla\partial^\alpha q^\varepsilon\|_{L^2}^2
+
C\varepsilon^2 \delta^2 \|u^\varepsilon-\omega^\varepsilon\|_{H^2}^2.
\end{align}
Substituting the estimates \eqref{G3.20}--\eqref{G3.23} into \eqref{G3.19} and summing over $|\alpha|\leq 2$, we deduce
\begin{align}\label{G3.25}
&\varepsilon\frac{\rm d}{{\rm d}t}
\sum_{|\alpha|\leq2}
\int_{\mathbb R^3}
\partial^\alpha u^\varepsilon\cdot
\nabla\partial^\alpha q^\varepsilon {\rm d}x
+
\eta_{4}\|\nabla q^\varepsilon\|_{H^2}^2
 \lesssim
\|\nabla u^\varepsilon\|_{H^3}^2
+
\|u^\varepsilon-\omega^\varepsilon\|_{H^2}^2,
\end{align}
for some constant $\eta_{4}>0$. 

Similarly, for the dissipation of \(r^\varepsilon\), we use \eqref{I3}$_4$ to obtain 
\begin{align}\label{G3.31}
&\varepsilon\frac{\rm d}{{\rm d}t}
\sum_{|\alpha|\leq2}
\int_{\mathbb R^3}
\partial^\alpha\omega^\varepsilon\cdot
\nabla\partial^\alpha r^\varepsilon {\rm d}x
+
\eta_{5}\|\nabla r^\varepsilon\|_{H^2}^2\lesssim
\|\nabla u^\varepsilon\|_{H^3}^2
+
\|u^\varepsilon-\omega^\varepsilon\|_{H^3}^2,
\end{align}
for some constant $\eta_{5}>0$.

Putting \eqref{G3.25} and \eqref{G3.31} together yields \eqref{G3.18}.  The proof of Lemma \ref{L3.3} is complete.
\end{proof}

\subsection{Proof of Theorem \ref{Th1}}
Now we prove the
existence, uniqueness and uniform-in-$\varepsilon$ estimates of global strong solution $(q^\varepsilon,u^\varepsilon,r^\varepsilon,\omega^\varepsilon)$ to the scaled compressible Navier--Stokes--Euler system \eqref{I3}--\eqref{I3-1}.
%\begin{proof}%[Proof of Theorem \ref{Th1}]

We define the energy functional $\mathcal{X}_{\varepsilon}(t)$ by
\begin{align}\label{G3.32}
\mathcal X_\varepsilon(t)
:=&\,P'(1)\|q^\varepsilon(t)\|_{L^2}^2
+\|(u^\varepsilon,r^\varepsilon,\omega^\varepsilon)(t)\|_{L^2}^2
\nonumber\\
&+\sum_{1\leq|\alpha|\leq3}
\bigg(
\bigg\|
\sqrt{\frac{P'(1+\varepsilon q^\varepsilon)}
     {1+\varepsilon q^\varepsilon}}
\partial^\alpha q^\varepsilon
\bigg\|_{L^2}^2
+
\Big\|
\frac{\partial^\alpha r^\varepsilon}
     {\sqrt{1+\varepsilon r^\varepsilon}}
\Big\|_{L^2}^2
+
\|\partial^\alpha(u^\varepsilon,\omega^\varepsilon)\|_{L^2}^2
\bigg)
\nonumber\\
&+\kappa\varepsilon
\sum_{|\alpha|\leq2}
\int_{\mathbb R^3}
 (
\partial^\alpha u^\varepsilon\cdot\nabla\partial^\alpha q^\varepsilon
+
\partial^\alpha\omega^\varepsilon\cdot\nabla\partial^\alpha r^\varepsilon
 ){\rm d}x ,
\end{align}
where \(0<\kappa\ll1\) will be chosen later. The corresponding dissipation
functional is given by
\begin{align}\label{G3.33}
\mathcal D_\varepsilon(t)
:=&
\|\nabla u^\varepsilon(t)\|_{H^3}^2
+\|(u^\varepsilon-\omega^\varepsilon)(t)\|_{H^3}^2
+\|\nabla(q^\varepsilon,r^\varepsilon)(t)\|_{H^2}^2
+\|\nabla\omega^\varepsilon(t)\|_{H^2}^2 .
\end{align}
Owing to Cauchy’s inequality, it holds that
\begin{align*}
\bigg|
\varepsilon
\sum_{|\alpha|\leq2}
\int_{\mathbb R^3}
 (
\partial^\alpha u^\varepsilon\cdot\nabla\partial^\alpha q^\varepsilon
+
\partial^\alpha\omega^\varepsilon\cdot\nabla\partial^\alpha r^\varepsilon
 ) {\rm d}x
\bigg|
\lesssim
\varepsilon\|(q^\varepsilon,u^\varepsilon,r^\varepsilon,\omega^\varepsilon)\|_{H^3}^2 .    
\end{align*}
Thus,  choosing $\varepsilon\in (0,1)$ sufficiently small, we deduce that
\begin{align}\label{G3.34}
\mathcal X_\varepsilon(t)
\sim
\|(q^\varepsilon,u^\varepsilon,r^\varepsilon,\omega^\varepsilon)(t)\|_{H^3}^2.
\end{align}

Adding 
\eqref{G3.2}, \eqref{G3.7} and \(\kappa\times\eqref{G3.18}\) together, and choosing $\kappa\in (0,1)$ sufficiently small, we deduce the following differential inequality:
\begin{align}\label{G3.35}
\frac{{\rm d}}{{\rm d}t}\mathcal{X}_{\varepsilon}(t)+  \eta_{6}   \mathcal{D}_{\varepsilon}(t)\leq 0,
\end{align}
for some constant \(\eta_6>0\),
where we have used  the inequality
\begin{align*}
\|\nabla\omega^\varepsilon\|_{H^2}^2
\lesssim
\|\nabla u^\varepsilon\|_{H^3}^2
+\|u^\varepsilon-\omega^\varepsilon\|_{H^3}^2 .
\end{align*}

Integrating \eqref{G3.35} over \([0,t]\), we obtain
\begin{align}\label{G3.36}
\mathcal X_\varepsilon(t)
+\eta_{6}\int_0^t\mathcal D_\varepsilon(\tau)\,d\tau
\leq
\mathcal X_\varepsilon(0),
\qquad 0\leq t<T .
\end{align}
Using the equivalence \eqref{G3.34}, we further obtain
\begin{align}\label{G3.37}
&\sup_{0\leq t<T}
\|(q^\varepsilon,u^\varepsilon,r^\varepsilon,\omega^\varepsilon)(t)\|_{H^3}^2
+\int_0^T
\big(
\|(u^\varepsilon-\omega^\varepsilon)(\tau)\|_{H^3}^2
+\|\nabla(q^\varepsilon,r^\varepsilon,\omega^\varepsilon)(\tau)\|_{H^2}^2
+\|\nabla u^\varepsilon(\tau)\|_{H^3}^2
\big){\rm d}\tau
\nonumber\\
&\qquad
\leq C
\|(q_0^\varepsilon,u_0^\varepsilon,r_0^\varepsilon,\omega_0^\varepsilon)\|_{H^3}^2 ,
\end{align}
where \(C > 0\) is a constant independent of \(T\) and \(\varepsilon\).

%We now complete the proof of Theorem \ref{Th1}. 
For the local well-posedness, using the same approach as \cite{Matsumura-Nishida-1979,Matsumura-Nishida-1980}, we can construct the unique local strong solution to the Cauchy problem \eqref{I3}--\eqref{I3-1}. Such a solution can be extended to a global one by a continuity argument and the a priori estimate \eqref{G3.37}. Moreover, the solution satisfies the bound \eqref{TG2}.  The proof of Theorem \ref{Th1} is complete.
\hfill $\square$ %\end{proof}

\section{Analysis of the limiting incompressible system}
This section is devoted to the global existence, uniqueness, and time-decay rates of strong solutions to the limiting incompressible two-phase system 
\eqref{I4}--\eqref{I4-1}. Local well-posedness follows from the standard iteration scheme for hyperbolic--parabolic fluid systems; see \cite{ZWXM-ZAMP-2021}. We therefore focus on the uniform estimates \eqref{TGG2} and time decay rates \eqref{TGG3}-\eqref{TGG4}.

\subsection{A priori estimates}
Let \((u,\omega)\) be a strong solution to the system \eqref{I4}--\eqref{I4-1}
on \([0,T_1)\), with an arbitrary constant \(T_1>0\). Assume that 
\begin{align}\label{G4.1}
\sup_{0\leq t<T_1}
\|(u,\omega)(t)\|_{H^3}
\leq \sigma,
\end{align}
where \(0<\sigma<1\) is chosen sufficiently small.

We begin with the zero-order estimate for \((u,\omega)\).

\begin{lem}\label{L4.1}
Let $( u,\omega)$ be the strong solution to the incompressible two-phase system \eqref{I4}--\eqref{I4-1}. Then the following differential equality holds:
\begin{align}
\label{G4.2}
 \frac{{\rm d}}{{\rm d} t} \|(u,\omega)\|_{L^2}^2 
+2\mu\|\nabla u\|_{L^2}^2+
2\|u-\omega\|_{L^2}^2=0.
\end{align}
\end{lem}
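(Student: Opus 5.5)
The identity \eqref{G4.2} is the basic $L^2$ energy balance for \eqref{I4}. I will obtain it by testing the two momentum equations against the corresponding velocities and adding. Concretely, I take the $L^2$ inner product of \eqref{I4}$_1$ with $u$ and of \eqref{I4}$_3$ with $\omega$. This is legitimate for the strong $H^3$ solutions under consideration, since all terms are then integrable and decay at infinity.

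The terms are handled one at a time.

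\emph{Time derivatives.} These give $\frac12\frac{\rm d}{{\rm d}t}\|u\|_{L^2}^2$ and $\frac12\frac{\rm d}{{\rm d}t}\|\omega\|_{L^2}^2$.

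\emph{Convection.} Using $\operatorname{div}u=0$, integration by parts gives
\[
\int_{\mathbb R^3} (u\cdot\nabla u)\cdot u\,{\rm d}x=\frac12\int_{\mathbb R^3} u\cdot\nabla|u|^2\,{\rm d}x=-\frac12\int_{\mathbb R^3}|u|^2\operatorname{div}u\,{\rm d}x=0.
\]
The same argument with $\operatorname{div}\omega=0$ removes the $\omega$-convection term.

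\emph{Pressures.} These vanish for the same reason: $\int\nabla\pi_1\cdot u\,{\rm d}x=-\int\pi_1\operatorname{div}u\,{\rm d}x=0$, and likewise for $\pi_2$ against $\omega$. Here $\pi_j\in L^2$, since it is given by Riesz transforms of quadratic terms, so the integration by parts is justified.

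\emph{Viscosity.} The viscous term gives $-\mu\int\Delta u\cdot u\,{\rm d}x=\mu\|\nabla u\|_{L^2}^2$.

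\emph{Drag.} The drag terms combine to
\[
\int_{\mathbb R^3}(\omega-u)\cdot u\,{\rm d}x+\int_{\mathbb R^3}(u-\omega)\cdot\omega\,{\rm d}x=-\|u-\omega\|_{L^2}^2.
\]

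Adding these contributions and multiplying by $2$ yields exactly \eqref{G4.2}. No step is a real obstacle; the only point requiring care is justifying the integrations by parts, which follows from the regularity $U\in\mathcal C([0,T_1);H^3)$ and density arguments.
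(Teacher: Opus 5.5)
Your proposal is correct and follows the paper's proof. Both test \eqref{I4}$_1$ with $u$ and \eqref{I4}$_3$ with $\omega$, use $\operatorname{div}u=\operatorname{div}\omega=0$ to remove the convection and pressure terms, and combine the drag terms into $-\|u-\omega\|_{L^2}^2$. Your justification of the integrations by parts (for example $\pi_j\in L^2$ via the Riesz-transform representation) is a detail the paper leaves implicit.
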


\begin{proof}
Multiplying \eqref{I4}$_1$ by $u$ and  \eqref{I4}$_3$ by $\omega$, integrating over $\mathbb R^3$, and then applying the divergence-free conditions \eqref{I4}$_2$ and \eqref{I4}$_4$, we have
\begin{align*}
&\frac{1}{2}\frac{{\rm d}}{{\rm d}t}\|(u,\omega)\|_{L^2}^2+\mu \|\nabla u\|_{L^2}^2+\|u-\omega\|_{L^2}^2\nonumber\\
=&\,-\int_{\mathbb R^3} u\cdot (u\cdot\nabla u){\rm d} x-\int_{\mathbb R^3} \omega\cdot (\omega\cdot\nabla \omega){\rm d} x-\int_{\mathbb R^3}\nabla\pi_{1}\cdot u{\rm d}x-\int_{\mathbb R^3}\nabla {\pi}_{2}\cdot\omega{\rm d}x\nonumber\\
=&\, \frac{1}{2}\int_{\mathbb R^3} {\rm div}\,u|u|^2{\rm d}x+ \frac{1}{2}\int_{\mathbb R^3} {\rm div}\,\omega|\omega|^2{\rm d}x+\int_{\mathbb R^3} \pi_{1}{\rm div}\,u{\rm d}x+\int_{\mathbb R^3} \pi_{2}{\rm div}\,\omega{\rm d}x \nonumber\\
=&\,0,
\end{align*}
which yields \eqref{G4.2}.
\end{proof}

Next, we derive the higher-order energy estimates for \((u,\omega)\). 

\begin{lem}\label{L4.2}
There exists a positive constant $\eta_1^\prime$, such that
\begin{align}\label{G4.3}
 \frac{{\rm d}}{{\rm d}t} \sum_{1\leq |\alpha|\leq 3}\|(\partial^\alpha u,\partial^\alpha\omega)\|_{L^2}^2+\eta_1^\prime\sum_{1\leq\alpha\leq 3}\big(\|\partial^\alpha u-\partial^\alpha\omega\|_{L^2}^2+\|\nabla \partial^\alpha u\|_{L^2}^2 \big) \leq 0.
\end{align}
\end{lem}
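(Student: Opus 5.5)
Apply $\partial^\alpha$ with $1\leq|\alpha|\leq3$ to \eqref{I4}$_1$ and \eqref{I4}$_3$, and take the $L^2$ inner product with $\partial^\alpha u$ and $\partial^\alpha\omega$, respectively. Since $\partial^\alpha$ commutes with divergence, $\operatorname{div}\partial^\alpha u=\operatorname{div}\partial^\alpha\omega=0$. Hence the pressure terms vanish after integration by parts:
\[
\int_{\mathbb R^3}\nabla\partial^\alpha\pi_1\cdot\partial^\alpha u\,{\rm d}x=0,\qquad \int_{\mathbb R^3}\nabla\partial^\alpha\pi_2\cdot\partial^\alpha \omega\,{\rm d}x=0.
\]
The linear terms produce $\mu\|\nabla\partial^\alpha u\|_{L^2}^2$. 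The drag terms combine as
\[
\int(\partial^\alpha\omega-\partial^\alpha u)\cdot\partial^\alpha u+\int(\partial^\alpha u-\partial^\alpha\omega)\cdot\partial^\alpha\omega=-\|\partial^\alpha(u-\omega)\|_{L^2}^2 .
\]
This gives
\[
\frac12\frac{\rm d}{{\rm d}t}\|\partial^\alpha(u,\omega)\|_{L^2}^2+\mu\|\nabla\partial^\alpha u\|_{L^2}^2+\|\partial^\alpha(u-\omega)\|_{L^2}^2=-\int[\partial^\alpha,u\cdot\nabla]u\cdot\partial^\alpha u-\int[\partial^\alpha,\omega\cdot\nabla]\omega\cdot\partial^\alpha\omega .
\]
We have used $\int u\cdot\nabla\partial^\alpha u\cdot\partial^\alpha u=0$ by incompressibility, and likewise for $\omega$.

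It remains to bound the commutator terms by the dissipation, with a small factor $\sigma$. The estimate must hold with right-hand side $0$ once $\sigma$ is small. The $u$-commutator is harmless: by Lemma \ref{L2.4} and $\|[\partial^\alpha,u\cdot\nabla]u\|_{L^2}\lesssim\|\nabla u\|_{L^\infty}\|\nabla^{|\alpha|}u\|_{L^2}+\|\nabla^{|\alpha|} u\|_{L^3}\|\nabla u\|_{L^6}$, together with $\|\partial^\alpha u\|_{L^2}\lesssim\|\nabla u\|_{H^3}$, it is $\lesssim\sigma\|\nabla u\|_{H^3}^2$.

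The main obstacle is the $\omega$-commutator. Its naive bound is $\sigma\|\nabla\omega\|_{H^2}^2$, and $\|\nabla\omega\|_{H^2}$ is not among the dissipated quantities on the left of \eqref{G4.3}. I will write $\omega=u-(u-\omega)$ in the transport factor, i.e. $[\partial^\alpha,\omega\cdot\nabla]\omega=[\partial^\alpha,u\cdot\nabla]\omega-[\partial^\alpha,(u-\omega)\cdot\nabla]\omega$. In every term at least one derivative falls on the coefficient, so each product contains a factor $\partial^\beta u$ or $\partial^\beta(u-\omega)$ with $1\leq|\beta|\leq3$. I will place that factor in $L^2$ or $L^\infty$ as appropriate. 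The other factors, $\nabla^{k}\omega$ with $k\leq3$, and $\partial^\alpha\omega$, are bounded via
\[
\|\nabla^k\omega\|_{L^2}\leq\|\nabla^k u\|_{L^2}+\|\nabla^k(u-\omega)\|_{L^2}.
\]
With this, all quantities are controlled by $\sum_{1\leq|\alpha|\leq3}(\|\nabla\partial^\alpha u\|_{L^2}^2+\|\partial^\alpha(u-\omega)\|_{L^2}^2)$, plus possibly $\|\nabla u\|_{L^2}^2$. Splitting $\partial^\alpha\omega=\partial^\alpha u-\partial^\alpha(u-\omega)$ in the test function likewise gives a trilinear bound $\lesssim\sigma\,\mathcal D$.

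There is one delicate point: a term like $\|\nabla u\|_{L^2}$ is not in the dissipation of \eqref{G4.3}, and $\|\nabla(u-\omega)\|_{L^2}$ only is when $|\alpha|=1$ is summed. I will arrange the Hölder/Sobolev exponents (via Lemma \ref{L2.1}, using $L^\infty\lesssim\|\nabla\cdot\|_{H^1}$ and $L^3,L^6$ bounds from $\dot H^1$) so that only derivatives of order $\geq1$ of $u-\omega$ and order $\geq2$ of $u$ appear, with the remaining factors absorbed into $\sigma$. If a factor $\|\nabla u\|_{L^2}$ persists, I will accept it and later combine with Lemma \ref{L4.1}, which supplies $\mu\|\nabla u\|_{L^2}^2$. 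Choosing $\sigma$ small then absorbs everything into the left side and yields \eqref{G4.3}.
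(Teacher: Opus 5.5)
Your energy identity is the paper's \eqref{G4.4}: pressure and transport terms vanish by incompressibility, and the drag gives $-\|\partial^\alpha(u-\omega)\|_{L^2}^2$. Your commutator bounds via Lemma \ref{L2.4}, combined with the substitution $\omega=u-(u-\omega)$, are what the paper does in the last line of \eqref{G4.5}.

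\textbf{The gap at $|\alpha|=1$.} This step is not yet a proof, and the remedy you propose cannot work. The term $\int_{\mathbb R^3}(\partial_iu\cdot\nabla u)\cdot\partial_iu\,{\rm d}x$ appears in two places:
\begin{itemize}
\item in the $u$-commutator, so that one is not harmless either: $\sigma\|\nabla u\|_{H^3}^2$ contains $\sigma\|\nabla u\|_{L^2}^2$, which \eqref{G4.3} does not dissipate;
\item again in the $\omega$-commutator after your substitution.
\end{itemize}
No H\"older/Sobolev arrangement of three factors of $\nabla u$ bounds this term by $\sigma\|\nabla^2u\|_{H^2}^2$. Under the scaling $u(x)\mapsto u(\lambda x)$ the integral is invariant, whereas $\|\nabla u\|_{H^2}\|\nabla^2u\|_{H^2}^2=O(\lambda^{1/2})$ as $\lambda\to0$. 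Hence the small factor must involve $u$ itself (e.g.\ $\|u\|_{L^3}$), which only an integration by parts can produce.

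Falling back on Lemma \ref{L4.1} does close an estimate. However, what it proves is the combined inequality \eqref{gron}, not \eqref{G4.3} as stated. The paper's bound $\sigma\|\nabla u\|_{H^2}^2$ in \eqref{G4.5} has precisely the same defect, and only the combined form is used afterwards.

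\textbf{The fix.} One integration by parts at $|\alpha|=1$ suffices. Since $\int_{\mathbb R^3}(u\cdot\nabla\partial_iu)\cdot\partial_iu\,{\rm d}x=0$,
\[
\int_{\mathbb R^3}[\partial_i,u\cdot\nabla]u\cdot\partial_iu\,{\rm d}x=-\int_{\mathbb R^3}(u\cdot\nabla u)\cdot\partial_i^2u\,{\rm d}x\lesssim\|u\|_{L^3}\|\nabla u\|_{L^6}\|\nabla^2u\|_{L^2}\lesssim\sigma\|\nabla^2u\|_{L^2}^2 .
\]
The same computation for $\omega$ gives $\sigma\|\nabla^2\omega\|_{L^2}^2\leq2\sigma\big(\|\nabla^2u\|_{L^2}^2+\|\nabla^2(u-\omega)\|_{L^2}^2\big)$. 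Both terms on the right are dissipated in \eqref{G4.3}.

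For $2\leq|\alpha|\leq3$, bound the commutators by $\|\nabla(u,\omega)\|_{L^\infty}\|\nabla^{|\alpha|}(u,\omega)\|_{L^2}$. When $|\alpha|=3$, add $\|\nabla^2(u,\omega)\|_{L^3}\|\nabla^2(u,\omega)\|_{L^6}$. Do not use $\|\nabla^3\omega\|_{L^3}$, since $\omega$ has no fourth derivative under control. Then only $\nabla^k(u,\omega)$ with $k\geq2$ sit in non-small positions. Writing $\nabla^k\omega=\nabla^ku-\nabla^k(u-\omega)$ yields \eqref{G4.3} with right-hand side $0$, without invoking Lemma \ref{L4.1}.
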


\begin{proof}
Applying $\partial^\alpha$ with $1\leq|\alpha|\leq3$ to the equations \eqref{I4}$_1$ and \eqref{I4}$_3$ and taking inner product of the resulting equations with $\partial^\alpha u$ and $\partial^\alpha \omega$, respectively, we obtain
\begin{align}\label{G4.4}
&\frac{1}{2}\frac{{\rm d}}{{\rm d}t}\|(\partial^\alpha u,\partial^\alpha \omega
)\|_{L^2}^2+   \|\partial^\alpha(u-\omega)\|_{L^2}^2+\mu \|\nabla\partial^\alpha u\|_{L^2}^2 \nonumber\\
=&\,-\int_{\mathbb R^3}[\partial^\alpha,u\cdot\nabla] u\cdot\partial^\alpha u{\rm d}x-\int_{\mathbb R^3}[\partial^\alpha,\omega\cdot\nabla] \omega\cdot\partial^\alpha \omega{\rm d}x+\frac{1}{2}\int_{\mathbb R^3}|\partial^\alpha u|^2{\rm div}\,u{\rm d}x\nonumber\\
&+\frac{1}{2}\int_{\mathbb R^3}|\partial^\alpha \omega|^2{\rm div}\,\omega{\rm d}x+\int_{\mathbb R^3} \partial^\alpha \pi_1\partial^\alpha{\rm div}\,u{\rm d}x+\int_{\mathbb R^3} \partial^\alpha \pi_2\partial^\alpha{\rm div}\,\omega{\rm d}x\nonumber\\
=&\,-\int_{\mathbb R^3}[\partial^\alpha,u\cdot\nabla] u\cdot\partial^\alpha u{\rm d}x-\int_{\mathbb R^3}[\partial^\alpha,\omega\cdot\nabla] \omega\cdot\partial^\alpha \omega{\rm d}x\nonumber\\
\equiv:&\,I_{1}^\prime+I_{2}^\prime.
\end{align}
Thanks to  Lemma \ref{L2.4}, we have
\begin{align}\label{G4.5}
 I_{1}^\prime+I_{2}^\prime\lesssim&\, (\|[\partial^\alpha,u\cdot\nabla]u\|_{L^2}+\|[\partial^\alpha,\omega\cdot\nabla]\omega\|_{L^2}) \|\partial^\alpha(u,\omega)\|_{L^2}\nonumber\\
 \lesssim&\, \|(u,\omega)\|_{H^3}\|\nabla(u,\omega)\|_{H^2}^2\nonumber\\
 \lesssim&\,\sigma \|\nabla u\|_{H^2}^2+\sigma\|\nabla(u-\omega)\|_{H^2}^2.
\end{align}
Substituting \eqref{G4.5} into \eqref{G4.4} and summing over $1\leq|\alpha|\leq 3$, we obtain \eqref{G4.3}.
\end{proof}

\begin{proof}[Proof of Theorem \ref{Th2}: global existence and uniqueness of classical solutions]
We now prove the global well-\linebreak posedness of  Cauchy problem \eqref{I4}--\eqref{I4-1}. It follows from \eqref{G4.2} and \eqref{G4.3} that 
\begin{align}
\frac{{\rm d}}{{\rm d}t}\|(u,\omega)\|_{H^3}^2+ \eta^\prime_{2}\big ( \|\nabla u\|_{H^3}^2+  \|u-\omega\|_{H^3}^2   \big) \leq 0,\label{gron}
\end{align}
for some constant $\eta_{2}^\prime>0$. Integrating  the above inequality over \([0,t]\) yields
\begin{align}
\label{G4.6}
&\|(u,\omega)(t)\|_{  H^3}^2
+\eta_2^\prime
\int_0^t
\big(
\|(u-\omega)(\tau)\|_{  H^3}^2
+\|\nabla u(\tau)\|_{  H^3}^2
\big) {\rm d}\tau\leq
\|(u_0,\omega_0)\|_{ H^3}^2,
\end{align}
for any $0\leq t<T_1$. 
Moreover, by choosing $\|(u_0,\omega_0)\|_{H^3}$ sufficiently small, the bootstrap assumption \eqref{G4.1} is justified. Therefore, the local strong solution extends globally by the above {\it uniform a priori estimates} of $(u,\omega)$ and the standard continuity argument. In particular, the inequality \eqref{G4.6} holds for all $t \geq 0$, which yields \eqref{TGG2}.
\end{proof}

\subsection{Linear decay estimate}
In this subsection, we derive the decay estimates of the following system
\begin{equation}\label{G4.7}
\left\{
\begin{aligned}
&\partial_tu-\mu\Delta u+( u-\omega)=0,\\
&\operatorname{div}u=0,\\
&\partial_t\omega -(u-\omega)=0 ,\\
&\operatorname{div}\omega=0,
\end{aligned}
\right.
\end{equation}
with the initial data
\begin{align}\label{G4.8}
 (u , \omega )|_{t=0}= ( u_0, \omega _0),
\end{align}
where $\operatorname{div}u_0=\operatorname{div}\omega_0=0$. This is the linearized system of  \eqref{I4}--\eqref{I4-1} in the space $L^2_\sigma(\mathbb{R}^3)$. For brevity, we write $U(t):=(u(t),\omega(t))$ and $U_0:=(u_0,\omega_0)$ and represent the solution of \eqref{G4.7}--\eqref{G4.8} as
\begin{align*}
U(t)=\mathbb{A}(t) U_0.
\end{align*}
Here $\mathbb{A}(t)$ is the solution operator of the system \eqref{G4.7}--\eqref{G4.8}. Now we establish the $L^p$--$L^q$ time-decay property of the solution operator $\mathbb{A}(t)$.
\begin{prop}\label{T4.1}
Let \(1\leq q\leq2\). For any multi-indices \(\alpha,\alpha'\) with
\(\alpha'\leq\alpha\) and \(m=|\alpha-\alpha'|\), it holds that 
\begin{align}\label{G4.9}
 \|\partial^{\alpha}\mathbb{A}(t)U_{0}\|_{L^{2}}
\lesssim  &\, (1+t)^{-\frac{3}{2} (\frac{1}{q}-\frac{1}{2} )-\frac{m}{2}}
\big(\|\partial^{\alpha^{\prime}}U_{0}\|_{L^{q}}
+\|\partial^{\alpha}U_{0}\|_{L^{2}}\big).
\end{align}
Moreover, the relaxation term $u-\omega$ exhibits a faster decay
\begin{align}\label{G4.10}
 \|\partial^{\alpha}u(t)-\partial^\alpha\omega(t)\|_{L^{2}}
\lesssim  &\, (1+t)^{-\frac{3}{2} (\frac{1}{q}-\frac{1}{2} )-\frac{m+1}{2}}
\big(\|\partial^{\alpha^{\prime}}U_{0}\|_{L^{q}}
+\|\partial^{\alpha}U_{0}\|_{L^{2}}\big).
\end{align}
\end{prop}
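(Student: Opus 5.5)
We will argue entirely on the Fourier side, since \eqref{G4.7} is linear with constant coefficients. For divergence-free data the Leray projection acts as the identity, so taking the Fourier transform gives, for each fixed $\xi$, the $2\times2$ system (acting componentwise on the vectors $\hat u,\hat\omega$)
\begin{align*}
\frac{{\rm d}}{{\rm d}t}\begin{pmatrix}\hat u\\ \hat\omega\end{pmatrix}
=M(\xi)\begin{pmatrix}\hat u\\ \hat\omega\end{pmatrix},\qquad
M(\xi)=\begin{pmatrix}-\mu|\xi|^2-1 & 1\\ 1 & -1\end{pmatrix}.
\end{align*}
The matrix $M(\xi)$ is real symmetric. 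Its eigenvalues are
\begin{align*}
\lambda_\pm(\xi)=\frac{-(\mu|\xi|^2+2)\pm\sqrt{\mu^2|\xi|^4+4}}{2},
\end{align*}
and both are negative for $\xi\neq0$. Because $M$ is symmetric, $|e^{tM(\xi)}|\le e^{t\lambda_+(\xi)}$, with no loss from non-normality.

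We will split into low frequencies $|\xi|\le1$ and high frequencies $|\xi|\ge1$.
\begin{itemize}
\item At low frequencies, Taylor expansion gives $\lambda_+(\xi)=-\tfrac{\mu}{2}|\xi|^2+O(|\xi|^4)$ and $\lambda_-(\xi)\le-1$. Hence $\lambda_+\le -c|\xi|^2$ for some $c>0$.
\item At high frequencies, $\lambda_+(\xi)\le -c_0$ for some $c_0>0$. Indeed, $\lambda_+\lambda_-=\det M=\mu|\xi|^2$ and $|\lambda_-|\le \mu|\xi|^2+2$, so $|\lambda_+|\ge \mu|\xi|^2/(\mu|\xi|^2+2)\ge \mu/(\mu+2)$.
\end{itemize}
This yields the pointwise bound
\begin{align*}
|\widehat{\mathbb A(t)U_0}(\xi)|\lesssim e^{-c|\xi|^2t}\mathbf 1_{|\xi|\le1}|\hat U_0(\xi)|+e^{-c_0t}\mathbf 1_{|\xi|\ge1}|\hat U_0(\xi)|.
\end{align*}

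For \eqref{G4.9} we treat the two frequency regions separately.
\begin{itemize}
\item Low frequencies: write $\xi^\alpha=\xi^{\alpha-\alpha'}\xi^{\alpha'}$ and apply Hausdorff--Young, $\|\xi^{\alpha'}\hat U_0\|_{L^{q'}}\le\|\partial^{\alpha'}U_0\|_{L^q}$. H\"older in $\xi$ then gives
\begin{align*}
\Big(\int_{|\xi|\le1}|\xi|^{2m}e^{-2c|\xi|^2t}|\xi^{\alpha'}\hat U_0|^2{\rm d}\xi\Big)^{1/2}
\lesssim \big\||\xi|^{m}e^{-c|\xi|^2t}\big\|_{L^{p}(|\xi|\le1)}\|\partial^{\alpha'}U_0\|_{L^q},
\end{align*}
where $\frac1p=\frac12-\frac1{q'}=\frac1q-\frac12$. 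The standard scaling computation bounds the $L^p$ norm by $(1+t)^{-\frac32(\frac1q-\frac12)-\frac m2}$. The case $q=2$, i.e.\ $p=\infty$, is included.
\item High frequencies: the contribution is at most $e^{-c_0t}\|\partial^\alpha U_0\|_{L^2}$ by Plancherel.
\end{itemize}

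For \eqref{G4.10}, the key step, and the one I expect to be the main obstacle, is to show that the relative velocity gains a factor $|\xi|$ at low frequency. The high-frequency part is again exponentially decaying and causes no difficulty. For the low-frequency part we will use the explicit spectral decomposition $e^{tM}=e^{t\lambda_+}P_++e^{t\lambda_-}P_-$, where $P_\pm$ are the orthogonal eigenprojections.
\begin{itemize}
\item The $\lambda_+$ eigenvector is $v_+=(1,\,1+\lambda_+)$, because the second row of $M$ gives $\hat u=(1+\lambda_+)\hat\omega$. Its relative component $\hat u-\hat\omega$ is proportional to $\lambda_+=O(|\xi|^2)$, so after normalization $|(1,-1)\cdot P_+|\lesssim|\xi|^2$.
\item The $\lambda_-$ part is bounded by $e^{-t/2}$ for $|\xi|\le1$.
\end{itemize}
Therefore
\begin{align*}
|\hat u-\hat\omega|\lesssim \big(|\xi|^2e^{-c|\xi|^2t}+e^{-t/2}\big)|\hat U_0|,\qquad |\xi|\le1.
\end{align*}
Repeating the H\"older/Hausdorff--Young step with $|\xi|^{m+2}$ in place of $|\xi|^m$ gives decay of order $(1+t)^{-\frac32(\frac1q-\frac12)-\frac{m+2}{2}}$, which is even better than the claimed rate with $m+1$. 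We therefore only need the weaker bound $|\xi|\lesssim|\xi|^{1}$, which also leaves room if a sloppier projection estimate is used.

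The main care point is uniformity of the projections $P_\pm$ near $\xi=0$. The spectral gap satisfies $\lambda_+-\lambda_-=\sqrt{\mu^2|\xi|^4+4}\ge2$, so $P_\pm$ are smooth and bounded for all $\xi$. In particular there is no degeneracy and the expansions above are legitimate.
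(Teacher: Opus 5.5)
Your proposal is correct and follows essentially the paper's route. Both arguments pass to the $2\times2$ Fourier symbol and bound its eigenvalues by $\lambda_+\le -c|\xi|^2/(1+|\xi|^2)$ and $\lambda_-\le -c$. Both then use Hausdorff--Young plus H\"older at low frequency and exponential decay at high frequency, and both observe that the slow eigenmode has relative component $O(|\xi|^2)$. The paper weakens this to $|\xi|$, whereas you keep it and get the stronger rate with $\frac{m+2}{2}$. Your use of the symmetry of $M$ to get $|e^{tM}|\le e^{t\lambda_+}$ is a slightly cleaner substitute for the paper's bounded-projector argument.

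One small slip: the relation $\hat u=(1+\lambda_+)\hat\omega$ gives the eigenvector $(1+\lambda_+,1)$, not $(1,1+\lambda_+)$. Its relative component is still $\lambda_+=O(|\xi|^2)$, so nothing else changes.
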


\begin{proof}
Taking the Fourier transform with respect to \(x\) for the equations \eqref{G4.7}$_1$ and \eqref{G4.7}$_3$, and utilizing the equations \eqref{G4.7}$_2$ and \eqref{G4.7}$_4$, we obtain 
\begin{equation}\label{G4.11}
\left\{
\begin{aligned}
&\partial_t\widehat u+(\mu|\xi|^2+1)\widehat u-\widehat\omega=0,\\
&\partial_t\widehat\omega-\widehat u+\widehat\omega=0.
\end{aligned}
\right.
\end{equation}
Then, \eqref{G4.11} can be rewritten into the following matrix form:
\begin{align*}
\partial_t
\begin{pmatrix}
\widehat u\\
\widehat\omega
\end{pmatrix}
=
A(\xi)
\begin{pmatrix}
\widehat u\\
\widehat\omega
\end{pmatrix},
\end{align*}
where $A(\xi)$ is given by
\begin{align*}
 A(\xi):=
\begin{pmatrix}
-(\mu|\xi|^2+1)&1\\
1&-1
\end{pmatrix}.   
\end{align*}
By a direct calculation, we obtain the following eigenvalues of \(A(\xi)\):
\begin{align*}
\lambda_\pm(\xi)
=
-\frac{\mu|\xi|^2+2}{2}
\pm
\frac12\sqrt{\mu^2|\xi|^4+4}.
\end{align*}

On the one hand, in the low frequencies regime \(|\xi|\leq1\), we have
\begin{align*}
\lambda_+(\xi)
=
-\frac{\mu}{2}|\xi|^2+\mathcal{O}(|\xi|^4),
\quad
\lambda_-(\xi)
=
-2+\mathcal{O}(|\xi|^2).    
\end{align*}
On the other hand, in the high frequencies \(|\xi|\geq1\), since \(\mu>0\) is fixed, there exists a constant \(c>0\), which depends only on \(\mu\), such that
\begin{align*}
\lambda_+(\xi)\leq -c,
\quad
\lambda_-(\xi)\leq -c(1+|\xi|^2).    
\end{align*}
Therefore, for all \(\xi\in\mathbb R^3\), it holds that
\begin{equation}
\label{G4.12}
\lambda_+(\xi)
\leq
-c\frac{|\xi|^2}{1+|\xi|^2},
\quad
\lambda_-(\xi)\leq -c.
\end{equation}

Let \(P_\pm(\xi)\) be the spectral projectors associated with
\(\lambda_\pm(\xi)\). Since
\begin{align*}
\lambda_+(\xi)-\lambda_-(\xi)
=
\sqrt{\mu^2|\xi|^4+4}
\geq 2,    
\end{align*}
the projectors $P_\pm(\xi)$ are therefore uniformly bounded for all frequencies
\(\xi\in\mathbb R^3\). Thus, we further have
\begin{align*}
e^{tA(\xi)}
=
e^{\lambda_+(\xi)t}P_+(\xi)
+
e^{\lambda_-(\xi)t}P_-(\xi),    
\end{align*} 
which, together with \eqref{G4.12}, yields
\begin{align*}
\big|\widehat{\mathbb A(t)U_0}(\xi)\big|
\lesssim\exp\Big(
-c\frac{|\xi|^2}{1+|\xi|^2}t
\Big)
|\widehat U_0(\xi)|.
\end{align*}
In particular, in the low-frequency regime \(|\xi|\leq1\), we have
\begin{align}
\label{G4.low-U}
\big|\widehat{\mathbb A(t)U_0}(\xi)\big|
\lesssim
e^{-c|\xi|^2t}|\widehat U_0(\xi)|,
\end{align}
whereas in the high-frequency region \(|\xi|\geq1\), we have
\begin{align}
\label{G4.high-U}
\big|\widehat{\mathbb A(t)U_0}(\xi)\big|
\lesssim e^{-ct}|\widehat U_0(\xi)|.
\end{align}

We next estimate the relaxation term $u-\omega$.
For each fixed \(\xi\), we can compute an eigenvector associated with \(\lambda_\pm(\xi)\)
as follows
\begin{align*}
r_\pm(\xi)=
\begin{pmatrix}
1\\
1+\mu|\xi|^2+\lambda_\pm(\xi)
\end{pmatrix}.    
\end{align*} 
Since
\begin{align*}
|\det(r_+(\xi),r_-(\xi))|
=
|\lambda_+(\xi)-\lambda_-(\xi)|
=
\sqrt{\mu^2|\xi|^4+4}\geq2,    
\end{align*}
the decomposition with respect to \(r_+(\xi)\) and \(r_-(\xi)\) is uniformly
bounded. Thus, it holds that
\begin{align*}
\widehat{\mathbb A(t)U_0}(\xi)
=
c_+(\xi)e^{\lambda_+(\xi)t}r_+(\xi)
+
c_-(\xi)e^{\lambda_-(\xi)t}r_-(\xi),    
\end{align*}
with
\begin{align*}
 |c_+(\xi)|+|c_-(\xi)|
\leq C|\widehat U_0(\xi)|.   
\end{align*}

Now we estimate the relaxation term $\hat{u}(t,\xi)-\hat{\omega}(t,\xi)$. Taking the difference of the two components gives
\begin{align*}
\hat{u}(t,\xi)-\hat{\omega}(t,\xi)
=
-c_+(\xi)e^{\lambda_+(\xi)t}
\big(\mu|\xi|^2+\lambda_+(\xi)\big)
-c_-(\xi)e^{\lambda_-(\xi)t}
\big(\mu|\xi|^2+\lambda_-(\xi)\big).    
\end{align*}

For \(|\xi|\leq1\), using
\begin{align*}
\lambda_+(\xi)
=
-\frac{\mu}{2}|\xi|^2+\mathcal{O}(|\xi|^4),
\quad
\lambda_-(\xi)=-2+\mathcal{O}(|\xi|^2),    
\end{align*}
we get
\begin{align*}
|\mu|\xi|^2+\lambda_+(\xi)|\lesssim |\xi|^2\lesssim |\xi|,
\quad
|\mu|\xi|^2+\lambda_-(\xi)|\lesssim 1,   
\end{align*}
which, together with
\begin{align*}
e^{\lambda_+(\xi)t}\leq e^{-c|\xi|^2t},
\quad
e^{\lambda_-(\xi)t}\leq e^{-ct},    
\end{align*}
yields
\begin{equation}
\label{G4.low-d}
\big|\hat{u}(t,\xi)-\hat{\omega}(t,\xi)\big|
\leq
C\big(|\xi|e^{-c|\xi|^2t}+e^{-ct}\big)
|\widehat U_0(\xi)|,\quad \text{for}
\quad |\xi|\leq1.
\end{equation}
For \(|\xi|\geq1\), both eigenmodes are exponentially damped. Hence, we obtain
\begin{equation}
\label{G4.high-d}
\big|\hat{u}(t,\xi)-\hat{\omega}(t,\xi)\big|
\leq
Ce^{-ct}|\widehat U_0(\xi)|.
\end{equation}

We now prove the decay estimates \eqref{G4.9}. We split the frequency space $\mathbb{R}^3_\xi$ into the low-frequency part \(|\xi|\leq1\) and the high-frequency part  {\(|\xi|>1\)}. Then, it  follows from \eqref{G4.low-U} that
\begin{align*}
\|\partial^\alpha\mathbb A(t)U_0\|_{L^2(|\xi|\leq1)}
\lesssim
\big\|
|\xi|^m e^{-c|\xi|^2t}
\widehat{\partial^{\alpha'}U_0}
\big\|_{L^2_\xi}.    
\end{align*}
Let \(q'\) be the conjugate exponent of \(q\), and choose \(r\) such that
\begin{align*}
\frac1r+\frac1{q'}=\frac12.   
\end{align*}
Applying H\"{o}lder’s inequality and the Hausdorff--Young inequality, we obtain 
\begin{align}\label{G4.17}
 \big\||\xi|^m e^{-c|\xi|^2t}
\widehat{\partial^{\alpha'}U_0}
 \big\|_{L^2_\xi}
 \leq
\big\||\xi|^m e^{-c|\xi|^2t}\big\|_{L^r_\xi}
\big\|\widehat{\partial^{\alpha'}U_0}\big\|_{L^{q'}_\xi}
 \leq
C(1+t)^{-\frac32(\frac1q-\frac12)-\frac m2}
\|\partial^{\alpha'}U_0\|_{L^q}.   
\end{align}
For the high-frequency part, \eqref{G4.high-U} implies that 
\begin{align}\label{G4.18}
\|\partial^\alpha\mathbb A(t)U_0\|_{L^2( {|\xi|>1})}
\leq
Ce^{-ct}\|\partial^\alpha U_0\|_{L^2}
\leq
C(1+t)^{-\frac32(\frac1q-\frac12)-\frac m2}
\|\partial^\alpha U_0\|_{L^2}.    
\end{align}
Combining the estimates \eqref{G4.17} and \eqref{G4.18} gives \eqref{G4.9}.

It remains to prove \eqref{G4.10}. From \eqref{G4.low-d}, we get
\begin{align*}
\|\partial^\alpha u(t)-\partial^\alpha\omega(t)\|_{L^2(|\xi|\leq1)}
&\lesssim
\big\|
|\xi|^{m+1}e^{-c|\xi|^2t}
\widehat{\partial^{\alpha'}U_0}
\big\|_{L^2_\xi}
+
e^{-ct}
\big\|
|\xi|^m\widehat{\partial^{\alpha'}U_0}
\big\|_{L^2(|\xi|\leq1)}.    
\end{align*}
By a direct calculation, we can bound the first term as
\begin{align*}
\big\||\xi|^{m+1}e^{-c|\xi|^2t}
\widehat{\partial^{\alpha'}U_0}
\big \|_{L^2_\xi}
\lesssim (1+t)^{-\frac32(\frac1q-\frac12)-\frac{m+1}{2}}
\|\partial^{\alpha'}U_0\|_{L^q}.    
\end{align*}
For the second term, it holds that 
\begin{align*}
\big\|
|\xi|^m\widehat{\partial^{\alpha'}U_0}
\big\|_{L^2(|\xi|\leq1)}
\lesssim\|\partial^{\alpha'}U_0\|_{L^q}.    
\end{align*}
Consequently, we obtain 
\begin{align}\label{G4.19}
\|\partial^\alpha u(t)-\partial^\alpha\omega(t)\|_{L^2(|\xi|\leq1)} 
\lesssim (1+t)^{-\frac32(\frac1q-\frac12)-\frac{m+1}{2}}
\|\partial^{\alpha'}U_0\|_{L^q}.       
\end{align}
For the high-frequency part \( {|\xi|>1}\), we use \eqref{G4.high-d} to obtain
\begin{align}\label{G4.20}
\|\partial^\alpha u(t)-\partial^\alpha\omega(t)\|_{L^2({|\xi|>1})}
\lesssim
 e^{-ct}\|\partial^\alpha U_0\|_{L^2}.    
\end{align}
Based on the estimates \eqref{G4.19} and \eqref{G4.20}, we obtain \eqref{G4.10}. Thus, the proof of Proposition \ref{T4.1} is complete.
\end{proof}

\subsection{Nonlinear analysis}
We now prove the time-decay rates of the nonlinear system
\eqref{I4}--\eqref{I4-1}. By Duhamel principle, we first rewrite \eqref{I4} in the following form:
\begin{equation}\label{G4.21}
 U(t)=\mathbb A(t)U_0+
\int_0^t \mathbb A(t-\tau)\big(N_1(\tau),N_2(\tau)\big) {\rm d}\tau,
\end{equation}
where
\begin{align*}
U=(u,\omega),\qquad U_0=(u_0,\omega_0),    
\end{align*}
and
\begin{equation}\label{G4.22}
N_1:=-\mathbb P{\rm div}(u\otimes u),
\qquad
N_2:=-\mathbb P{\rm div}(\omega\otimes\omega).
\end{equation}
%Taking the divergence of  \eqref{I4}$_1$ and \eqref{I4}$_3$,
%we obtain
%\begin{align}\label{pi}
%-\Delta\pi_1=\partial_i\partial_j(u_i u_j),
%\qquad
%-\Delta\pi_2=\partial_i\partial_j(\omega_i\omega_j).
%\end{align}
%Hence, in Fourier variables, we have
%\begin{align*}
%\widehat{\partial_\ell\pi_1}(\xi)
%=
%-i\xi_\ell\frac{\xi_i\xi_j}{|\xi|^2}
%\widehat{u_i u_j}(\xi),    
%\end{align*}
%which results in
%\begin{align*}
% \widehat{N}_{1,\ell}(\xi)
%=
%-i\xi_j\widehat{u_j u_\ell}(\xi)
%+
%i\xi_\ell\frac{\xi_i\xi_j}{|\xi|^2}
%\widehat{u_i u_j}(\xi).   
%\end{align*}
Since the Leray projector \(\mathbb P\) is bounded on
\(L^q(\mathbb R^3)\) for all \(1<q<\infty\), and \(\mathbb P\)
commutes with spatial derivatives, one has
\begin{align}\label{G4.23}
\|\mathbb P g\|_{L^q}
\leq C_q\|g\|_{L^q},
\qquad
\|\nabla^m\mathbb P g\|_{L^2}
\leq C\|\nabla^m g\|_{L^2},
\end{align}
for any integer \(m\geq0\), where \(C_q>0\) depends only on \(q\).

Now we control the second term on the right-hand side of \eqref{G4.21}. For this, we need take advantage of the divergence structure of the nonlinear terms. More specifically, we rewrite  \eqref{G4.22} as follows:
\begin{align}\label{G4.24}
(N_1,N_2)
=
-\sum_{j=1}^3
\partial_j
\big(
\mathbb P(u_j u),
\mathbb P(\omega_j\omega)
\big).
\end{align}
Let \(\beta\) be a multi-index with \(|\beta|=k\), where \(k=0,1\).
Since the solution operator \(\mathbb A(t)\), the Leray projector
\(\mathbb P\), and the spatial derivatives commute with each other, it follows from \eqref{G4.24} that
\begin{align*}
\partial^\beta
\mathbb A(t-\tau)(N_1,N_2)
=
-\sum_{j=1}^3
\partial^{\beta+e_j}
\mathbb A(t-\tau)
\big(
\mathbb P(u_j u),
\mathbb P(\omega_j\omega)
\big),
\end{align*}
where $e_j, j=1,2,3,$ are   unit vectors in $\mathbb{R}^3. $

Let $0<\vartheta < \frac{1}{4}$ be a constant, which is sufficiently close to $\frac{1}{4}$.  Setting \(q = \frac{3}{3 - 2\vartheta}\in(1,\frac{6}{5})\), $\alpha=\beta+e_j$ and 
$\alpha'=0$  in Proposition \ref{T4.1},   we obtain, for $k=0,1,$ that
\begin{align}\label{G4.27}
&\bigg\|
\nabla^k
\int_0^t
\mathbb A(t-\tau)\big(N_1(\tau),N_2(\tau)\big)
 {\rm d}\tau
\bigg\|_{L^2}
\nonumber\\
\leq&\, C_{\vartheta}
\int_0^t
(1+t-\tau)^{-\frac54-\frac{k}{2}+\vartheta}
\big(\|F(U(\tau))\|_{L^{\frac{3}{3 - 2\vartheta}}}+\|\nabla^{k+1}F(U(\tau))\|_{L^2}\big){\rm d}\tau\nonumber\\
\leq&\, C_\vartheta \int_0^t
(1+t-\tau)^{-\frac54-\frac{k}{2}+\vartheta}
\big(\|F(U(\tau))\|_{L^{ 1}\cap L^6}+\|\nabla^{k+1}F(U(\tau))\|_{L^2}\big){\rm d}\tau\nonumber\\
\leq&\, C_\vartheta \int_0^t
(1+t-\tau)^{-\frac54-\frac{k}{2}+\vartheta}
\big(\|F(U(\tau))\|_{L^{ 1} }+\|\nabla F(U(\tau))\|_{L^2}+\|\nabla^{k+1}F(U(\tau))\|_{L^2}\big){\rm d}\tau\nonumber\\
\leq&\, C_\vartheta \int_0^t
(1+t-\tau)^{-\frac54-\frac{k}{2}+\vartheta}\|U(\tau)\|_{H^3}^2{\rm d}\tau,
\end{align}
where we have denoted $F(U):=(u\otimes u,\omega\otimes\omega)$. Here we have used the boundedness of Leray projection \eqref{G4.23} in the first inequality, and the following inequalities
\begin{align}
\|F(U)\|_{L^1}
&\leq
C\|U\|_{L^2}^2,\nonumber\\
\|\nabla^kF(U)\|_{L^2}
&\leq
C\|U\|_{H^3}^2,~k=1,2,\nonumber
\end{align}
in the last inequality. Moreover, for the difference between $u$ and $\omega$ components, we use the same approach as \eqref{G4.27} to deduce that
\begin{align}\label{G4.28}
&\left\|
\left(
\int_0^t
\mathbb A(t-\tau)\big(N_1(\tau),N_2(\tau)\big)
\,{\rm d}\tau
\right)_u
-
\left(
\int_0^t
\mathbb A(t-\tau)\big(N_1(\tau),N_2(\tau)\big)
\,{\rm d}\tau
\right)_\omega
\right\|_{L^2}
\nonumber\\
&\leq C_{\vartheta}
\int_0^t
(1+t-\tau)^{-\frac74+\vartheta}\|U(\tau)\|_{H^3}^2{\rm d}\tau .
\end{align}

\begin{proof}[Proof of Theorem \ref{Th2}: time decay rates of classical solutions] 
We first derive $L^2$-decay of $U(t)$. Define
\begin{equation}\label{G4.33}
Y_{\infty}(t):=
\sup_{0\leq s\leq t}
\big\{
(1+s)^{\frac32}\|U(s)\|_{H^3}^2
+
(1+s)^{\frac52}\|\nabla U(s)\|_{H^2}^2
+
(1+s)^{\frac52}\|u(s)-\omega(s)\|_{L^2}^2
\big\}.
\end{equation}
From Proposition \ref{T4.1}, \eqref{G4.21} and \eqref{G4.27}, we get
\begin{align}\label{G4.34}
\|U(t)\|_{L^2}
&\lesssim
(1+t)^{-\frac34}\|U_0\|_{H^3\cap L^1}
+
\int_0^t
(1+t-\tau)^{-\frac54+\vartheta}
\|U(\tau)\|_{H^3}^2{\rm d}\tau\nonumber\\
&\lesssim (1+t)^{-\frac34}\|U_0\|_{H^3\cap L^1}+Y_\infty(t)\int_0^t(1+t-\tau)^{-\frac54+\vartheta}(1+\tau)^{-\frac32}{\rm d}\tau\nonumber\\
&\lesssim (1+t)^{-\frac34}\|U_0\|_{H^3\cap L^1}+Y_\infty(t)(1+t)^{-\frac54+\vartheta}.
\end{align}
Similarly, combining \eqref{G4.21} and \eqref{G4.27} with \(k = 1\) yields
\begin{align}\label{G4.37}
\|\nabla U(t)\|_{L^2}
&\lesssim
(1+t)^{-\frac54}\|U_0\|_{H^3\cap L^1}
+
\int_0^t
(1+t-\tau)^{-\frac74+\theta}
\|U(\tau)\|_{H^3}^2{\rm d}\tau\nonumber\\
&\lesssim (1+t)^{-\frac54}\|U_0\|_{H^3\cap L^1}+Y_\infty(t)\int_0^t
(1+t-\tau)^{-\frac74+\vartheta}(1+\tau)^{-\frac32}d\tau\nonumber\\
&\lesssim (1+t)^{-\frac54}\|U_0\|_{H^3\cap L^1}+Y_\infty(t)(1+t)^{-\frac32}.
\end{align}
Moreover, similar to \eqref{gron}, we have the following differential inequalities
\begin{equation}\label{G4.31}
\frac{{\rm d}}{{\rm d}t}\|U(t)\|_{H^3}^2
+\eta_{3}^\prime \|U(t)\|_{H^3}^2
\lesssim
\|U(t)\|_{L^2}^2,
\end{equation}
and
\begin{equation}\label{G4.32}
\frac{{\rm d}}{{\rm d}t}\|\nabla U(t)\|_{H^2}^2
+\eta_{4}^\prime \|\nabla U(t)\|_{H^2}^2
\lesssim
 \|\nabla U(t)\|_{L^2}^2,
\end{equation}
for some constants $\eta^\prime_{3},\eta_{4}^\prime>0$. Then applying Gronwall's inequality to \eqref{G4.31} and using \eqref{G4.34}, we obtain
\begin{align}
\|U(t)\|_{H^3}^2
\lesssim&\,
e^{-\lambda t}\|U_0\|_{H^3}^2
+
\int_0^t
e^{-\lambda(t-\tau)}
\|U(\tau)\|_{L^2}^2\,{\rm d}\tau
\nonumber\\
\lesssim&\,
\big(\|U_0\|_{H^3\cap L^1}^2+(Y_{\infty}(t))^2\big)
(1+t)^{-\frac32}. \label{G4.41}
\end{align}

Similarly, from \eqref{G4.37} and \eqref{G4.32}, we derive 
\begin{align} 
\|\nabla U(t)\|_{H^2}^2
\lesssim&\,
e^{-\lambda t}\|\nabla U_0\|_{H^2}^2+ \int_0^t
e^{-\lambda(t-\tau)}
\|\nabla U(\tau)\|_{L^2}^2 {\rm d}\tau
\nonumber\\
\lesssim&\,
\big(\|U_0\|_{H^3\cap L^1}^2+(Y_{\infty}(t))^2\big)
(1+t)^{-\frac52}.\label{G4.43}
\end{align}

For the estimate of \(\|u(t)-\omega(t)\|_{L^2}\), from \eqref{G4.10}, \eqref{G4.21} and \eqref{G4.28}, we get
\begin{align} \label{G4.39}
\|u(t)-\omega(t)\|_{L^2}
&\lesssim
(1+t)^{-\frac54}\|U_0\|_{H^3\cap L^1}
+
\int_0^t
(1+t-\tau)^{-\frac74+\vartheta}
\|U(\tau)\|_{H^3}^2{\rm d}\tau\nonumber\\
&\lesssim
\big(\|U_0\|_{H^3\cap L^1} +Y_{\infty}(t) \big)
(1+t)^{-\frac54}.
\end{align}

Thus, combining  \eqref{G4.41}, \eqref{G4.43} and \eqref{G4.39}, we arrive at
\begin{equation*} 
Y_{\infty}(t)
\lesssim
\|U_0\|_{H^3\cap L^1}^2+(Y_{\infty}(t))^2.
\end{equation*}
Taking \(\|U_0\|_{H^3\cap L^1}\) sufficiently small, we conclude that
\begin{equation*}
Y_{\infty}(t)
\lesssim
\|U_0\|_{H^3\cap L^1}^2,
\end{equation*}
for any $t\geq0$. This proves \eqref{TGG3} and \eqref{TGG5}.

Finally, it remains to establish \eqref{TGG4}. From \eqref{TGG5}, we obtain
\begin{align*} 
\|u(t)-\omega(t)\|_{H^3}
\lesssim \|\nabla (u,\omega)\|_{H^2}+\|u(t)-\omega(t)\|_{L^2}\lesssim
 (1+t)^{-\frac54}
\|U_0\|_{H^3\cap L^1}.
 \end{align*}
The proof of Theorem \ref{Th2} is therefore complete. \end{proof}

\subsection{Pressure estimates }

In this subsection, we establish some estimates of the pressure terms in \eqref{I4}, which will be used in the quantitative low Mach number limit later. Taking the divergence of  \eqref{I4}$_1$ and \eqref{I4}$_3$,
we can represent pressure terms $\pi_1$ and $\pi_2$ as follows:
\begin{align}\label{pi}
 -\Delta\pi_1=\partial_i\partial_j(u_i u_j),
 \quad
 -\Delta\pi_2=\partial_i\partial_j(\omega_i\omega_j).
 \end{align}
Then we can solve pressure functions $\pi_1$ and $\pi_2$ from \eqref{pi} as follows:
\begin{align}
\pi_1(t,x)=\mathcal{R}_i\mathcal{R}_j(u_iu_j)(t,x),\quad 
\pi_2(x)=\mathcal{R}_i\mathcal{R}_j(\omega_i\omega_j)(t,x),\label{pi1}
\end{align}
where the operators $\mathcal{R}_k$ with $k=1,2,3$ are the classical Riesz transformation in $\mathbb{R}^3.$

\begin{prop}[Pressure estimates]\label{Ppressure}
Under the assumptions of Theorem \ref{Th2}, the pressure functions $\pi_1$ and $\pi_2$ satisfy the following bounds:
\begin{align}\label{TGGG1}
\sup_{t\geq0}\|(\pi_1,\pi_2)(t)\|_{H^3}
\lesssim \|U_0\|_{H^3}
\end{align}
\begin{align}\label{TGGG2}
\int_0^\infty
\big(
\|\nabla(\pi_1,\pi_2)\|_{H^2}^2
+
\|(\nabla D_t^u\pi_1,\nabla D_t^\omega\pi_2)\|_{H^1}^2
\big)(\tau)\,{\rm d}\tau
\lesssim \|U_0\|_{H^3}^2,
\end{align}
and
\begin{align}\label{TGGG3}
\int_0^\infty
\|(D_t^u\pi_1,D_t^\omega\pi_2)(\tau)\|_{\dot H^{-1}}^2
\,{\rm d}\tau
\lesssim \|U_0\|_{H^3}^2,
\end{align}
where $D_t^u$ and $D_t^\omega$ are material derivatives defined by
\[
D_t^u:=\partial_t+u\cdot\nabla,
\quad
D_t^\omega:=\partial_t+\omega\cdot\nabla.
\]
\end{prop}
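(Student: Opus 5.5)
The plan is to work from the Riesz-transform representation \eqref{pi1}. We combine the $L^2$-boundedness of $\mathcal R_i\mathcal R_j$ with the product estimates of Lemma \ref{L2.1} and Lemma \ref{L2.4}, and then use the uniform bound \eqref{TGG2} from Theorem \ref{Th2}. Throughout, $\|U\|_{H^3}\lesssim \delta_1<1$, so quadratic quantities can always be bounded by linear ones.

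\emph{Step 1: the bound \eqref{TGGG1}.} We have $\|\pi_1\|_{H^3}\lesssim \|u\otimes u\|_{H^3}\lesssim \|u\|_{L^\infty}\|u\|_{H^3}+\|u\|_{H^3}^2\lesssim\|U\|_{H^3}^2$. By \eqref{TGG2} and the smallness of $\|U_0\|_{H^3}$, this is $\lesssim \|U_0\|_{H^3}$. The same argument applies to $\pi_2$.

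\emph{Step 2: the first part of \eqref{TGGG2}.} Since $\nabla\pi_1=\mathcal R_i\mathcal R_j\nabla(u_iu_j)$, we get $\|\nabla\pi_1\|_{H^2}\lesssim \|u\|_{H^3}\|\nabla u\|_{H^2}$. Squaring, integrating in time and using \eqref{TGG2} gives the bound $\sup_t\|U\|_{H^3}^2\int_0^\infty\|\nabla U\|_{H^2}^2\lesssim \|U_0\|_{H^3}^2$. Here we use that the dissipation $\int\|\nabla\omega\|_{H^2}^2$ is also contained in \eqref{TGG2}.

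\emph{Step 3: the material derivatives.} Write $D_t^u\pi_1=\mathcal R_i\mathcal R_j\partial_t(u_iu_j)+u\cdot\nabla\pi_1$. From \eqref{I4}, $\partial_t u=-\mathbb P(u\cdot\nabla u)+\mu\Delta u+\mathbb P(\omega-u)$, so $\partial_t(u_iu_j)$ is a sum of three kinds of terms:
\begin{itemize}
\item cubic terms $u\,\mathbb P(u\cdot\nabla u)$;
\item viscous terms $u\,\Delta u$;
\item drag terms $u\,\mathbb P(\omega-u)$.
\end{itemize}
Together with $u\cdot\nabla\pi_1$, each term contains a factor that is square-integrable in time: $\nabla u$, $\nabla^2u$, or $u-\omega$, while the remaining factor is bounded in $H^3$. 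For $\|\nabla D_t^u\pi_1\|_{H^1}$ we need $\nabla^2$ of products involving $\Delta u$, which costs up to $\nabla^4 u$. This is still controlled by the dissipation $\|\nabla u\|_{H^3}$ in \eqref{TGG2}. The hyperbolic phase is the delicate one. For $D_t^\omega\pi_2$ the time derivative of $\omega$ produces $\mathbb P(u-\omega)$ and $\mathbb P(\omega\cdot\nabla\omega)$, with no derivative loss. In $H^2$ these cost at most $\|u-\omega\|_{H^2}$ and $\|\nabla\omega\|_{H^2}$, both of which are in \eqref{TGG2}. Products are estimated by $\|fg\|_{H^2}\lesssim\|f\|_{H^2}\|g\|_{H^2}$, or by Lemma \ref{L2.1} when one factor carries the dissipation.

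\emph{Step 4: the bound \eqref{TGGG3}.} This is the main obstacle, since the $\dot H^{-1}$ norm in three dimensions requires some care at low frequency. I would use the divergence form, via Lemma \ref{L2.3}: $\|f\|_{\dot H^{-1}}\lesssim \|f\|_{L^{6/5}}$. For instance, $u\cdot\nabla\pi_1$ is estimated in $L^{6/5}$ by $\|u\|_{L^3}\|\nabla\pi_1\|_{L^2}$, and $u\,\mathbb P(\omega-u)$ by $\|u\|_{L^3}\|u-\omega\|_{L^2}$. The viscous term $u_i\Delta u_j$ is bounded by $\|u\|_{L^3}\|\Delta u\|_{L^2}$. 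The cubic terms are bounded by $\|u\|_{L^\infty}\|u\|_{L^3}\|\nabla u\|_{L^2}$. Note that $\mathcal R_i\mathcal R_j$ is bounded on $L^{6/5}$ and on $\dot H^{-1}$. Each $L^{6/5}$ bound is therefore (bounded in time) $\times$ (square-integrable in time), and squaring and integrating gives $\|U_0\|_{H^3}^2$ by \eqref{TGG2}. If the drag term causes trouble, it can be placed instead in $L^2$ through $\dot H^{-1}\supset L^{6/5}$, as above. The only requirement is that $u-\omega$ enters linearly, which it does.
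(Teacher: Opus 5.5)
Your proposal is correct and follows essentially the same route as the paper. Both arguments use the $L^2$-boundedness of $\mathcal R_i\mathcal R_j$ and product estimates for \eqref{TGGG1} and for the $\nabla\pi_j$ part of \eqref{TGGG2}. Both then use the equations to express $\partial_t$ of the quadratic terms, so that each piece carries a factor that is square-integrable in time by \eqref{TGG2}: up to four derivatives of $u$, $u-\omega$, $\nabla(u,\omega)$, or $\nabla\pi_j$. Finally, both obtain \eqref{TGGG3} from the embedding $L^{6/5}\hookrightarrow\dot H^{-1}$ with an $L^3\times L^2$ H\"older split.

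The differences are cosmetic. The paper does not expand $\partial_t u,\partial_t\omega$ term by term through $\mathbb P$; it bounds $\|(u_t,\omega_t)\|_{H^2}$ once from \eqref{I4}, keeping $\nabla\pi_j$, and then uses $\|\partial_t(u\otimes u)\|_{H^2}\lesssim\|u_t\|_{H^2}\|u\|_{H^3}$. Your closing aside about placing the drag term ``in $L^2$'' should only mean putting the factor $u-\omega$ in $L^2$ inside the $L^{6/5}$ pairing, since $L^2\not\subset\dot H^{-1}$ in $\mathbb R^3$.
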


\begin{proof}

Applying the \(L^2\)-boundedness of Riesz transforms to \eqref{pi1}, we get
\begin{align*}
\sup_{t\geq 0}\|(\pi_1,\pi_{2})(t)\|_{H^3}\lesssim \sum_{i,j=1}^3\sup_{t\geq 0}\|(u_{i}u_{j},\omega_{i}\omega_{j})(t)\|_{H^3}\lesssim \sup_{t\geq 0} \|(u,\omega)(t)\|_{H^3}^2\lesssim  \|U_0\|_{H^3}, 
\end{align*}
which proves \eqref{TGGG1}. 

Next, we prove \eqref{TGGG2}. By a direct computation, we have
\begin{equation*}
\|\nabla(\pi_1,\pi_{2})\|_{H^2}
\lesssim
\|\nabla(u\otimes u)\|_{H^2}+\|\nabla(\omega\otimes\omega)\|_{H^2}
\lesssim
\|(u,\omega)\|_{H^3}\|\nabla (u,\omega)\|_{H^2},
\end{equation*}
 which, together with \eqref{TGG2}, yields
\begin{align}\label{G5.1}
\int_0^\infty \|\nabla(\pi_1,\pi_{2})(\tau)\|_{H^2}^2{\rm d}\tau  \lesssim \delta_1^2  \int_0^\infty  \|\nabla (u,\omega)(\tau)\|_{H^2}^2 {\rm d}\tau\lesssim \|U_0\|_{H^3}^2.
\end{align}
To estimate   the second integrand in the integral on the left-hand side of \eqref{TGGG2}, 
 we  use the boundedness of Riesz transforms  and product estimates to  get
\begin{align}\label{G5.2}
\|\nabla\partial_t(\pi_1,\pi_{2})\|_{H^1}
\lesssim&\,
\|\partial_t(u\otimes u)\|_{H^2}+\|\partial_t(\omega\otimes\omega)\|_{H^2}\nonumber\\
\lesssim&\, 
\|(u_t,\omega_{t})\|_{H^2}\|(u,\omega)\|_{H^3}. 
\end{align}
Moreover, we compute
\begin{align}\label{G5.3}
\|\nabla(u\cdot\nabla\pi_1)\|_{H^1}+\|\nabla(\omega\cdot\nabla\pi_2)\|_{H^1}
\lesssim&\,
\|u\cdot\nabla\pi_1\|_{H^2}+\|\omega\cdot\nabla\pi_2\|_{H^2}\nonumber\\
\lesssim&\, 
\|(u,\omega)\|_{H^3}\|\nabla(\pi_1,\pi_{2})\|_{H^2}.
\end{align}
Combining \eqref{G5.2} and \eqref{G5.3} yields
\begin{align}\label{G5.4}
\|\nabla (D_t^u\pi_1,D_t^\omega\pi_2)\|_{H^1} 
\lesssim&\,
\|(u,\omega)\|_{H^3}
\big(
\|(u_t,\omega_{t})\|_{H^2}
+
\|\nabla(\pi_1,\pi_{2})\|_{H^2}\big).
\end{align}
It suffices to estimate the time derivative $\|(u_{t},\omega_{t})\|_{H^2}$. Using the equations \eqref{I4}$_1$ and \eqref{I4}$_3$, we obtain
\begin{align}\label{G5.5}
\|(u_t,\omega_{t})\|_{H^2}
\lesssim&\,
\|\nabla u\|_{H^3}
+
\|(u,\omega)\|_{H^3}\|\nabla (u,\omega)\|_{H^2}
+
\|\nabla(\pi_1,\pi_{2})\|_{H^2}
+
\|u-\omega\|_{H^2}
\nonumber\\
\lesssim&\,
\|\nabla u\|_{H^3}
+
\|u-\omega\|_{H^2}
+\delta_1\|\nabla (u,\omega)\|_{H^2}+\|\nabla(\pi_1,\pi_{2})\|_{H^2}.
\end{align}
Plugging the estimate \eqref{G5.5} into \eqref{G5.4} yields
\begin{align*}
&\qquad\int_0^\infty\|(\nabla D_{t}^{u}\pi_1,\nabla D_{t}^{\omega}\pi_2)(\tau)\|_{H^1}^2 {\rm d}\tau\nonumber\\
&\quad\lesssim \,\delta_1^2\int_0^\infty  \big( \|\nabla{u}(\tau)\|_{H^3}^2    +\| ({u}-\omega)(\tau)\|_{H^2}^2+\|\nabla \omega(\tau)\|_{H^2}^2\big){\rm d}\tau \nonumber\\ &\qquad+\delta_1^2\int_0^\infty\|\nabla(\pi_1,\pi_2)(\tau)\|_{H^2}^2 {\rm d}\tau \nonumber\\
&\quad \lesssim  \|U_0\|_{H^3}^2.
\end{align*}
Combining this with \eqref{G5.1} yields \eqref{TGGG2}.

Finally, we prove \eqref{TGGG3}. By Lemma \ref{L2.3} and the Sobolev inequality $\|g\|_{\dot H^{-1}}\lesssim\|g\|_{L^{6/5}}$, we obtain
\begin{align}\label{G5.6}
\|\partial_t(\pi_1,\pi_{2})\|_{\dot H^{-1}}
\lesssim&\,
\|u_t\otimes u\|_{\dot H^{-1}}+\|\omega_t\otimes \omega\|_{\dot H^{-1}}\nonumber\\
\lesssim&\,
\|u_t u\|_{L^{\frac{6}{5}}}+\|\omega_t \omega\|_{L^{\frac{6}{5}}}\nonumber\\
\lesssim&\,
\|(u_t,\omega_{t})\|_{L^2}\|(u,\omega)\|_{L^3}.  
\end{align}
Similarly, we can control the transport term as follows:
\begin{align}\label{G5.7}
 \|(u\cdot\nabla\pi_1,\omega\cdot\nabla \pi_{2})\|_{\dot H^{-1}}
\lesssim\,
\|(u\cdot\nabla\pi_1,\omega\cdot\nabla\pi_2)\|_{L^{\frac65}}
\lesssim\,
\|(u,\omega)\|_{L^3}\|\nabla(\pi_1,\pi_2)\|_{L^2}.   
\end{align}
Then combining the estimates \eqref{G5.1}, \eqref{G5.5}, \eqref{G5.6} and \eqref{G5.7}, we have
\begin{align*}
&\int_0^\infty
\|(D_t^u\pi_1,D_t^\omega\pi_2)(\tau)\|_{\dot H^{-1}}^2
{\rm d}\tau\nonumber\\
&\qquad\lesssim
\sup_{\tau\geq0}\|(u,\omega)(\tau)\|_{H^3}^2
\int_0^\infty
\big(
\|(u_t,\omega_t)(\tau)\|_{L^2}^2
+
\|\nabla(\pi_1,\pi_2)(\tau)\|_{L^2}^2
\big)
 {\rm d}\tau\nonumber\\
 &\qquad\lesssim \|U_0\|_{H^3}^2,   
\end{align*}
which proves \eqref{TGGG3}. Therefore, the proof of Proposition \ref{Ppressure} is complete.
\end{proof}

\section{Quantitative low Mach number limit}

In this section, we prove Theorem \ref{Th3} on the convergence rate of low Mach number limit
to the scaled compressible two-phase  system \eqref{I3}. Let $(q^\varepsilon,u^\varepsilon, r^\varepsilon,\omega^\varepsilon)$ be the solution to the   problem \eqref{I3}--\eqref{I3-1}, and $(u,\pi_1,\omega,\pi_2)$ be the solution to the limiting two-phase problem \eqref{I4}--\eqref{I4-1}. We introduce the following perturbation
\begin{align*}
(\tilde{q},\tilde{u},\tilde{r},\tilde{\omega}):=\left(q^\varepsilon-\varepsilon [P'(1)]^{-1}\pi_1,u^\varepsilon-u, r^\varepsilon-\varepsilon\pi_2, \omega^\varepsilon-\omega \right).
\end{align*}
Then it is straightforward to check that  $(\tilde{q},\tilde{u},\tilde{r},\tilde{\omega})$ satisfies
\begin{equation}\label{G6.1}
\left\{
\begin{aligned}
&\partial_t\tilde{q}
+u^\varepsilon\cdot\nabla\tilde{q}
+\frac{1+\varepsilon q^\varepsilon}{\varepsilon}
\operatorname{div}\tilde{u}
=
\tilde{F}_1,
\\
&\partial_t\tilde{u}
+u^\varepsilon\cdot\nabla\tilde{u}
+\frac1{\varepsilon}
\frac{P'(1+\varepsilon q^\varepsilon)}
     {1+\varepsilon q^\varepsilon}
\nabla\tilde{q}
-\mu\Delta\tilde{u}
-(\mu+\lambda)\nabla\operatorname{div}\tilde{u}
-(\tilde{\omega}-\tilde{u})
=
\tilde{F}_2,
\\
&\partial_t\tilde{r}
+\omega^\varepsilon\cdot\nabla\tilde{r}
+\frac{1+\varepsilon r^\varepsilon}{\varepsilon}
\operatorname{div}\tilde{\omega}
=
\tilde{F}_3,
\\
&\partial_t\tilde{\omega}
+\omega^\varepsilon\cdot\nabla\tilde{\omega}
+\frac1{\varepsilon}
\frac1{1+\varepsilon r^\varepsilon}
\nabla\tilde{r}
-(\tilde{u}-\tilde{\omega})
=
\tilde{F}_4,
\end{aligned}
\right.
\end{equation}
where the source terms $\tilde{F}_i(i=1,2,3, 4)$ are given by
\begin{equation*}
\left\{
\begin{aligned}
\tilde{F}_1
:=&\,
-\varepsilon [P'(1)]^{-1}
\big(D_t^u\pi_1+\tilde{u}\cdot\nabla\pi_1\big),
\\
\tilde{F}_2
:=&\,
-\tilde{u}\cdot\nabla u
-\Big(
[P'(1)]^{-1}
\frac{P'(1+\varepsilon q^\varepsilon)}
     {1+\varepsilon q^\varepsilon}
-1
\Big)\nabla\pi_1-\frac{\mu\varepsilon q^\varepsilon}
      {1+\varepsilon q^\varepsilon}
\Delta{u}^\varepsilon
\\
&\,
-\frac{(\mu+\lambda)\varepsilon q^\varepsilon}
      {1+\varepsilon q^\varepsilon}
\nabla\operatorname{div}u^\varepsilon
+\frac{\varepsilon(r^\varepsilon-q^\varepsilon)}
       {1+\varepsilon q^\varepsilon}
(\omega-u)
+\frac{\varepsilon(r^\varepsilon-q^\varepsilon)}
       {1+\varepsilon q^\varepsilon}
(\tilde{\omega}-\tilde{u}),
\\
\tilde{F}_3
:=&\,
-\varepsilon
\big(D_t^\omega\pi_2+\tilde{\omega}\cdot\nabla\pi_2\big),
\\
\tilde{F}_4
:=&\,
-\tilde{\omega}\cdot\nabla\omega
+\frac{\varepsilon r^\varepsilon}
      {1+\varepsilon r^\varepsilon}
\nabla\pi_2 .
\end{aligned}
\right.
\end{equation*}

We now define the error functional $\tilde{\mathcal{X}}(t)$ as follows:
\begin{align}\label{G6.2}
\tilde{\mathcal{X}}(t):=&\,\sup_{\tau\in [0,t]}\|(\tilde{q},\tilde{u},\tilde{r},\tilde{\omega})(\tau)\|_{H^2}^2 +\int_0^t   \|\nabla\tilde{u}(\tau)\|_{H^2}^2 {\rm d}\tau \nonumber\\
 &+\int_0^t \big(\|\nabla (\tilde{q},\tilde{r})(\tau)\|_{H^1}^2+ \|(\tilde{u}-\tilde{\omega})(\tau)\|_{H^2}^2    \big) {\rm d}\tau.
\end{align}
Note that
\begin{align*}
\|\nabla \tilde{\omega}\|_{H^1}\lesssim \|(\tilde{u}-\tilde{\omega})\|_{H^2}+\|\nabla  \tilde{u}\|_{H^1}.
\end{align*}
Then we obtain
\begin{align*}
\int_0^t \|\nabla \tilde{\omega} (\tau)\|_{H^1}^2\,{\rm d}\tau \lesssim  \tilde{\mathcal{X}}(t).  
\end{align*}

First, we derive the zero-order estimate of $(\tilde{q},\tilde{u},\tilde{r},\tilde{\omega})$.
\begin{lem}\label{L6.1}
The solution $(\tilde{q},\tilde{u},\tilde{r},\tilde{\omega})$ to \eqref{G6.1} satisfies
\begin{align}\label{G6.3}
 &\sup_{0\leq \tau\leq t}  \big ( P^\prime(1) \|\tilde{q}(\tau)\|_{L^2}^2+ \|(\tilde{u},\tilde{r},\tilde{\omega})(\tau)\|_{L^2}^2  \big)+\int_0^t\big( \|\nabla\tilde{u}(\tau) \|_{L^2}^2
 + \|(\tilde{u}-\tilde{\omega})(\tau)\|_{L^2}^2\big
 )\,{\rm d}\tau\nonumber\\
 &\quad\leq \tilde{\mathcal{X }}(0)+C\varepsilon^2+C\big(\delta_0+\delta_1\big) \tilde{\mathcal{X}}(t),
\end{align}
where $\delta_0$ and $\delta_1$ are given in \eqref{TG1} and \eqref{TGG1} respectively.
\end{lem}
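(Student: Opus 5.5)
I will prove Lemma \ref{L6.1} by mimicking the proof of Lemma \ref{L3.1} on the error system \eqref{G6.1}. Concretely, I take the $L^2$ inner product of \eqref{G6.1}$_1$--\eqref{G6.1}$_4$ with $(P'(1)\tilde q,\tilde u,\tilde r,\tilde\omega)$ and integrate over $[0,t]$. The linear part produces
\[
\tfrac12\tfrac{\rm d}{{\rm d}t}\big(P'(1)\|\tilde q\|_{L^2}^2+\|(\tilde u,\tilde r,\tilde\omega)\|_{L^2}^2\big)+\mu\|\nabla\tilde u\|_{L^2}^2+(\mu+\lambda)\|{\rm div}\,\tilde u\|_{L^2}^2+\|\tilde u-\tilde\omega\|_{L^2}^2 .
\]
The singular terms cancel at leading order: $\varepsilon^{-1}\int\operatorname{div}\tilde u\,P'(1)\tilde q+\varepsilon^{-1}P'(1)\int\nabla\tilde q\cdot\tilde u=0$, and similarly for $(\tilde r,\tilde\omega)$. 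What remains are the lower-order commutator-type remainders
\[
\int q^\varepsilon\operatorname{div}\tilde u\,\tilde q,\qquad \varepsilon^{-1}\int\Big(\tfrac{P'(1+\varepsilon q^\varepsilon)}{1+\varepsilon q^\varepsilon}-P'(1)\Big)\nabla\tilde q\cdot\tilde u,
\]
together with their analogues for $(\tilde r,\tilde\omega)$. Each has coefficient bounded by $C|q^\varepsilon|$ or $C|r^\varepsilon|$. Using Theorem \ref{Th1} ($\|q^\varepsilon\|_{H^3}\lesssim\delta_0$) and H\"older $L^3$--$L^2$--$L^6$, I bound them by $\delta_0\|\nabla(\tilde q,\tilde u,\tilde r,\tilde\omega)\|_{L^2}\|(\tilde q,\tilde u,\tilde r,\tilde \omega)\|_{H^1}$. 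After time integration, the bound $\|\nabla\tilde\omega\|_{L^2}\lesssim\|\tilde u-\tilde\omega\|_{H^1}+\|\nabla\tilde u\|_{L^2}$ shows these are $\lesssim\delta_0\tilde{\mathcal X}(t)$. The transport terms $u^\varepsilon\cdot\nabla\tilde q\,\tilde q$ are handled by integrating by parts into ${\rm div}\,u^\varepsilon|\tilde q|^2$, but this is not directly integrable in time. I would instead bound it by $\|\nabla u^\varepsilon\|_{L^3}\|\tilde q\|_{L^2}\|\tilde q\|_{L^6}\lesssim \|\nabla u^\varepsilon\|_{H^1}\sup\|\tilde q\|_{L^2}\|\nabla\tilde q\|_{L^2}$. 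Cauchy--Schwarz in time against $\int\|\nabla u^\varepsilon\|_{H^1}^2\lesssim\delta_0^2$ and $\int\|\nabla \tilde q\|^2\le\tilde{\mathcal X}$ then gives $\delta_0\tilde{\mathcal X}(t)$.

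Next I treat the sources $\tilde F_i$. The terms $-\tilde u\cdot\nabla u$ and $-\tilde\omega\cdot\nabla\omega$ paired with $\tilde u,\tilde\omega$ are bounded by $\|\nabla (u,\omega)\|_{L^3}\|(\tilde u,\tilde\omega)\|_{L^2}\|(\tilde u,\tilde\omega)\|_{L^6}$ and treated exactly as above, now using \eqref{TGG2} so that $\int\|\nabla(u,\omega)\|_{H^2}^2\lesssim\delta_1^2$. The $\tilde u\cdot\nabla\pi_1$ piece of $\tilde F_1$ is likewise bounded using Proposition \ref{Ppressure}. The drag correction $\varepsilon(r^\varepsilon-q^\varepsilon)(1+\varepsilon q^\varepsilon)^{-1}(\tilde\omega-\tilde u)\cdot\tilde u$ is $\lesssim\varepsilon\delta_0\|\tilde u-\tilde\omega\|_{L^2}\|\tilde u\|_{L^6}$, hence absorbable. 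The genuine $O(\varepsilon)$ forcings are
\[
\varepsilon [P'(1)]^{-1}D_t^u\pi_1,\quad \Big([P'(1)]^{-1}\tfrac{P'(1+\varepsilon q^\varepsilon)}{1+\varepsilon q^\varepsilon}-1\Big)\nabla\pi_1=O(\varepsilon q^\varepsilon)\nabla\pi_1,\quad \tfrac{\varepsilon q^\varepsilon}{1+\varepsilon q^\varepsilon}\Delta u^\varepsilon,\quad \varepsilon (r^\varepsilon-q^\varepsilon)(\omega-u),\quad \varepsilon r^\varepsilon\nabla\pi_2 .
\]
Each is of the form $\varepsilon\times(\text{small})$ and time-square-integrable by Theorem \ref{Th1} and Proposition \ref{Ppressure}. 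I pair it with $\tilde u$ in $L^6$ (or integrate the viscous one by parts, giving $\varepsilon\int q^\varepsilon\nabla u^\varepsilon\!:\!\nabla\tilde u$ plus lower order) and use Young: $\le \eta\|\nabla\tilde u\|_{L^2}^2+C\varepsilon^2(\cdots)^2$, whose time integral is $C\varepsilon^2$ by \eqref{TG2}, \eqref{TGG2} and \eqref{TGGG2}.

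The main obstacle is the pairing $\varepsilon\int D_t^u\pi_1\,\tilde q$, and similarly $\varepsilon\int D_t^\omega\pi_2\,\tilde r$. Since the zero-order density $\tilde q$ has no time-integrable $L^2$ dissipation (only $\nabla\tilde q$ does), I would use the $\dot H^{-1}$ estimate \eqref{TGGG3}:
\[
\varepsilon\Big|\int D_t^u\pi_1\,\tilde q\Big|\le\varepsilon\|D_t^u\pi_1\|_{\dot H^{-1}}\|\nabla\tilde q\|_{L^2}\le \eta\|\nabla\tilde q\|_{L^2}^2+C\varepsilon^2\|D_t^u\pi_1\|_{\dot H^{-1}}^2 .
\]
After time integration, the first term is bounded by $\eta\tilde{\mathcal X}(t)$ and the second by $C\varepsilon^2$. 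Strictly, the $\eta\tilde{\mathcal X}$ term must carry a small coefficient; since $\|D_t^u\pi_1\|_{\dot H^{-1}}$ is quadratic in $(u,\omega)$, I can take $\eta\sim\delta_1$ so that the contribution is $\delta_1\tilde{\mathcal X}(t)+C\varepsilon^2$. Collecting all terms, absorbing $\eta\int\|\nabla\tilde u\|^2$ into the left side, and bounding the initial energy by $\tilde{\mathcal X}(0)$ yields \eqref{G6.3}.
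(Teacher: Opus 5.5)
Your proposal is correct and follows essentially the paper's proof. It uses the same multipliers $(P'(1)\tilde q,\tilde u,\tilde r,\tilde\omega)$ and the same leading-order cancellation of the $\varepsilon^{-1}$ terms, leaving coefficients $O(|q^\varepsilon|,|r^\varepsilon|)$. It then applies H\"older--Sobolev with Cauchy--Schwarz in time against the dissipation from Theorems \ref{Th1}--\ref{Th2} and Proposition \ref{Ppressure}, and handles $\varepsilon\int D_t^u\pi_1\,\tilde q$ by $\dot H^{-1}$--$\dot H^1$ duality with \eqref{TGGG3}.

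One small correction concerns the singular remainders. Your H\"older $L^3$--$L^2$--$L^6$ step, combined with $\|\cdot\|_{L^6}\lesssim\|\nabla\cdot\|_{L^2}$, actually gives $\delta_0\|\nabla(\tilde q,\tilde r)\|_{L^2}\|\nabla(\tilde u,\tilde\omega)\|_{L^2}$, with two gradients, and you should record it that way. The looser bound $\delta_0\|\nabla(\cdot)\|_{L^2}\|(\cdot)\|_{H^1}$ that you wrote, with only a constant $\delta_0$ in front, would not be time-integrable.
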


\begin{proof}
Multiplying \eqref{G6.1}$_1$--\eqref{G6.1}$_4$ by
$ P'(1)\tilde{q}, \tilde{u}, \tilde{r},$ and $ \tilde{\omega}$, respectively,  and integrating by parts, we obtain
\begin{align}\label{G6.4}
&\frac12\frac{\rm d}{{\rm d}t}
\big(
P'(1)\|\tilde{q}\|_{L^2}^2
+\|(\tilde{u},\tilde{r},\tilde{\omega})\|_{L^2}^2
\big)
+\mu\|\nabla\tilde{u}\|_{L^2}^2
+(\mu+\lambda)\|\operatorname{div}\tilde{u}\|_{L^2}^2
+\|\tilde{u}-\tilde{\omega}\|_{L^2}^2
\nonumber\\
=&\,
\frac{P'(1)}2\int_{\mathbb R^3}|\tilde{q}|^2\operatorname{div}u^\varepsilon {\rm d}x
+\frac12\int_{\mathbb R^3}|\tilde{u}|^2\operatorname{div}u^\varepsilon {\rm d}x
+\frac12\int_{\mathbb R^3}|\tilde{r}|^2\operatorname{div}\omega^\varepsilon\,{\rm d}x
+\frac12\int_{\mathbb R^3}|\tilde{\omega}|^2\operatorname{div}\omega^\varepsilon\,{\rm d}x
\nonumber\\
&
-\frac1\varepsilon\int_{\mathbb R^3}
\Big(
P'(1)(1+\varepsilon q^\varepsilon)
-\frac{P'(1+\varepsilon q^\varepsilon)}
      {1+\varepsilon q^\varepsilon}
\Big)\tilde{q} \operatorname{div}\tilde{u}{\rm d}x
+\frac1\varepsilon\int_{\mathbb R^3}
\tilde{q} \nabla\Big(
\frac{P'(1+\varepsilon q^\varepsilon)}
     {1+\varepsilon q^\varepsilon}
\Big)\cdot\tilde{u} {\rm d}x
\nonumber\\
&
-\frac1\varepsilon\int_{\mathbb R^3}
\Big(
1+\varepsilon r^\varepsilon-\frac1{1+\varepsilon r^\varepsilon}
\Big)\tilde{r}\operatorname{div}\tilde{\omega} {\rm d}x
+\frac1\varepsilon\int_{\mathbb R^3}
\tilde{r} \nabla\Big(
\frac1{1+\varepsilon r^\varepsilon}
\Big)\cdot\tilde{\omega} {\rm d}x
\nonumber\\
&
-\varepsilon\int_{\mathbb R^3}
\big(
\tilde{q}\,D_t^u\pi_1+\tilde{r}D_t^\omega\pi_2
\big) {\rm d}x
-\varepsilon\int_{\mathbb R^3}
\big(
\tilde{q} \tilde{u}\cdot\nabla\pi_1
+\tilde{r} \tilde{\omega}\cdot\nabla\pi_2
\big) {\rm d}x-\int_{\mathbb R^3}
(\tilde{u}\cdot\nabla u)\cdot\tilde{u} {\rm d}x
\nonumber\\
&
-\int_{\mathbb R^3}
(\tilde{\omega}\cdot\nabla\omega)\cdot\tilde{\omega} {\rm d}x
-\int_{\mathbb R^3}
\Big(
[P'(1)]^{-1}
\frac{P'(1+\varepsilon q^\varepsilon)}
     {1+\varepsilon q^\varepsilon}
-1
\Big)\nabla\pi_1\cdot\tilde{u} {\rm d}x
+\int_{\mathbb R^3}
\frac{\varepsilon r^\varepsilon}{1+\varepsilon r^\varepsilon}
\nabla\pi_2\cdot\tilde{\omega}{\rm d}x
\nonumber\\
&-\int_{\mathbb R^3}
\Big(
\frac{\mu\varepsilon q^\varepsilon}
     {1+\varepsilon q^\varepsilon}
\Delta u^\varepsilon
+
\frac{(\mu+\lambda)\varepsilon q^\varepsilon}
     {1+\varepsilon q^\varepsilon}
\nabla\operatorname{div}u^\varepsilon
\Big)\cdot\tilde{u}{\rm d}x
+\int_{\mathbb R^3}
\frac{\varepsilon(r^\varepsilon-q^\varepsilon)}
     {1+\varepsilon q^\varepsilon}
(\omega-u)\cdot\tilde{u} {\rm d}x
\nonumber\\
& 
+\int_{\mathbb R^3}
\frac{\varepsilon(r^\varepsilon-q^\varepsilon)}
     {1+\varepsilon q^\varepsilon}
(\tilde{\omega}-\tilde{u})\cdot \tilde{u} {\rm d}x
\equiv:\,
\sum_{i=1}^{17}\tilde{ I}_i.
\end{align}
Now we estimate the right-hand side of \eqref{G6.4}. 

First, for $\tilde{I}_1,\dots,\tilde{I}_4$,  we use  \eqref{TG2} and \eqref{TGG2} to obtain
\begin{align}\label{G6.5}
\sum_{i=1}^4\int_0^t|\tilde{I}_i|\,{\rm d}\tau
\lesssim&\,
\int_0^t
\|\nabla(u^\varepsilon,\omega^\varepsilon)\|_{H^2}
\|(\tilde{q}, \tilde{u}, \tilde{r}, \tilde{\omega}))\|_{H^1}
\|\nabla(\tilde{q}, \tilde{u}, \tilde{r}, \tilde{\omega})\|_{L^2}{\rm d}\tau
\nonumber\\
\lesssim&\,\|(\tilde{q}, \tilde{u}, \tilde{r}, \tilde{\omega})\|_{L_t^\infty(H^1)} \|\nabla(\tilde{q}, \tilde{u}, \tilde{r}, \tilde{\omega})\|_{L_t^2(L^2)}  \|\nabla(u^\varepsilon,\omega^\varepsilon)\|_{L_t^2(H^2)}        \nonumber \\
\lesssim&\,
\delta_0\tilde{\mathcal X}(t).
\end{align}
For the singular terms $\tilde{ I}_5, \dots, \tilde{ I}_8$, we observe that
\begin{align*}
\frac1\varepsilon
\Big|
P'(1)(1+\varepsilon q^\varepsilon)
-\frac{P'(1+\varepsilon q^\varepsilon)}
      {1+\varepsilon q^\varepsilon}
\Big|
\lesssim&  |q^\varepsilon|,  \qquad
\frac1\varepsilon
\Big|
1+\varepsilon r^\varepsilon
-\frac1{1+\varepsilon r^\varepsilon}
\Big|
\lesssim  |r^\varepsilon|,
\end{align*}
and
\begin{align*}
\frac1\varepsilon
\Big|
\nabla\Big(
\frac{P'(1+\varepsilon q^\varepsilon)}
     {1+\varepsilon q^\varepsilon}
\Big)\Big|
\lesssim |\nabla q^\varepsilon|,
\qquad
\frac1\varepsilon
\Big|
\nabla\Big(
\frac1{1+\varepsilon r^\varepsilon}
\Big)\Big|
\lesssim |\nabla r^\varepsilon|.    
\end{align*}
Utilizing these bounds, the terms  $\tilde{I}_5, \dots, \tilde{I}_8$ can be estimated as 
\begin{align}\label{G6.6}
\sum_{i=5}^8\int_0^t|\tilde{I}_i| {\rm d}\tau
\lesssim&\,
\int_0^t
\|(q^\varepsilon,r^\varepsilon)\|_{H^2}
\|\nabla(\tilde{q},\tilde{ r})\|_{L^2}
\|\nabla(\tilde{ u},\tilde{\omega})\|_{L^2} {\rm d}\tau
\nonumber\\
\lesssim&\, \|(q^\varepsilon,r^\varepsilon)\|_{L_t^\infty(H^2)}  \|\nabla(\tilde{q},\tilde {r})\|_{L_t^2(L^2)} \|\nabla(\tilde{ u},\tilde{\omega})\|_{L_t^2(L^2)}     \nonumber\\
\lesssim&\,
\delta_0\tilde{\mathcal X}(t).
\end{align}
For the material derivative term \(\tilde{I}_9\),  we use   Proposition \ref{Ppressure} to obtain
\begin{align}\label{G6.7}
\int_0^t |\tilde{I}_9| {\rm d}\tau
\lesssim&\,
\varepsilon
\int_0^t
\big(
\|D_t^u\pi_1\|_{\dot H^{-1}}\|\nabla\tilde{ q}\|_{L^2}
+
\|D_t^\omega\pi_2\|_{\dot H^{-1}}\|\nabla\tilde{ r}\|_{L^2}
\big) {\rm d}\tau
\nonumber\\
\leq&\,   \varepsilon \big(\|D_{t}^u\pi_1\|_{L_t^2(\dot H^{-1})} \|\nabla\tilde{ q}\|_{L_t^2(L^2)}+\|D_{t}^\omega\pi_2\|_{L_t^2(\dot H^{-1})} \|\nabla\tilde{ r}\|_{L_t^2(L^2)} \big) \nonumber\\
\leq&\,
C\varepsilon^2
+
C\delta_1\tilde{\mathcal X}(t).
\end{align}
For \(\tilde{ I}_{10}\), we use the pressure estimate \eqref{TGGG2} to obtain
\begin{align}\label{G6.8}
\int_0^t |\tilde{ I}_{10}| {\rm d}\tau
\lesssim&\,
\varepsilon
\int_0^t
(
\|\tilde{ q}\|_{L^3}\|\tilde{ u}\|_{L^6}\|\nabla\pi_1\|_{L^2}
+
\|\tilde{ r}\|_{L^3}\|\tilde{\omega}\|_{L^6}\|\nabla\pi_2\|_{L^2}
 ) {\rm d}\tau
\nonumber\\
\lesssim&\, \varepsilon \|(\tilde{ q},\tilde{ r})\|_{L_t^\infty(H^1)} \|\nabla(\pi_1,\pi_2)\|_{L_t^2(L^2)}\|\nabla(\tilde{ u},\tilde{\omega})\|_{L_t^2(L^2)}  \nonumber\\
\lesssim&\,
\delta_1\tilde{\mathcal X}(t).
\end{align}
For the convection terms  \(\tilde{I}_{11}\) and \(\tilde{I}_{12}\), we obtain
\begin{align}\label{G6.9}
\int_0^t \big(|\tilde{ I}_{11}|+|\tilde{ I}_{12}|\big){\rm d}\tau
\lesssim&\,
\int_0^t
\big(
\|\tilde{ u}\|_{L^3}\|{ u}\|_{L^6}\|\nabla\tilde{ u}\|_{L^2}
+
\|\tilde{\omega}\|_{L^3}\|{\omega}\|_{L^6}\|\nabla\tilde{\omega}\|_{L^2}
\big){\rm d}\tau
\nonumber\\
\lesssim&\,  \|(\tilde{ u},\tilde{\omega})\|_{L_t^\infty(H^1)}\|\nabla(u,\omega)\|_{L_t^2(L^2)} \|\nabla(\tilde{ u},\tilde{\omega})\|_{L_t^2(L^2)}           \nonumber\\
\lesssim&\,
\delta_1\tilde{\mathcal X}(t).
\end{align}
For the pressure terms \(\tilde{ I}_{13}\) and \(\tilde{ I}_{14}\), note that
\begin{align*}
 \Big|
[P'(1)]^{-1}
\frac{P'(1+\varepsilon q^\varepsilon)}
     {1+\varepsilon q^\varepsilon}
-1
\Big|
\lesssim
\varepsilon |q^\varepsilon|,
\qquad
\Big|
\frac{\varepsilon r^\varepsilon}{1+\varepsilon r^\varepsilon}
\Big|
\lesssim
\varepsilon |r^\varepsilon|.
\end{align*}
Then it holds that
\begin{align}\label{G6.10}
\int_0^t\big(|\tilde{ I}_{13}|+|\tilde{I}_{14}|\big) {\rm d}\tau
\lesssim&\,
\varepsilon
\int_0^t
 \big(
\|q^\varepsilon\|_{L^3}\|\nabla\pi_1\|_{L^2}\|\tilde{ u}\|_{L^6}
+
\|r^\varepsilon\|_{L^3}\|\nabla\pi_2\|_{L^2}\|\tilde{\omega}\|_{L^6}
 \big){\rm d}\tau
\nonumber\\
\lesssim&\, \varepsilon \|(q^\varepsilon,r^\varepsilon)\|_{L_t^\infty(H^1)} \|\nabla(\tilde{ u},\tilde{\omega})\|_{L_t^2(L^2)}   \|\nabla(\pi_1,\pi_2)\|_{L_t^2(L^2)} \nonumber\\
\lesssim&\, \delta_1^2\tilde{\mathcal{X}}(t)  +
\delta_0^2\varepsilon^2 .
\end{align}

Fo the viscosity term \(\tilde{ I}_{15}\), we integrate by parts to obtain
\begin{align}\label{G6.11}
\int_0^t|\tilde{ I}_{15}|{\rm d}\tau
\lesssim&\,
\varepsilon
\int_0^t
\|q^\varepsilon\|_{H^2}
\|\nabla u^\varepsilon\|_{H^2}
\|\nabla\tilde{u}\|_{L^2}{\rm d}\tau
\nonumber\\
\lesssim&\,\varepsilon\|q^\varepsilon\|_{L_t^\infty(H^2)}\|\nabla u^\varepsilon\|_{L_t^2(H^2)} \|\nabla\tilde{ u}\|_{L_t^2(L^2)}\nonumber\\
\lesssim&\,\delta_0\tilde{\mathcal{X}}(t) +\delta_0\varepsilon^2.
\end{align}
Finally, for the drag terms $\tilde{I}_{16}$ and $\tilde{ I}_{17}$, we obtain
\begin{align}\label{G6.13}
\int_0^t|\tilde{I}_{16}|+|\tilde{I}_{17}|{\rm d}\tau
\lesssim&\,
\varepsilon
\int_0^t
\|r^\varepsilon-q^\varepsilon\|_{L^3}
\|\tilde{ u}\|_{L^6}\left(\|u-\omega\|_{L^2}+\|\tilde{u}-\tilde{\omega}\|_{L^2}\right){\rm d}\tau
\nonumber\\
\lesssim&\, \varepsilon \|(q^\varepsilon,r^\varepsilon)\|_{L_t^\infty(H^1)} \|\nabla\tilde{ u}\|_{L_t^2(L^2)}\left(\|u-\omega\|_{L_t^2(L^2)}+\|\tilde{u}-\tilde{\omega}\|_{L^2_t(L^2)}\right) \nonumber\\
\lesssim&\, \delta_0\tilde{\mathcal{X}}(t)+\delta_0 \varepsilon^2.
\end{align}

Substituting the bounds \eqref{G6.5}--\eqref{G6.13} into \eqref{G6.4} and integrating the resulting inequality over \([0,t]\), we derive \eqref{G6.3}. The proof of Lemma \ref{L6.1} is complete.
\end{proof}

Next, we derive the higher-order estimates on
\((\tilde{q},\tilde{ u},\tilde{ r},\tilde{\omega})\).
\begin{lem}\label{L6.2}
The solution \((\tilde{q},\tilde{ u},\tilde{ r},\tilde{\omega})\) to \eqref{G6.1} satisfies the following bounds:
\begin{align}\label{G6.14}
&\sup_{0\leq \tau\leq t}\sum_{1\leq|\alpha|\leq 2}
\bigg(
\bigg\|
\sqrt{\frac{P'(1+\varepsilon q^\varepsilon)}
     {1+\varepsilon q^\varepsilon}}
\partial^\alpha\tilde{q}(\tau)
\bigg\|_{L^2}^2
+\Big\|\frac{\partial^\alpha \tilde{ r}(\tau)}{\sqrt{1+\varepsilon r^\varepsilon}} \Big\|_{L^2}^2+
\|(\partial^\alpha\tilde{u},\partial^\alpha\tilde{\omega})(\tau)\|_{L^2}^2
\bigg)
\nonumber\\
&
\quad+
\sum_{1\leq|\alpha|\leq 2}\int_0^t
\big(
\|\nabla\partial^\alpha\tilde{ u}(\tau)\|_{L^2}^2
+
\|(\partial^\alpha\tilde{ u}-\partial^\alpha\tilde{\omega})(\tau)\|_{L^2}^2
\big){\rm d}\tau
\nonumber\\
&
\qquad\leq
C\tilde{\mathcal X}(0)
+
C\varepsilon^2
+
C\big(\delta_0+\delta_1\big)
\tilde{\mathcal X}(t).
\end{align}
\end{lem}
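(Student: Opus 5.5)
We mirror the proof of Lemma \ref{L3.2}, applied now to the error system \eqref{G6.1}, at the $H^2$ level. We apply $\partial^\alpha$ with $1\leq|\alpha|\leq2$ to \eqref{G6.1} and move all commutators of $\partial^\alpha$ with the variable coefficients $u^\varepsilon\cdot\nabla$, $\omega^\varepsilon\cdot\nabla$, $\frac{1+\varepsilon q^\varepsilon}{\varepsilon}{\rm div}$, $\frac{1+\varepsilon r^\varepsilon}{\varepsilon}{\rm div}$, $\frac1\varepsilon\frac{P'(1+\varepsilon q^\varepsilon)}{1+\varepsilon q^\varepsilon}\nabla$ and $\frac1\varepsilon\frac{1}{1+\varepsilon r^\varepsilon}\nabla$ to the right-hand side. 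We then test the four equations with $\frac{P'(1+\varepsilon q^\varepsilon)}{1+\varepsilon q^\varepsilon}\partial^\alpha\tilde q$, $\partial^\alpha\tilde u$, $\frac{\partial^\alpha\tilde r}{1+\varepsilon r^\varepsilon}$ and $\partial^\alpha\tilde\omega$. With these weights the two leading $O(\varepsilon^{-1})$ acoustic couplings cancel after integration by parts, exactly as in \eqref{G3.9}. What remains on the left is the time derivative of the weighted energy, plus $\mu\|\nabla\partial^\alpha\tilde u\|_{L^2}^2+(\mu+\lambda)\|{\rm div}\,\partial^\alpha\tilde u\|_{L^2}^2+\|\partial^\alpha(\tilde u-\tilde\omega)\|_{L^2}^2$. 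We integrate in time over $[0,t]$, sum over $\alpha$, and use that the weights are comparable to $1$ to bound the energy at time $0$ by $C\tilde{\mathcal X}(0)$.

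The error terms fall into three groups.

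\textbf{(i) Terms generated by the symmetrization.} These are the analogues of $J_1,J_3,J_4$: time and space derivatives of the weights, and the $\frac1\varepsilon[\partial^\alpha,\cdot]\nabla\tilde q$ commutators. Each derivative of a weight gains a factor $\varepsilon$, so $\frac1\varepsilon|\nabla(\cdot)|\lesssim|\nabla(q^\varepsilon,r^\varepsilon)|$. By \eqref{G3.10}, $\varepsilon|\partial_t q^\varepsilon|\lesssim\|\nabla(q^\varepsilon,u^\varepsilon,r^\varepsilon,\omega^\varepsilon)\|_{H^2}$. Every such term is therefore bounded by
\[
\|\nabla(q^\varepsilon,u^\varepsilon,r^\varepsilon,\omega^\varepsilon)\|_{H^2}\,\|(\tilde q,\tilde u,\tilde r,\tilde\omega)\|_{H^2}\,\|\nabla(\tilde q,\tilde u,\tilde r,\tilde \omega)\|_{H^1},
\]
or by $\|(q^\varepsilon,r^\varepsilon)\|_{H^3}\|\nabla(\tilde q,\tilde r)\|_{H^1}\|\nabla(\tilde u,\tilde\omega)\|_{H^1}$. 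Integrating in time and applying Hölder with \eqref{TG2} gives $\lesssim\delta_0\tilde{\mathcal X}(t)$. Here we use that $\int_0^t\|\nabla\tilde\omega\|_{H^1}^2\lesssim\tilde{\mathcal X}(t)$.

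\textbf{(ii) Transport commutators.} These are $[\partial^\alpha,u^\varepsilon\cdot\nabla]\tilde f$ and the like. They are handled as in \eqref{G3.12}, with the extra point that only the background carries the top derivative: $\|\nabla^2 u^\varepsilon\|_{L^3}$-type norms are controlled by $\|\nabla u^\varepsilon\|_{H^2}$. This again gives $\lesssim\delta_0\tilde{\mathcal X}(t)$.

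\textbf{(iii) Sources $\partial^\alpha\tilde F_i$.} We estimate each part separately.
\begin{itemize}
\item \emph{The $\tilde u\cdot\nabla u$ and $\tilde\omega\cdot\nabla\omega$ parts of $\tilde F_2$, $\tilde F_4$.} These are bounded by $\|\tilde{\cdot}\|_{H^2}\|\nabla(u,\omega)\|_{H^2}\|\nabla\partial^\alpha\tilde{\cdot}\|$-type products and give $\delta_1\tilde{\mathcal X}(t)$. Alternatively, putting the $L^2_t$ norm on $\nabla(u,\omega)$ and using $\sup_t\|(\tilde u,\tilde \omega)\|_{H^2}$ leads to the same bound.
\item \emph{The viscous part of $\tilde F_2$.} This is $\frac{\varepsilon q^\varepsilon}{1+\varepsilon q^\varepsilon}\Delta u^\varepsilon$, whose $H^2$ norm requires $\nabla u^\varepsilon\in L^2_tH^3$. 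That is available from \eqref{TG2}. After one integration by parts onto $\nabla\partial^\alpha\tilde u$ we get $\varepsilon\delta_0\|\nabla u^\varepsilon\|_{L^2_tH^3}\|\nabla\tilde u\|_{L^2_tH^2}\lesssim\delta_0(\varepsilon^2+\tilde{\mathcal X}(t))$.
\item \emph{The pressure-coefficient parts.} These are $\varepsilon q^\varepsilon\nabla\pi_1$, $\varepsilon r^\varepsilon\nabla\pi_2$ and the drag parts $\varepsilon(r^\varepsilon-q^\varepsilon)(\omega-u)$. They are handled the same way using \eqref{TGGG2}, \eqref{TGG2} and Young's inequality, and produce $C\varepsilon^2+C(\delta_0+\delta_1)\tilde{\mathcal X}(t)$.
\end{itemize}

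The main obstacle is the material-derivative source in $\tilde F_1$ and $\tilde F_3$, namely $-\varepsilon[P'(1)]^{-1}\partial^\alpha D_t^u\pi_1$ tested against $\partial^\alpha\tilde q$ (and the analogue for $\tilde r$). The energy functional $\tilde{\mathcal X}$ only controls $\nabla\tilde q$ in $L^2_tH^1$, not $\tilde q$ itself. To handle this, we integrate one derivative by parts,
\[
\varepsilon\int\partial^\alpha D^u_t\pi_1\,\partial^\alpha\tilde q\,{\rm d}x=-\varepsilon\int\partial^{\alpha-e_j}D^u_t\pi_1\,\partial^{\alpha+e_j}\tilde q\,{\rm d}x,
\]
and bound the result by $\varepsilon\|\nabla D_t^u\pi_1\|_{L^2_tH^1}\|\nabla\tilde q\|_{L^2_tH^1}$, or, when $|\alpha|=1$, by $\varepsilon\|D_t^u\pi_1\|_{L^2_t L^2}$. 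The term $\|D_t^u\pi_1\|_{L^2}$ is in turn controlled by interpolating between $\dot H^{-1}$ and $\dot H^1$ via \eqref{TGGG2}--\eqref{TGGG3}. Young's inequality then gives $C\varepsilon^2+C\delta_1\tilde{\mathcal X}(t)$. The term $\varepsilon\tilde u\cdot\nabla\pi_1$ is simpler and gives $\delta_1\tilde{\mathcal X}(t)$.

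Collecting all bounds yields \eqref{G6.14}.
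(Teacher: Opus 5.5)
Your overall route matches the paper's. It is a symmetrized $H^2$ energy estimate for \eqref{G6.1} with $1\le|\alpha|\le2$. The symmetrization and commutator terms are bounded by $\delta_0\tilde{\mathcal X}(t)$ via \eqref{TG2}, and the sources are controlled by \eqref{TGG2} and Proposition \ref{Ppressure}.

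\textbf{The material-derivative step.} The step you call the main obstacle is misdiagnosed, and the integration by parts you propose there goes the wrong way. Because $|\alpha|\ge1$, $\partial^\alpha\tilde q$ is already a derivative of $\tilde q$. Hence $\|\partial^\alpha\tilde q\|_{L^2}\le\|\nabla\tilde q\|_{H^1}$ and $\|\partial^\alpha D_t^u\pi_1\|_{L^2}\le\|\nabla D_t^u\pi_1\|_{H^1}$. Cauchy--Schwarz together with \eqref{TGGG2} then gives directly
\[
\varepsilon\int_0^t\Big|\int_{\mathbb R^3}\partial^\alpha D_t^u\pi_1\,\partial^\alpha\tilde q\,{\rm d}x\Big|{\rm d}\tau\le\varepsilon\|\nabla D_t^u\pi_1\|_{L^2_t(H^1)}\|\nabla\tilde q\|_{L^2_t(H^1)}\le C\varepsilon^2+C\delta_1^2\tilde{\mathcal X}(t),
\]
which is exactly \eqref{G6.19}. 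Your identity instead produces $\partial^{\alpha+e_j}\tilde q$, which for $|\alpha|=2$ is a third derivative of $\tilde q$. But $\tilde{\mathcal X}(t)$ only controls $\tilde q$ in $L^\infty_t(H^2)$ and $\nabla\tilde q$ in $L^2_t(H^1)$. So the bound you write next does not follow from that identity; it follows only if you skip the integration by parts.

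The lack of control of $\tilde q$ itself matters only in the zero-order Lemma \ref{L6.1}. There the paper pairs $D_t^u\pi_1\in L^2_t(\dot H^{-1})$ from \eqref{TGGG3} with $\nabla\tilde q$. Your $\dot H^{-1}$--$\dot H^{1}$ interpolation for $|\alpha|=1$ is valid but unnecessary.

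The same caution applies to your first option for the convection terms. Moving a derivative onto the test function gives $\nabla\partial^\alpha\tilde\omega$, which is not controlled; at top order only $\tilde u$ can absorb an extra derivative. Your second option is the one that works, and it is the paper's \eqref{G6.20}.

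\textbf{The symmetrizer weights.} You claim the $O(\varepsilon^{-1})$ terms cancel exactly, but with the test functions $\frac{P'(1+\varepsilon q^\varepsilon)}{1+\varepsilon q^\varepsilon}\partial^\alpha\tilde q$ and $\frac{\partial^\alpha\tilde r}{1+\varepsilon r^\varepsilon}$ they do not. For the Euler pair one is left with
\[
\int_{\mathbb R^3}\frac{r^\varepsilon}{1+\varepsilon r^\varepsilon}\,\partial^\alpha\tilde r\,{\rm div}\,\partial^\alpha\tilde\omega\,{\rm d}x .
\]
For $|\alpha|=2$ this involves $\nabla^3\tilde\omega$ (or $\nabla^3\tilde r$ after integrating by parts) and carries no factor $\varepsilon$. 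It is therefore not controlled by the right-hand side of \eqref{G6.14}. The analogous leftover for the Navier--Stokes pair is harmless only because $\nabla^3\tilde u\in L^2_t(L^2)$.

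The exact symmetrizers are $\frac{P'(1+\varepsilon q^\varepsilon)}{(1+\varepsilon q^\varepsilon)^2}$ and $\frac{1}{(1+\varepsilon r^\varepsilon)^2}$. These are the weights that actually appear in the paper's $\tilde J_3$, $\tilde J_5$, $\tilde J_7$, even though its text states the same test functions you used. Since $\frac12\le 1+\varepsilon q^\varepsilon,\,1+\varepsilon r^\varepsilon\le\frac32$, the resulting energy is equivalent to the one in \eqref{G6.14}, so the lemma follows after adjusting $C$. With these two corrections your argument goes through.
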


\begin{proof}
Applying \(\partial^\alpha\) to
\eqref{G6.1}$_1$--\eqref{G6.1}$_4$, and taking inner produce of resulting equations with $\big(\frac{P'(1+\varepsilon q^\varepsilon)}{1+\varepsilon q^\varepsilon} \partial^\alpha\tilde{ q},
\partial^\alpha\tilde{u},
\frac{\partial^\alpha\tilde{ r}}{1+\varepsilon r^\varepsilon},
,\partial^\alpha\tilde{\omega}\big)$,  we obtain
\begin{align}\label{G6.15}
&\frac12\frac{{\rm d}}{{\rm d}t}
\bigg(
\Big\|
\sqrt{\frac{P'(1+\varepsilon q^\varepsilon)}
     {1+\varepsilon q^\varepsilon}}
\partial^\alpha\tilde{ q}
\Big\|_{L^2}^2
+
\|\partial^\alpha\tilde{ u}\|_{L^2}^2
+
\Big\|
\frac{\partial^\alpha\tilde{ r}}
     {\sqrt{1+\varepsilon r^\varepsilon}}
\Big\|_{L^2}^2
+
\|\partial^\alpha\tilde{\omega}\|_{L^2}^2
\bigg)\nonumber\\
&\qquad+\mu\|\nabla\partial^\alpha\tilde{ u}\|_{L^2}^2
+(\mu+\lambda)\|\operatorname{div}\partial^\alpha\tilde{ u}\|_{L^2}^2
+\|\partial^\alpha(\tilde{ u}-\tilde{\omega})\|_{L^2}^2=\sum_{i=1}^8\tilde{J}_{i},
\end{align}
where
\begin{align}
\tilde{J}_1&=\frac12\int_{\mathbb R^3}
\partial_t\bigg(
\frac{P'(1+\varepsilon q^\varepsilon)}
     {1+\varepsilon q^\varepsilon}
\bigg)
|\partial^\alpha\tilde{q}|^2{\rm d}x
+
\frac12\int_{\mathbb R^3}
\partial_t\bigg(
\frac1{1+\varepsilon r^\varepsilon}
\bigg)
|\partial^\alpha\tilde{ r}|^2{\rm d}x,\nonumber\\
\tilde{J}_2&=-
\int_{\mathbb R^3}
\frac{P'(1+\varepsilon q^\varepsilon)}
     {1+\varepsilon q^\varepsilon}
\Big(
[\partial^\alpha,u^\varepsilon\cdot\nabla]\tilde{q}
+
[\partial^\alpha,q^\varepsilon\operatorname{div}]\tilde{ u}
\Big)
\partial^\alpha\tilde{ q}+[\partial^\alpha,u^\varepsilon\cdot \nabla]\tilde{u}\cdot \partial^\alpha\tilde{u}{\rm d}x
\nonumber\\
&\quad-
\int_{\mathbb R^3}
\frac{\partial^\alpha\tilde{r}}{1+\varepsilon r^\varepsilon}
\Big(
[\partial^\alpha,\omega^\varepsilon\cdot\nabla]\tilde{ r}
+
[\partial^\alpha,r^\varepsilon\operatorname{div}]\tilde{\omega}
\Big)+[\partial^\alpha,\omega^\varepsilon\cdot \nabla]\tilde{\omega}\cdot \partial^\alpha\tilde{\omega}{\rm d}x,\nonumber\\
\tilde{J}_3&=\frac12\int_{\mathbb R^3}
\operatorname{div}\bigg(
\frac{P'(1+\varepsilon q^\varepsilon)}
     {(1+\varepsilon q^\varepsilon)^2}
u^\varepsilon
\bigg)
|\partial^\alpha\tilde{ q}|^2{\rm d}x
+
\frac12\int_{\mathbb R^3}
\operatorname{div}\bigg(
\frac{\omega^\varepsilon}
     {(1+\varepsilon r^\varepsilon)^2}
\bigg)
|\partial^\alpha\tilde{ r}|^2{\rm d}x\nonumber\\
&\quad+\frac12\int_{\mathbb R^3}
|\partial^\alpha\tilde{ u}|^2
\operatorname{div}u^\varepsilon{\rm d}x+\frac12\int_{\mathbb R^3}
|\partial^\alpha\tilde{\omega}|^2
\operatorname{div}\omega^\varepsilon{\rm d}x,\nonumber\\
\tilde{J}_4&=\frac1\varepsilon\int_{\mathbb R^3}
\nabla\bigg(
\frac{P'(1+\varepsilon q^\varepsilon)}
     {1+\varepsilon q^\varepsilon}
\bigg)\partial^\alpha\tilde{ u}
\partial^\alpha\tilde{ q}
{\rm d}x+\frac1\varepsilon\int_{\mathbb R^3}
\nabla\bigg(
\frac1{1+\varepsilon r^\varepsilon}
\bigg)
\partial^\alpha\tilde{ r}
\partial^\alpha\tilde{\omega}{\rm d}x\nonumber\\
&\quad -
\frac{1}{\varepsilon}\int_{\mathbb R^3}
\Big[
\partial^\alpha,
\frac{P'(1+\varepsilon q^\varepsilon)}
     {1+\varepsilon q^\varepsilon}
\Big]\nabla\tilde{ q}
\cdot\partial^\alpha\tilde{ u}{\rm d}x-\frac{1}{\varepsilon}\int_{\mathbb R^3}
\Big[
\partial^\alpha,
\frac1{1+\varepsilon r^\varepsilon}
\Big]\nabla\tilde{ r}
\cdot\partial^\alpha\tilde{\omega}{\rm d}x,\nonumber\\
\tilde{J}_5&=\int_{\mathbb R^3}
\frac{P'(1+\varepsilon q^\varepsilon)}
     {(1+\varepsilon q^\varepsilon)^2}
\partial^\alpha\tilde{ F}_1
\partial^\alpha\tilde{ q}{\rm d}x,~\tilde{J}_6=\int_{\mathbb R^3}
\partial^\alpha\tilde{ F}_2\cdot
\partial^\alpha\tilde{ u}{\rm d}x,\nonumber\\
\tilde{J}_7&=\int_{\mathbb R^3}
\frac1{(1+\varepsilon r^\varepsilon)^2}
\partial^\alpha\tilde{ F}_3
\partial^\alpha\tilde{ r}{\rm d}x,~\tilde{J}_8=\int_{\mathbb R^3}
\partial^\alpha\tilde{ F}_4\cdot
\partial^\alpha\tilde{\omega}{\rm d}x.\nonumber
\end{align}

We now estimate the right-hand side of \eqref{G6.15}.  Adapting the same approach as in Lemma \ref{L3.2}, we   get the following estimates of $\tilde{J}_1,\dots,\tilde{J}_4$:
\begin{align}\label{j1-4}
\sum_{i=1}^4\int_0^t\tilde{J}_i{\rm d}\tau\lesssim\delta_0\tilde{\mathcal X}(t).
\end{align}
For the terms \(\tilde{ J}_{5}\) and
\(\tilde{ J}_{7} \), we use the pressure estimates in Proposition \ref{Ppressure} to obtain
\begin{align}\label{G6.19}
&\int_0^t
 (|\tilde{J}_{5} |+|\tilde{ J}_{7}| ){\rm d}\tau
\nonumber\\
\lesssim&\,
\varepsilon
\int_0^t
\left(\|\nabla(D_t^u\pi_1,D_t^\omega\pi_2)\|_{H^1}+\|(\tilde{ u},\tilde{\omega})\|_{H^2}
\|\nabla(\pi_1,\pi_2)\|_{H^2}\right)
\|\nabla(\tilde{ q},\tilde{ r})\|_{H^1}{\rm d}\tau
\nonumber\\
\lesssim&\,
\varepsilon
\left(\|\nabla(D_t^u\pi_1,D_t^\omega\pi_2)\|_{L_t^2(H^1)}+\|(\tilde{u},\tilde{\omega})\|_{L_t^\infty(H^2)}
\|\nabla(\pi_1,\pi_2)\|_{L_t^2(H^2)}\right)
\|\nabla(\tilde{ q},\tilde{ r})\|_{L_t^2(H^1)}
\nonumber\\
\lesssim&\,
\varepsilon^2
+
\delta_1^2\tilde{\mathcal X}(t).
\end{align}

We now turn to estimate \(\tilde{ J}_{6}\) and
\(\tilde{ J}_{8} \). The convection terms can be bounded as follows:
\begin{align}\label{G6.20}
&\int_0^t
\Big|
\int_{\mathbb R^3}
\partial^\alpha(\tilde{ u}\cdot\nabla u)\cdot
\partial^\alpha\tilde{ u}{\rm d}x
\Big|{\rm d}\tau
+
\int_0^t
\Big|
\int_{\mathbb R^3}
\partial^\alpha(\tilde{\omega}\cdot\nabla\omega)\cdot
\partial^\alpha\tilde{\omega}{\rm d}x
\Big|{\rm d}\tau
\nonumber\\
 \lesssim&\,
\int_0^t
\|\nabla(u,\omega)\|_{H^2}
\|(\tilde{ u},\tilde{\omega})\|_{H^2}
\|\nabla(\tilde{ u},\tilde{\omega})\|_{H^1}{\rm d}\tau
\nonumber\\
 \lesssim&\,
\|\nabla(u,\omega)\|_{L_t^2(H^2)}
\|(\tilde{u},\tilde{\omega})\|_{L_t^\infty(H^2)}
\|\nabla(\tilde{ u},\tilde{\omega})\|_{L_t^2(H^1)}
\lesssim
\delta_1\tilde{\mathcal X}(t).
\end{align}
Using Lemma \ref{L2.1}, we   obtain
\begin{align}\label{G6.21}
&\int_0^t
\Big|
\int_{\mathbb R^3}
\partial^\alpha\bigg(
\Big(
[P'(1)]^{-1}
\frac{P'(1+\varepsilon q^\varepsilon)}
     {1+\varepsilon q^\varepsilon}
-1
\Big)\nabla\pi_1
\bigg)\cdot\partial^\alpha\tilde{ u}{\rm d}x
\Big|{\rm d}\tau
\nonumber\\
&+
\int_0^t
\Big|
\int_{\mathbb R^3}
\partial^\alpha\Big(
\frac{\varepsilon r^\varepsilon}
     {1+\varepsilon r^\varepsilon}
\nabla\pi_2
\Big)\cdot\partial^\alpha\tilde{\omega}{\rm d}x
\Big|{\rm d}\tau
\nonumber\\
\lesssim&\,
\varepsilon
\int_0^t
\|(q^\varepsilon,r^\varepsilon)\|_{H^2}
\|\nabla(\pi_1,\pi_2)\|_{H^2}
\|\nabla(\tilde{u},\tilde{\omega})\|_{H^1}{\rm d}\tau
\nonumber\\
\lesssim&\,
\varepsilon
\|(q^\varepsilon,r^\varepsilon)\|_{L_t^\infty(H^2)}
\|\nabla(\pi_1,\pi_2)\|_{L_t^2(H^2)}
\|\nabla(\tilde{ u},\tilde{\omega})\|_{L_t^2(H^1)}
\nonumber\\
\lesssim&\,
 \varepsilon^2
+
 \delta_1\tilde{\mathcal X}(t).
\end{align}
For the terms involving viscosity, we integrate by parts to obtain
\begin{align}\label{G6.22}
 &\int_0^t
\Big|
\int_{\mathbb R^3}
\partial^\alpha\Big(
\frac{\mu\varepsilon q^\varepsilon}
     {1+\varepsilon q^\varepsilon}
\Delta u^\varepsilon
+
\frac{(\mu+\lambda)\varepsilon q^\varepsilon}
     {1+\varepsilon q^\varepsilon}
\nabla\operatorname{div}u^\varepsilon
\Big)\cdot
\partial^\alpha\tilde{ u}{\rm d}x
\Big|{\rm d}\tau\nonumber\\
\lesssim&\,
\varepsilon
\int_0^t
\|q^\varepsilon\|_{H^3}
\|\nabla u^\varepsilon\|_{H^3}\bigg(\sum_{\alpha=1,2}
\|\nabla\partial^\alpha\tilde{ u}\|_{L^2}\bigg){\rm d}\tau
\nonumber\\
 \lesssim& \,
\varepsilon
\|q^\varepsilon\|_{L_t^\infty(H^3)}
\|\nabla u^\varepsilon\|_{L_t^2(H^3)}\bigg(\sum_{\alpha=1,2}
\|\nabla\partial^\alpha\tilde{ u}\|_{L^2_t(L^2)}\bigg)
\nonumber\\
 \lesssim&\,
\varepsilon^2+\delta_0\tilde{\mathcal X}(t).
\end{align}
For the drag terms, we use Lemma \ref{L2.1} again to obtain
\begin{align}\label{G6.23}
& \varepsilon\int_0^t
\Big|
\int_{\mathbb R^3}
\partial^\alpha\Big(
\frac{r^\varepsilon-q^\varepsilon}
     {1+\varepsilon q^\varepsilon}
(\omega-u+\tilde{\omega}-\tilde{u})
\Big)\cdot
\partial^\alpha\tilde{ u}{\rm d}x
\Big|{\rm d}\tau\nonumber\\
\lesssim &\,
\varepsilon
\int_0^t
\|(q^\varepsilon,r^\varepsilon)\|_{H^2}
\left(\|\omega-u\|_{H^2}+\|\tilde{\omega}-\tilde{u}\|_{H^2}\right)
\|\nabla\tilde{ u}\|_{H^1}{\rm d}\tau
\nonumber\\
 \lesssim&\,
\varepsilon
\|(q^\varepsilon,r^\varepsilon)\|_{L_t^\infty(H^2)}
\left(\|\omega-u\|_{L_t^2(H^2)}+\|\tilde{\omega}-\tilde{u}\|_{L^2_t(H^2)}\right)
\|\nabla\tilde{ u}\|_{L_t^2(H^1)}
\nonumber\\
 \lesssim&\,
 \varepsilon^2
+
 \Big(\delta_0+\delta_1\Big)
\tilde{\mathcal X}(t),
\end{align}
Collecting the estimates  \eqref{G6.20}--\eqref{G6.23} together, we deduce that
\begin{align}\label{G6.25}
\int_0^t
 (
|\tilde{ J}_{6} |+
|\tilde{ J}_{8} |
 ){\rm d}\tau
\lesssim&\,
 \varepsilon^2
+
 \Big(\delta_0+\delta_1\Big)
\tilde{\mathcal X}(t).
\end{align}

Substituting the bounds \eqref{j1-4}, \eqref{G6.19} and \eqref{G6.25} into \eqref{G6.15}, we obtain \eqref{G6.14}. The proof of Lemma \ref{L6.2} is complete.
\end{proof}

Finally, we recover the dissipation of \(\tilde{ q}\) and \(\tilde{ r}\).

\begin{lem}\label{L6.3}
It holds that
\begin{align}\label{G6.27}
\int_0^t
\|\nabla(\tilde{ q},\tilde{ r})(\tau)\|_{H^1}^2{\rm d}\tau
\lesssim
\tilde{\mathcal X}(0)
+
\varepsilon^2
+
\big(\delta_0+\delta_1\big)
\tilde{\mathcal X}(t).
\end{align}
\end{lem}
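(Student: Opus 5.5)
We adapt the interactive acoustic estimate of Lemma \ref{L3.3} to the error system \eqref{G6.1}, now with one derivative less. Fix $|\alpha|\leq1$. Apply $\partial^\alpha$ to \eqref{G6.1}$_2$, multiply by $\varepsilon$, and take the $L^2$ inner product with $\nabla\partial^\alpha\tilde q$. Writing $\frac{P'(1+\varepsilon q^\varepsilon)}{1+\varepsilon q^\varepsilon}=P'(1)+O(\varepsilon|q^\varepsilon|)$, this isolates
\begin{align*}
P'(1)\|\nabla\partial^\alpha\tilde q\|_{L^2}^2
=-\varepsilon\int_{\mathbb R^3}\partial^\alpha\partial_t\tilde u\cdot\nabla\partial^\alpha\tilde q\,{\rm d}x+\cdots .
\end{align*}
The remaining terms on the right are the convection term $\varepsilon\,\partial^\alpha(u^\varepsilon\cdot\nabla\tilde u)$, the viscous terms $\varepsilon\,\partial^\alpha(\mu\Delta\tilde u+(\mu+\lambda)\nabla{\rm div}\,\tilde u)$, the drag term $\varepsilon\,\partial^\alpha(\tilde\omega-\tilde u)$, the commutator $\big[\partial^\alpha,\frac{P'(1+\varepsilon q^\varepsilon)}{1+\varepsilon q^\varepsilon}-P'(1)\big]\nabla\tilde q$ (note: without the factor $\varepsilon$, since the $\varepsilon^{-1}$ cancels), and $\varepsilon\,\partial^\alpha\tilde F_2$.

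For the time-derivative term we move $\partial_t$ onto $\tilde q$:
\begin{align*}
-\varepsilon\int\partial^\alpha\partial_t\tilde u\cdot\nabla\partial^\alpha\tilde q\,{\rm d}x=-\varepsilon\frac{{\rm d}}{{\rm d}t}\int\partial^\alpha\tilde u\cdot\nabla\partial^\alpha\tilde q\,{\rm d}x-\varepsilon\int{\rm div}\,\partial^\alpha\tilde u\,\partial^\alpha\partial_t\tilde q\,{\rm d}x .
\end{align*}
We then substitute $\varepsilon\partial_t\tilde q=-\varepsilon u^\varepsilon\cdot\nabla\tilde q-(1+\varepsilon q^\varepsilon){\rm div}\,\tilde u+\varepsilon\tilde F_1$ from \eqref{G6.1}$_1$. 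The singular $\varepsilon^{-1}$ is absorbed, and the result is controlled by
\[
C\|\nabla\tilde u\|_{H^1}^2+C\delta_0\|\nabla\tilde q\|_{H^1}\|\nabla\tilde u\|_{H^1}+\varepsilon\|\partial^\alpha\tilde F_1\|_{L^2}\|\nabla\tilde u\|_{H^1}.
\]
After time integration the total derivative gives boundary terms
\[
\varepsilon\,\big|\textstyle\int\partial^\alpha\tilde u\cdot\nabla\partial^\alpha\tilde q\,{\rm d}x\big|\lesssim\|(\tilde q,\tilde u)\|_{H^2}^2 .
\]
At time $t$ these are bounded by the $\sup$ part of $\tilde{\mathcal X}$. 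We then use Lemmas \ref{L6.1}--\ref{L6.2} to replace them by $\tilde{\mathcal X}(0)+\varepsilon^2+(\delta_0+\delta_1)\tilde{\mathcal X}(t)$. This is legitimate because those lemmas already control $\sup\|(\tilde q,\tilde u,\tilde r,\tilde\omega)\|_{H^2}^2$ by that quantity.

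The other terms are handled by Young's inequality, absorbing $\frac14P'(1)\|\nabla\partial^\alpha\tilde q\|_{L^2}^2$. The viscous and drag contributions give $\int_0^t(\|\nabla\tilde u\|_{H^2}^2+\|\tilde u-\tilde\omega\|_{H^1}^2)$, which Lemmas \ref{L6.1}--\ref{L6.2} bound by the desired right-hand side. The commutator is bounded by $\|\nabla q^\varepsilon\|_{H^2}\|\nabla\tilde q\|_{H^1}^2\lesssim\delta_0\|\nabla\tilde q\|_{H^1}^2$, and convection gives $\delta_0\int\|\nabla\tilde u\|_{H^1}^2$.

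The source terms are treated as in \eqref{G6.19}--\eqref{G6.23}. The key point is that every piece of $\varepsilon\tilde F_1$ and $\varepsilon\tilde F_2$ carries an extra factor of $\varepsilon$ or a factor of the small solution. For $\tilde F_1$ we use $\|\nabla D_t^u\pi_1\|_{L^2_t(H^1)}\lesssim\delta_1$ from \eqref{TGGG2}. Since $\partial^\alpha\tilde F_1$ with $|\alpha|\leq1$ may include the undifferentiated $D_t^u\pi_1$ when $\alpha=0$, we pair that term with $\varepsilon\,{\rm div}\,\tilde u$ and use the $\dot H^{-1}$ bound \eqref{TGGG3} after integrating by parts, as in \eqref{G6.7}. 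In $\varepsilon\tilde F_2$, the pressure term
\[
\Big([P'(1)]^{-1}\frac{P'(1+\varepsilon q^\varepsilon)}{1+\varepsilon q^\varepsilon}-1\Big)\nabla\pi_1=O\big(\varepsilon|q^\varepsilon||\nabla\pi_1|\big)
\]
and $\varepsilon q^\varepsilon\nabla^2u^\varepsilon$ give $\varepsilon^2$ contributions via \eqref{TG2} and \eqref{TGGG2}. The terms $\tilde u\cdot\nabla u$ and $\varepsilon(r^\varepsilon-q^\varepsilon)(\omega-u)$ give $\delta_1\tilde{\mathcal X}+\varepsilon^2$.

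The same argument with \eqref{G6.1}$_4$ paired against $\varepsilon\nabla\partial^\alpha\tilde r$, and \eqref{G6.1}$_3$ used for $\partial_t\tilde r$, yields the analogous bound for $\tilde r$. The only new feature is that the damping enters as $\varepsilon\,\partial^\alpha(\tilde u-\tilde\omega)$, and the $\varepsilon\,{\rm div}\,\partial^\alpha\tilde\omega\,\partial^\alpha\partial_t\tilde r$ term produces $\|\nabla\tilde\omega\|_{H^1}^2$. We bound the latter by $\|\tilde u-\tilde\omega\|_{H^2}^2+\|\nabla\tilde u\|_{H^1}^2$, as noted after \eqref{G6.2}, and then again by Lemmas \ref{L6.1}--\ref{L6.2}.

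The main obstacle I expect is precisely this: there is no viscosity for $\tilde\omega$, so the $\omega$-phase density dissipation has to be fed through the $\tilde u-\tilde\omega$ and $\nabla\tilde u$ dissipation already obtained. A second difficulty is keeping the $D_t\pi$ source terms at order $\varepsilon^2$ rather than $\varepsilon$, which forces the use of the $\dot H^{-1}$ estimate at the lowest order. Summing over $|\alpha|\leq1$ and integrating in time gives \eqref{G6.27}.
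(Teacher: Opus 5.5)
Your proposal is correct and follows the paper's proof essentially step for step. You pair $\partial^\alpha$ of the error momentum equations ($|\alpha|\le1$) with $\varepsilon\nabla\partial^\alpha\tilde q$ and $\varepsilon\nabla\partial^\alpha\tilde r$, move the time derivative onto $\tilde q,\tilde r$ and use the error mass equations, absorb the rest by Young's inequality, and close the boundary and dissipation terms with Lemmas \ref{L6.1}--\ref{L6.2}, Theorems \ref{Th1}--\ref{Th2} and Proposition \ref{Ppressure}. Your use of the $\dot H^{-1}$ bound \eqref{TGGG3} for the undifferentiated $D_t^u\pi_1$ term at $\alpha=0$ is a useful extra detail, since the paper's \eqref{G6.30} only records $\|\nabla D_t^u\pi_1\|_{H^1}$ there.
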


\begin{proof}
We first estimate the dissipation of \(\tilde{ q}\).
Applying \(\partial^\alpha\) with $\alpha=0$ or $1$ to \eqref{G6.1}$_2$, and taking inner product of the resultant equation with 
\(\varepsilon\nabla\partial^\alpha\tilde{ q}\), we obtain
\begin{align}\label{G6.28}
&\int_{\mathbb R^3}
\frac{P'(1+\varepsilon q^\varepsilon)}
     {1+\varepsilon q^\varepsilon}
|\nabla\partial^\alpha\tilde{ q}|^2{\rm d}x
\nonumber\\
=&\,
-\varepsilon
\int_{\mathbb R^3}
\nabla\partial^\alpha\tilde{ q}\cdot
\partial^\alpha\partial_t\tilde{ u}{\rm d}x
-\varepsilon
\int_{\mathbb R^3}
\nabla\partial^\alpha\tilde{ q}\cdot
\partial^\alpha\big(u^\varepsilon\cdot\nabla\tilde{ u}\big){\rm d}x+
\varepsilon\mu
\int_{\mathbb R^3}
\nabla\partial^\alpha\tilde{ q}\cdot
\Delta\partial^\alpha\tilde{ u}{\rm d}x
\nonumber\\
&
+
\varepsilon(\mu+\lambda)
\int_{\mathbb R^3}
\nabla\partial^\alpha\tilde{ q}\cdot
\nabla\operatorname{div}\partial^\alpha\tilde{ u}{\rm d}x+
\varepsilon
\int_{\mathbb R^3}
\nabla\partial^\alpha\tilde{ q}\cdot
\partial^\alpha(\tilde{\omega}-\tilde{ u}){\rm d}x
\nonumber\\
&
-\int_{\mathbb R^3}
\nabla\partial^\alpha\tilde{ q}\cdot
\Big[
\partial^\alpha,
\frac{P'(1+\varepsilon q^\varepsilon)}
     {1+\varepsilon q^\varepsilon}
\Big]\nabla\tilde{ q}{\rm d}x+
\varepsilon
\int_{\mathbb R^3}
\nabla\partial^\alpha\tilde{ q}\cdot
\partial^\alpha\tilde{ F}_2{\rm d}x
\nonumber\\
\equiv:&\,
\sum_{j=1}^{7}\tilde{ K}_j .
\end{align}
We first deal with  \(\tilde{ K}_1\). Integrating by parts and using
\eqref{G6.1}$_1$, we have
\begin{align*} 
\tilde{ K}_1
=&\,
-\varepsilon\frac{{\rm d}}{{\rm d}t}
\int_{\mathbb R^3}
\nabla\partial^\alpha\tilde{ q}\cdot
\partial^\alpha\tilde{ u}{\rm d}x
+
\varepsilon
\int_{\mathbb R^3}
\nabla\partial^\alpha\partial_t\tilde{ q}\cdot
\partial^\alpha\tilde{ u}{\rm d}x
\nonumber\\
=&\,
-\varepsilon\frac{{\rm d}}{{\rm d}t}
\int_{\mathbb R^3}
\nabla\partial^\alpha\tilde{ q}\cdot
\partial^\alpha\tilde{ u}{\rm d}x
-
\varepsilon
\int_{\mathbb R^3}
\partial^\alpha\partial_t\tilde{ q}\,
\operatorname{div}\partial^\alpha\tilde{ u}{\rm d}x
\nonumber\\
=&\,
-\varepsilon\frac{{\rm d}}{{\rm d}t}
\int_{\mathbb R^3}
\nabla\partial^\alpha\tilde{ q}\cdot
\partial^\alpha\tilde{ u}{\rm d}x
+
\int_{\mathbb R^3}
\partial^\alpha\big((1+\varepsilon q^\varepsilon)\operatorname{div}\tilde{ u}\big)
\operatorname{div}\partial^\alpha\tilde{ u}{\rm d}x
\nonumber\\
&\,+
\varepsilon
\int_{\mathbb R^3}
\partial^\alpha\big(u^\varepsilon\cdot\nabla\tilde{ q}\big)
\operatorname{div}\partial^\alpha\tilde{ u}{\rm d}x
-
\varepsilon
\int_{\mathbb R^3}
\partial^\alpha\tilde{ F}_1
\operatorname{div}\partial^\alpha\tilde{ u}{\rm d}x .
\end{align*}
Using Lemma \ref{L2.1}, we obtain
\begin{align}\label{G6.30}
\tilde{ K}_1
\leq&
-\varepsilon\frac{{\rm d}}{{\rm d}t}
\int_{\mathbb R^3}
\nabla\partial^\alpha\tilde{ q}\cdot
\partial^\alpha\tilde{ u}{\rm d}x
+
C\|\nabla\tilde{ u}\|_{H^1}^2
+
C\delta_0\|\nabla\tilde{ q}\|_{H^1}^2
\nonumber\\
&+
C\varepsilon^2
\|\nabla D_t^u\pi_1\|_{H^1}^2
+
C\varepsilon^2
\|\nabla\pi_1\|_{H^2}^2
\|\tilde{ u}\|_{H^2}^2 .
\end{align}
For the terms \(\tilde{K}_2, \dots,\tilde{ K}_6\),  we use Lemmas \ref{L2.1} and
\ref{L2.4} to deduce that
\begin{align}\label{G6.31}
\sum_{j=2}^{6}\tilde{ K}_j
\leq 
\frac{1}{3}\|\nabla\partial^\alpha\tilde{ q}\|_{L^2}^2
+
C\|\nabla\tilde{ u}\|_{H^2}^2
+
C\|\tilde{ u}-\tilde{\omega}\|_{H^1}^2
+
C\delta_0\|\nabla\tilde{q}\|_{H^1}^2 .
\end{align}
For  \(\tilde{ K}_7\), by Young's inequality
we have
\begin{align}\label{G6.32}
|\tilde{ K}_7|
\leq\,&
\frac{1}{3}\|\nabla\partial^\alpha\tilde{ q}\|_{L^2}^2
+
C\|\nabla\tilde{ u}\|_{H^1}^2
+
C\|\tilde{ u}-\tilde{\omega}\|_{H^1}^2\nonumber\\
&+
C\big(\delta_0+\delta_1\big)
\big(
\|\nabla\tilde{ u}\|_{H^1}^2
+
\|\tilde{ u}-\tilde{\omega}\|_{H^1}^2
\big)
\nonumber\\
&+
C\varepsilon^2
\big(
\|\nabla(\pi_1,\pi_2)\|_{H^2}^2
+
\|\nabla u^\varepsilon\|_{H^3}^2
+
\|\omega-u\|_{H^2}^2
\big).
\end{align}
Then, putting \eqref{G6.30}--\eqref{G6.32} into \eqref{G6.28} and summing over \(|\alpha|\leq1\), we obtain 
\begin{align}\label{G6.33}
&\quad \varepsilon\frac{{\rm d}}{{\rm d}t}
\sum_{|\alpha|\leq1}
\int_{\mathbb R^3}
\nabla\partial^\alpha\tilde{ q}\cdot
\partial^\alpha\tilde{u}{\rm d}x
+
\widetilde\eta_1\|\nabla\tilde{ q}\|_{H^1}^2
\nonumber\\
&\qquad \lesssim
\|\nabla\tilde{ u}\|_{H^2}^2
+
\|\tilde{ u}-\tilde{\omega}\|_{H^1}^2
+
C\delta_0\|\nabla\tilde{ q}\|_{H^1}^2+
C\big(\delta_0+\delta_1\big)
\big(
\|\nabla\tilde{ u}\|_{H^1}^2
+
\|\tilde{ u}-\tilde{\omega}\|_{H^1}^2
\big)
\nonumber\\
&\qquad \quad +
C\varepsilon^2
\big(
\|\nabla D_t^u\pi_1\|_{H^1}^2
+
\|\nabla(\pi_1,\pi_2)\|_{H^2}^2
+
\|\nabla u^\varepsilon\|_{H^3}^2
+
\|\omega-u\|_{H^2}^2
\big),
\end{align}
for some constant $\widetilde \eta_1>0$. Similarly, we   obtain the following estimates on $\tilde{r}$:
\begin{align}\label{G6.38}
&\varepsilon\frac{{\rm d}}{{\rm d}t}
\sum_{|\alpha|\leq1}
\int_{\mathbb R^3}
\nabla\partial^\alpha\tilde{ r}\cdot
\partial^\alpha\tilde{\omega}{\rm d}x
+
\widetilde{\eta_2}\|\nabla\tilde{ r}\|_{H^1}^2
\nonumber\\
&\quad \lesssim
\|\nabla\tilde{ u}\|_{H^2}^2
+
\|\tilde{ u}-\tilde{\omega}\|_{H^1}^2
+
C\delta_0|\nabla\tilde{r}\|_{H^1}^2
+
C\delta_1
\big(
\|\nabla\tilde{ u}\|_{H^1}^2
+
\|\tilde{u}-\tilde{\omega}\|_{H^2}^2
\big)\nonumber\\
&\qquad 
+
C\varepsilon^2
\big(
\|\nabla D_t^\omega\pi_2\|_{H^1}^2
+
\|\nabla\pi_2\|_{H^2}^2
\big),
\end{align}
for some constant $\widetilde \eta_{2}>0$.

Thus, combining \eqref{G6.33} and \eqref{G6.38},   and integrating over \([0,t]\), we obtain
\begin{align}\label{G6.40}
&\int_0^t
\|\nabla(\tilde{ q},\tilde{ r})(\tau)\|_{H^1}^2{\rm d}\tau\nonumber\\
&\quad \lesssim
\varepsilon
\sup_{0\leq \tau\leq t}
\|(\tilde{q},\tilde{ u},\tilde{r},\tilde{\omega})(\tau)\|_{H^2}^2
+
\int_0^t
\big(
\|\nabla\tilde{u}(\tau)\|_{H^2}^2
+
\|\tilde{ u}-\tilde{\omega}\|_{H^2}^2
\big){\rm d}\tau+
C\big(\delta_0+\delta_1\big)
\tilde{\mathcal X}(t)
\nonumber\\
&\qquad +
C\varepsilon^2
\int_0^t
\big(
\|\nabla(D_t^u\pi_1,D_t^\omega\pi_2)\|_{H^1}^2
+
\|\nabla(\pi_1,\pi_2)\|_{H^2}^2
+
\|\nabla u^\varepsilon\|_{H^3}^2
+
\|\omega-u\|_{H^2}^2
\big){\rm d}\tau .
\end{align}
By Theorems \ref{Th1} and \ref{Th2}, Proposition \ref{Ppressure}, \eqref{G6.3} and \eqref{G6.14}, we obtain
\begin{align}
&\sup_{0\leq \tau\leq t}
\|(\tilde{q},\tilde{ u},\tilde{r},\tilde{\omega})(\tau)\|_{H^2}^2
+
\int_0^t
\big(
\|\nabla\tilde{u}(\tau)\|_{H^2}^2
+
\|\tilde{ u}-\tilde{\omega}\|_{H^2}^2
\big){\rm d}\tau\nonumber\\
&\qquad\lesssim C\tilde{\mathcal X}(0)
+
C\varepsilon^2
+
C\big(\delta_0+\delta_1
\big)
\tilde{\mathcal X}(t),\label{G6.40-1}
\end{align}
and
\begin{align}
&
\int_0^t
\big(
\|\nabla(D_t^u\pi_1,D_t^\omega\pi_2)\|_{H^1}^2
+
\|\nabla(\pi_1,\pi_2)\|_{H^2}^2
+
\|\nabla u^\varepsilon\|_{H^3}^2
+
\|\omega-u\|_{H^2}^2
\big){\rm d}\tau
\lesssim
1.\label{G6.40-2}
\end{align}
Substituting \eqref{G6.40-1} and \eqref{G6.40-2} into \eqref{G6.40}, we obtain \eqref{G6.27}. The proof of Lemma \ref{L6.3} is complete.
\end{proof}

With Lemmas \ref{L6.1}--\ref{L6.3} in hand, we  now prove
Theorem \ref{Th3}.

\begin{proof}[Proof of Theorem \ref{Th3}]
Combining the estimates \eqref{G6.3},  
\eqref{G6.14}  and \eqref{G6.27}, we obtain
\begin{align}\label{G6.41}
\tilde{\mathcal X}(t)
\leq
C\tilde{\mathcal X}(0)
+
C\varepsilon^2
+
C\big(
\delta_0+\delta_1
\big)
\tilde{\mathcal X}(t),
\end{align}
for all \(t\geq0\), where \(C>0\) is a constant independent of \(\varepsilon\).
By the well-prepared initial condition \eqref{TD1}, the initial error variables satisfy
\begin{align*}
\tilde{\mathcal X}(0)
\leq 
C\|(\tilde{ q}_0,\tilde{ u}_0,\tilde{ r}_0,\tilde{\omega}_0)\|_{H^2}^2
\leq C\varepsilon^2.
\end{align*}
Now we choose  \(\delta_0>0\) and \(\delta_1>0\) sufficiently small such that
\begin{align*}
C\big(\delta_0+\delta_1
\big)
\leq
\frac12.
\end{align*}
The last term on the right-hand side of \eqref{G6.41} can be absorbed into the left-hand side. Thus, we obtain 
\begin{align}
\tilde{X}(t)
\leq
C\varepsilon^2,
\quad \text{for any}\quad
t\geq0.\nonumber
\end{align}
Therefore, \eqref{TD2} and \eqref{convergence1} follow. The proof of Theorem \ref{Th3} is complete.
\end{proof}

\bigskip 
\noindent\textbf{Acknowledgements.}
F. C. Li and J. K. Ni were supported by NSFC (Grant No. 12331007). F. C. Li was also supported by the ``333 Project" of Jiangsu Province. Z. P. Zhang  was supported by NSFC (Grant No. 12471215) and Taishan Scholars Program (tsqn202507101). Z. Zhang was supported by the General Research Fund (Project No. 15300225) from Hong Kong RGC.

\vspace{2mm}

\noindent\textbf{Conflict of interest.} The authors declare no conflicts of interest.

\vspace{2mm}

\noindent\textbf{Data availability statement.}
No dataset was generated or analyzed during the current study.

\bibliographystyle{plain}

\begin{thebibliography}{aaa}


%\bibitem{AF-Pa-2003} R. A. Adams, J. J. F. Fournier,
%{\it Sobolev Spaces}, second edition. Elsevier/Academic Press, Amsterdam, 2003.

\bibitem{AlazardARMA2006}
T. Alazard, Low Mach number limit of the full Navier--Stokes equations, {\it Arch. Ration. Mech. Anal.} {\bf 180 (1)} (2006)  1--73.

\bibitem{BBBDLLT-irma-2005} C. Baranger, G. Baudin, L. Boudin, 
B. Despr\'{e}s, F. Lagouti\`{e}re, E. Lapébie, T. Takahashi,
Liquid jet generation and break-up, 
Numerical methods for hyperbolic and kinetic problems, 
{\it IRMA Lect. Math. Theor. Phys.} {\bf 7} (2005) 149--176.

\bibitem{BBJM-esaim-2005} C. Baranger, L. Boudin, P. Jabin, S. Mancini,
A modeling of biospray for the upper airways, CEMRACS 2004 --- {\it Mathematics and Applications to Biology and Medicine}, 41--47, ESAIM Proc.,  14, EDP Sciences, Les Ulis, 2005.

\bibitem{BWC-zamm-2000} R. Bürger, W. Wendland, F. Concha, 
Model equations for gravitational 
sedimentation-consolidation processes, {\it ZAMM Z. Angew. Math. Mech.} {\bf 80} (2000) 79--92.


\bibitem{CCK-poincare-2016} J. A. Carrillo, Y.-P. Choi,  T. Karper,  
On the analysis of a coupled kinetic-fluid model with local alignment forces,
{\it Ann. Inst. H. Poincar\'{e} C Anal. Non Lin\'{e}aire} {\bf 33 (2)} (2016)  273--307.

\bibitem{CDM-KRM-2011}
J. A. Carrillo, R. Duan,  A. Moussa,  
Global classical solutions close to equilibrium to the Vlasov--Fokker--Planck--Euler system,
{\it Kinet. Relat. Models} {\bf 4 (1)} (2011)  227--258.

\bibitem{Choi-SIMA-2016} Y.-P. Choi,  
Global classical solutions and large-time behavior of the two-phase fluid model,
{\it SIAM J. Math. Anal.} {\bf 48 (5)} (2016) 3090--3122.

\bibitem{CJ-M3AS-2021} Y.-P. Choi, J. Jung,  
Asymptotic analysis for a Vlasov--Fokker--Planck/Navier--Stokes system in a bounded domain,
{\it Math. Models Methods Appl. Sci.} {\bf 31 (11)} (2021)   2213--2295.

\begin{comment}
\bibitem{Danchin-IM-00} R. Danchin,  
Global existence in critical spaces for compressible Navier--Stokes equations,
{\it Invent. Math.} {\bf 141 (3)} (2000)  579--614.
\end{comment}

\bibitem{Danchin-2002} R. Danchin, 
Zero Mach number limit in critical spaces for compressible Navier--Stokes equations,
{\it Ann. Sci. \'{E}cole Norm. Sup. (4)} {\bf 35 (1)} (2002) 27--75.

\bibitem{Dk-MZ-1992} K. Deckelnick, 
Decay estimates for the compressible Navier--Stokes equations in unbounded domains,
{\it Math. Z.}   
{\bf 209 (1)} (1992) 115--130.

 

\bibitem{Ebin-CPAM-1979}D. G. Ebin, 
The initial-boundary value problem for subsonic fluid motion,
{\it Comm. Pure Appl. Math.} {\bf 32 (1)} (1979)  1--19.

\begin{comment}
\bibitem{FNP-JMFM-2001} E. Feireisl,  A. Novotn\'{y}, H. Petzeltov\'{a}, 
On the existence of globally defined weak solutions to the Navier--Stokes equations,
{\it J. Math. Fluid Mech.} {\bf 3 (4)} (2001) 358--392.
\end{comment}

\bibitem{EWW-ARMA-2016} S. Evje,  W. Wang,  H. Wen,  
Global well-posedness and decay rates of strong solutions to a non-conservative compressible two-fluid model,
{\it Arch. Ration. Mech. Anal.} {\bf 221 (3)} (2016) 1285--1316.

\bibitem{EW-SIMA-2015} S. Evje,  H. Wen,  
Global solutions of a viscous gas-liquid model with unequal fluid velocities in a closed conduit,
{\it SIAM J. Math. Anal.} {\bf 47 (1)} (2015)  381--406.


\bibitem{HJ-NARWA-2025} H. Hong, K. Jong,  
Low Mach number limit for the compressible Euler--Navier--Stokes two-phase flow model in $\mathbb R^3$,
{\it Nonlinear Anal. Real World Appl.} {\bf 84} (2025) Paper No. 104267.

\begin{comment}
\bibitem{JLN-2025} P. Jiang, F. Li, J. Ni, Global existence and short/long time behavior of classical solutions to the incompressible inhomogeneous Navier--Stokes--Vlasov--Fokker--Planck system,   {\it J. Math. Phys},   {\bf   66 (12)}  (2025) Paper No. 121509.
\end{comment}

\bibitem {commutator1} T. Kato, G. Ponce,  Commutator estimates and the Euler and Navier–Stokes equations,  {\it Commun. Pure Appl. Math.} {\bf 41 (7)} (1988) 891--907.


\bibitem {commutator2} C. E. Kenig, G. Ponce, L. Vega, Well-posedness and scattering results for the generalized Korteweg--de Vries equation via the contraction principle, {\it J. Amer. Math. Soc.} {\bf 4 (2)} (1991) 323--347.

\bibitem{KM-CPAM-1981}
S. Klainerman, A. Majda, Singular limits of quasilinear hyperbolic systems with large parameters and the incompressible limit of compressible fluids, {\it Comm. Pure Appl. Math.} 
{\bf 34} (1981)  481--524.

\bibitem{KM-CPAM-1982}
S. Klainerman, A. Majda,
Compressible and incompressible fluids, {\it Comm. Pure Appl. Math.} {\bf 35}
(1982)  629--651.

\begin{comment}
\bibitem{LNZW-2026} F. Li, J. Ni, D.-H. Wang, Z. Zhang,  Low Mach number limit of the compressible Euler--Vlasov--Fokker--Planck system with well-prepared initial data in $\mathbb{R}^3$, preprint.

\end{comment}

\bibitem{LS-SIMA-2023} H.-L. Li,  L.-Y. Shou,  
Global existence and optimal time-decay rates of the compressible Navier--Stokes--Euler system,
{\it SIAM J. Math. Anal.} {\bf 55 (3)} (2023)  1810--1846.

\bibitem{LM-JMPA-1998} P.-L. Lions, N. Masmoudi,  
Incompressible limit for a viscous compressible fluid,
{\it J. Math. Pures Appl. (9)} {\bf 77 (6)} (1998) 585--627.


 \bibitem{Matsumura-Nishida-1979}
A. Matsumura, T. Nishida,
The initial value problem for the equations of motion of compressible viscous and heat-conductive fluids,
 {\it Proc. Japan Acad. Ser. A Math. Sci.} {\bf 55 (9)} (1979)  337--342.

\bibitem{Matsumura-Nishida-1980}
A. Matsumura, T. Nishida,
The initial value problem for the equations of motion of viscous and heat-conductive gases,
{\it J. Math. Kyoto Univ.}  {\bf 20 (1)} (1980) 67--104.

\bibitem{MV08} A. Mellet, A. Vasseur, 
Asymptotic analysis for a Vlasov--Fokker--Planck/compressible Navier--Stokes system of equations,
{\it Comm. Math. Phys.} {\bf 281 (3)} (2008) 573--596.

 \bibitem{MS-ARMA-2001}
G. M\'{e}tivier, S. Schochet, The incompressible limit of the non-isentropic Euler equations, {\it  Arch. Ration. Mech. Anal.}  {\bf 158} (2001) 61--90.

\bibitem{RM-1952} W. E. Ranz, W. R. Marshall, Evaporation from drops, part I, Chem. Eng. Prog. {\bf 48} (1952) 141--146.

\bibitem{RM-1952-a} W. E. Ranz, W. R. Marshall, Evaporation from drops, part II, Chem. Eng. Prog. {\bf 48}
(1952) 173--180.




\bibitem{Schochet-CMP-1986} S. Schochet,  
The compressible Euler equations in a bounded domain: existence of solutions and the incompressible limit,
{\it Comm. Math. Phys.} {\bf 104 (1)} (1986)  49--75.

\bibitem{Schochet-JDE-1994} S. Schochet, 
Fast singular limits of hyperbolic PDEs,
{\it J. Differential Equations} {\bf 114 (2)} (1994)  476--512.

\begin{comment}
\bibitem{SK-HMJ-1985} Y. Shizuta, S. Kawashima, 
Systems of equations of hyperbolic-parabolic type with applications to the discrete Boltzmann equation,
{\it Hokkaido Math. J.} {\bf 14 (2)} (1985) 249--275.
\end{comment}

\bibitem{Stein-1970} E. M. Stein, {\it Singular Integrals and Differentiability Properties of Functions,} Princeton, NJ: Princeton University Press, 1970.


 

\bibitem{TZ-JMAA-2021} H. Tang,  Y. Zhang,  
Large time behavior of solutions to a two phase fluid model in $\mathbb R^3$,
{\it J. Math. Anal. Appl.} {\bf 503 (2)} (2021)  Paper No. 125296.
 

\bibitem{Ukai-JMKU-1986}
S. Ukai, The incompressible limit and the initial layer of the compressible Euler equation. {\it J. Math. Kyoto Univ.} {\bf 26} (1986) 323--331.

\bibitem{Wfa-1958} F.-A. Williams, Spray combustion and atomization, {\it Phys. Fluids} {\bf 1} (1958) 541--555.

\bibitem{Wfa-1985} F.-A. Williams, {\it Combustion Theory}, Benjamin Cummings, 1985.

\bibitem{WZZ-SIMA-2020} G. Wu,  Y. Zhang,  L. Zhou,  
Optimal large-time behavior of the two-phase fluid model in the whole space,
{\it SIAM J. Math. Anal.} {\bf 52 (6)} (2020) 5748--5774.

\bibitem{WZT-M2AS-2023} Y. Wu,  Y. Zhang, H. Tang,  
Optimal decay rate of solutions to the two-phase flow model,
{\it Math. Methods Appl. Sci.} {\bf 46 (2)} (2023)  2538--2568.



\bibitem{YCZ-SIMA-2012} L. Yao,  C. Zhu,  R. Zi,  
Incompressible limit of viscous liquid-gas two-phase flow model,
{\it SIAM J. Math. Anal.} {\bf 44 (5)}(2012)  3324--3345.

\bibitem{ZWXM-ZAMP-2021} Y. Zhang,  J. Wang,  C. Xiao,  L. Ma,  
Global existence and time decay rates of the two-phase fluid system in $R^3$,
{\it Z. Angew. Math. Phys.} {\bf 72 (5)} (2021)  Paper No. 180.
 







\end{thebibliography}

\end{document}